\documentclass[a4 paper,11pt]{amsart}

\usepackage{amsmath} %
\usepackage{amssymb} %
\usepackage{amscd} %
\usepackage{amsthm} %
\usepackage{mathrsfs} %
\usepackage{euscript}
\usepackage{eufrak}
\usepackage{ulem}
\usepackage[alphabetic]{amsrefs}
\usepackage{ascmac} 
\usepackage{comment}
\usepackage{bm}
\usepackage[dvipdfmx]{graphicx} 
\usepackage{xcolor}
\usepackage{here}
\definecolor{unbleu}{rgb}{0.03, 0.15, 0.4}

\usepackage{hyperref}
\hypersetup{
pdfborder = {0 0 0},
colorlinks,
linkcolor=unbleu,
citecolor=unbleu,
urlcolor=unbleu
}

\usepackage{tikz}

\usepackage{fancyhdr}
 \newtheorem{theorem}{Theorem}[section]
 \newtheorem{lemma}[theorem]{Lemma}
 \newtheorem{proposition}[theorem]{Proposition}

 \newtheorem{question}[theorem]{Question}

\newtheorem{corollary}[theorem]{Corollary}

\theoremstyle{definition}
\newtheorem{definition}[theorem]{Definition}
\newtheorem{remark}[theorem]{Remark}
\newtheorem{example}[theorem]{Example}

\numberwithin{equation}{section}


\newcommand{\C}{\mathbb C}
\newcommand{\R}{\mathbb R}


%


\title[Braids and the planar Newtonian N-body problem]
{A study of braids arising from simple choreographies of the planar Newtonian N-body problem}

\author[Y. Kajihara]{%
Yuika Kajihara}
\address{%
Department of Mathematics, Kyoto University, Kitashirakawa Oiwake-cho, Sakyo-ku,
Kyoto, 606-8502, Japan
}
\email{%
kajihara.yuika.6f@kyoto-u.ac.jp
} 

\author[E. Kin]{%
    Eiko Kin
}
\address{%
      Center for Education in Liberal Arts and Sciences, The University of Osaka, Toyonaka, Osaka 560-0043, Japan
}
\email{%
        kin.eiko.celas@osaka-u.ac.jp
}

\author[M. Shibayama]{%
    Mitsuru Shibayama
}
\address{%
Department of Applied Mathematics and Physics, 
Graduate School of Informatics, Kyoto University, 
Yoshida-Honmachi, Sakyo-ku, Kyoto 606-8501, Japan
}
\email{%
        shibayama@amp.i.kyoto-u.ac.jp}

\subjclass[2020]{%
57K10, 57K20, 70F10}

\keywords{%
N-body problem, simple choreographies, 
periodic solutions, braid groups, braid types, pseudo-Anosov, stretch factor}

\date{
April 25, 2025
}

\thanks{%
Y. K. was supported by JSPS KAKENHI Grant Number JP23H01081 and JP23K19009.
\\
E. K. was supported by JSPS KAKENHI Grant Number 
JP21K03247, JP22H01125, JP23H01081. 
\\
M. S. was supported by JSPS KAKENHI Grant Number 
JP23H01081, 
JP23K25778
}

\begin{document}
\dedicatory{Dedicated to the memory of Prof. Masaya Yamaguti on the 100th anniversary of his birth}

\begin{abstract}
We study periodic solutions of the planar Newtonian $N$-body problem 
with equal masses.
Each periodic solution traces out a braid with $N$ strands 
in 3-dimensional space.
When the braid is of pseudo-Anosov type, it has an associated stretch factor greater than 1, which reflects the complexity of the corresponding periodic solution. 
For each $N \ge 3$, Guowei Yu established the existence of a family of simple choreographies to the planar Newtonian $N$-body problem.
We prove that  braids arising from Yu’s periodic solutions 
are  of pseudo-Anosov types,
except in the special case where all particles move along a circle.
We also identify the simple choreographies whose braid types have the largest and smallest stretch factors, respectively. 
\end{abstract}
\maketitle

\section{Introduction}
\label{section_introduction}

We consider a  periodic solution 
$$\bm{z}(t)= (z_0(t), \dots, z_{N-1}(t)),\ z_i(t) \in {\Bbb R}^2\ 
(i=0, \dots, N-1)$$
of the planar Newtonian $N$-body problem with equal masses. 
Let $T>0$ be the period of the solution $\bm{z}(t)$. 
We take time to be a third axis orthogonal to the plane. 
For each fixed  $t_0 \in {\Bbb R}$, 
the trajectory of $\bm{z}(t)$ from $t_0$ to $t_0+ T$ 
traces a pure braid 
$b(\bm{z}([t_0,t_0+T]))$ (see Section~\ref{subsection_particle-dances}).
The following question  is a starting point for our study.

\begin{figure}[htbp]
\begin{center}
\includegraphics[width=3.8in]{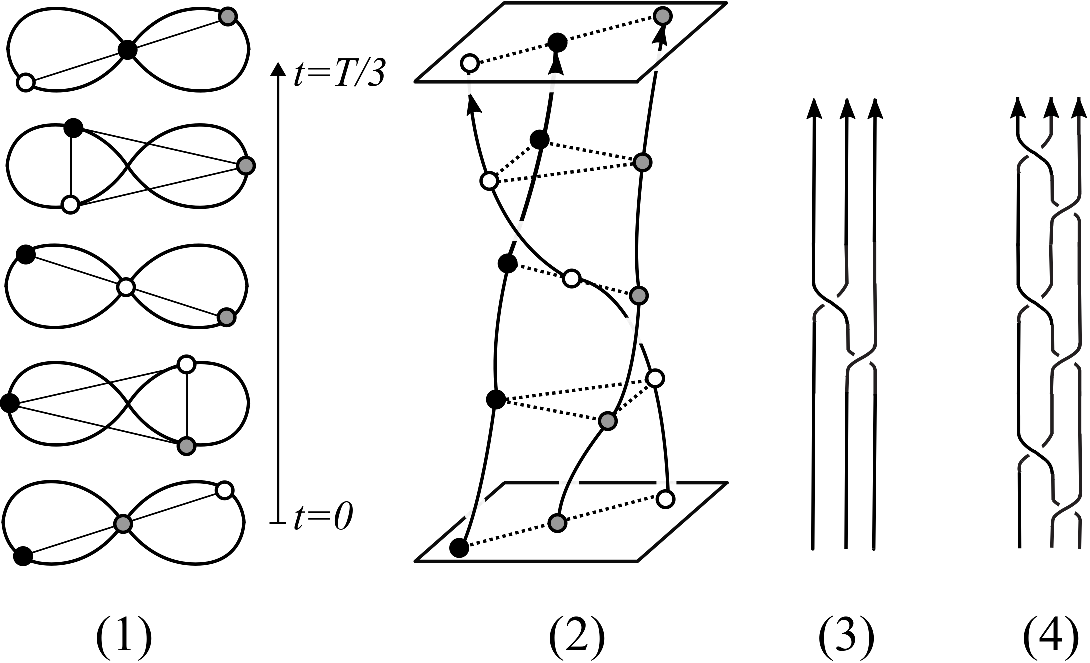}
\caption{(1) The figure-eight solution $\bm{z}(t)$ with period $T$. 
(2)(3) The primitive braid $b:= b(\bm{z}([0, \frac{T}{3}])) 
= \sigma_1^{-1} \sigma_2$. 
(4) The braid $b^3= (\sigma_1^{-1} \sigma_2)^3$ 
represents the braid type of the figure-eight.}
\label{fig_figure8choreography}
\end{center}
\end{figure}

\begin{question}[Montgomery \cite{Montgomery25} (cf. Moore \cite{Moore93})]
\label{question_Montgomery}
Is every pure braid type with $N$ strands realized by a 
periodic solution of the planar Newtonian $N$-body problem? 
\end{question}
See Definition~\ref{def_braid-type} for the notion of braid types.
Question~\ref{question_Montgomery} has been resolved for $N=3$, 
as shown by Moeckel-Montgomery \cite{MoeckelMontgomery15}, 
yet it remains wide open for $N \ge 4$.

A {\it simple choreography} of the planar $N$-body problem is a 
 periodic solution
 in which all $N$ particles chase each other along a single closed curve. 
We require that the phase shift between consecutive particles  be constant.
 If $\bm{z}(t)$ is a simple choreography with period $T$,  
then there exists a cyclic permutation 
$\sigma $ of $N$ elements $\{0, \dots, N-1\}$ 
such that 
$$z_i (t+ \tfrac{T}{N})= z_{\sigma(i)}(t) \hspace{2mm} 
\mbox{for}\ i \in \{ 0, \dots, N-1\} \ \mbox{and}\ t \in {\Bbb R}.$$
We call $\frac{T}{N}$ the {\it primitive period} of the 
simple choreography $\bm{z}(t)$. 
For any fixed $t_0 \in {\Bbb R}$, 
the trajectory of $\bm{z}(t)$ 
from $t_0$ to 
$t_0+ \frac{T}{N}$ determines a braid 
$b:= b(\bm{z}\big([t_0, t_0+ \frac{T}{N}]\big))$ 
that is called a {\it primitive braid} of $\bm{z}(t)$. 
We call its braid type 
the {\it primitive braid type} of $\bm{z}(t)$ (see Figure~\ref{fig_figure8choreography}).

Applying the Nielsen-Thurston classification of surface automorphisms 
\cite{Thurston88}, 
we classify braids  into three types: periodic, reducible, and pseudo-Anosov. For a braid of pseudo-Anosov type, there is an associated stretch factor greater than 1, which is a conjugacy invariant of the braid (see Section~\ref{subsection_Nielsen-Thurston}).  
We use stretch factors as a measure of the complexity of periodic solutions to the planar $N$-body problem.

For each integer $N \ge 3$, let us set 
$$\Omega_N= 
\{\bm{\omega}= (\omega_1, \dots, \omega_{N-1})\ | \ \omega_i \in \{1, -1\} \ \mbox{for}\  i \in \{1, \dots, N-1\}\}.$$ 
The main theorem of this paper is as follows. 

\begin{theorem}
\label{thm_main}
For each $N \ge 3$ and 
$\bm{\omega}= (\omega_1, \dots, \omega_{N-1}) \in \Omega_N$, 
there exists a simple choreography of the planar 
 Newtonian $N$-body problem 
 whose primitive braid type  is given by the braid 
 $\sigma_1^{\omega_1} \sigma_2^{\omega_2} \cdots \sigma_{N-1}^{\omega_{N-1}}$. 
In particular, the braid type of the simple choreography 
is given by 
$(\sigma_1^{\omega_1} \sigma_2^{\omega_2} \cdots \sigma_{N-1}^{\omega_{N-1}})^N$, 
and it is a pseudo-Anosov type if 
$\bm{\omega}$ is neither 
$(1, 1, \dots, 1)$ nor $(-1,-1, \dots, -1)$. 
Otherwise, the braid type of the simple choreography is periodic. 
\end{theorem}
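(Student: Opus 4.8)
The plan is to separate the topological input from the dynamical classification. Existence of the simple choreography realizing the primitive braid $\sigma_1^{\omega_1}\cdots\sigma_{N-1}^{\omega_{N-1}}$ is supplied by Yu's family of solutions, so the first task is purely topological: read off the primitive braid from Yu's construction and check that the sign vector $\bm{\omega}$ is exactly the record of the $N-1$ crossing signs encountered as the moving strand weaves past the others during one primitive period. Granting this, the passage to the full braid type is automatic. Because the phase-shift relation $z_i(t+\tfrac{T}{N}) = z_{\sigma(i)}(t)$ makes the motion over each primitive period a relabelled copy (by the $N$-cycle $\sigma$) of the motion over the first, concatenating the $N$ primitive periods exhibits the period-$T$ braid as $(\sigma_1^{\omega_1}\cdots\sigma_{N-1}^{\omega_{N-1}})^N$, which is pure since $\sigma$ has order $N$ (this is the relation $b^3$ in the figure-eight example). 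Writing $\beta_{\bm{\omega}}= \sigma_1^{\omega_1}\cdots\sigma_{N-1}^{\omega_{N-1}}$, it then suffices to determine the Nielsen--Thurston type of $\beta_{\bm{\omega}}^N$; since a mapping class and any nonzero power share the same trichotomy, I would classify $\beta_{\bm{\omega}}$ itself and transport the conclusion to $\beta_{\bm{\omega}}^N$.

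For the two homogeneous vectors the answer is explicit. Using the standard identity $(\sigma_1\sigma_2\cdots\sigma_{N-1})^N = \Delta^2$ for the full twist, $\bm{\omega}=(1,\dots,1)$ gives $\beta_{\bm{\omega}}^N = \Delta^2$, and applying the flip automorphism $\sigma_i \mapsto \sigma_{N-i}$ to the inverse identity gives $\beta_{(-1,\dots,-1)}^N = \Delta^{-2}$. As powers of the full twist these are central, hence periodic; geometrically this is the rigid rotation in which all particles traverse a common circle, matching the excluded case.

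The heart of the proof is the mixed-sign case, where I expect $\beta_{\bm{\omega}}$ to be pseudo-Anosov. Ruling out periodicity is quick: the exponent-sum homomorphism $e\colon B_N\to\Z$ is a conjugacy invariant with $e(\beta_{\bm{\omega}})=\sum_i\omega_i$, while every periodic braid is conjugate to a power of $\delta=\sigma_1\cdots\sigma_{N-1}$ (so $e\in(N-1)\Z$) or of $\gamma=\delta\,\sigma_1$ (so $e\in N\Z$). For a mixed $\bm{\omega}$ one has $\lvert\sum_i\omega_i\rvert\le N-3$, so the only multiple of $N-1$ or of $N$ in range is $0$, forcing conjugacy to the identity, which is contradicted by the fact that $\beta_{\bm{\omega}}$ induces the nontrivial $N$-cycle on the punctures. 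It therefore remains only to distinguish reducible from pseudo-Anosov, and I would settle this by exhibiting a pseudo-Anosov representative directly: build an invariant train track $\tau$ carried by the weaving of $\beta_{\bm{\omega}}$, check that $\beta_{\bm{\omega}}$ induces an efficient train-track map $f\colon\tau\to\tau$ in the sense of Bestvina--Handel, and compute its transition matrix $M_{\bm{\omega}}$ as an explicit sign-parametrized matrix. Showing $M_{\bm{\omega}}$ is Perron--Frobenius with spectral radius $\lambda(\bm{\omega})>1$ whenever $\bm{\omega}$ is non-homogeneous then certifies simultaneously that $\beta_{\bm{\omega}}$ is pseudo-Anosov and that $\lambda(\bm{\omega})$ is its stretch factor, which is exactly the data feeding the companion statement about largest and smallest stretch factors.

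The main obstacle will be the uniformity of this train-track analysis: the local combinatorics of $f$ at the $i$-th crossing depend on $\omega_i$ and on its comparison with $\omega_{i\pm1}$, so the Bestvina--Handel efficiency conditions and the irreducibility of $M_{\bm{\omega}}$ must be verified once for all $N$ and all $2^{N-1}-2$ mixed sign patterns rather than case by case. I would therefore present $\tau$ and $f$ in sign-parametrized form, reduce efficiency to a finite list of local no-backtracking checks at consecutive crossings, and deduce irreducibility of $M_{\bm{\omega}}$ from the transitivity of the underlying $N$-cycle together with the presence of at least one sign change -- the latter being precisely what fails in the two homogeneous cases and what produces the expanding eigenvalue $\lambda(\bm{\omega})>1$.
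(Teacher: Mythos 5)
Your skeleton matches the paper's: existence comes from Yu's family of solutions, the primitive braid is read off from the crossing data encoded by the $\bm{\omega}$-topological constraints, the period-$N$ braid is the $N$th power of the primitive one, and the Nielsen--Thurston type is determined for the primitive braid and transported to its power. Two details in the reading-off step deserve care, though both are repairable. First, the $N-1$ crossings of strand $0$ with the other strands are spread over the half period $[0,N/2]$, not over one primitive period; within a single primitive period the crossings occur in two simultaneous bunches (all odd-indexed generators at the half-integer time, all even-indexed ones at the integer time), so the word you literally read off is a product of the even-indexed letters and the odd-indexed letters, and you need a small conjugation lemma using far commutativity (the paper's Lemma~\ref{lem_oe-alpha}) to identify its braid type with $\sigma_1^{\omega_1}\cdots\sigma_{N-1}^{\omega_{N-1}}$. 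Second, the signs come out reversed: the solution $\bm{z}_{\bm{\omega}}$ yields $\alpha_{-\bm{\omega}}$ (Theorem~\ref{thm_braid-type-omega}), which the paper fixes simply by feeding $-\bm{\omega}$ into Yu's theorem.

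The substantive divergence is in the mixed-sign case, and there your write-up has a genuine gap. The paper runs no train-track argument at all: it observes that for $\bm{\omega}\in\Omega_N^+$ the braid $\sigma_1^{\omega_1}\cdots\sigma_{N-1}^{\omega_{N-1}}$ is exactly the braid $\beta_{\bm{m}}$ attached to a composition $\bm{m}\neq(N-1)$ of $N-1$ (Lemma~\ref{lem_alpha-beta}), and then invokes the cited theorem of Kin--Takasawa (Theorem~\ref{thm_recursive-example}), which already asserts these braids are pseudo-Anosov and gives their stretch factors recursively. Your plan---build an invariant train track, verify Bestvina--Handel efficiency uniformly in $N$ and in the sign pattern, and show the transition matrix is Perron--Frobenius---is precisely the content of that cited work; in your proposal the hardest verifications are acknowledged as the ``main obstacle'' but not carried out, so as a self-contained argument the pseudo-Anosov certification is missing at exactly the point where all the difficulty sits. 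By contrast, your exclusion of periodicity is correct, complete, and more elementary than anything in the paper: periodic $N$-braids are conjugate to powers of $\delta=\sigma_1\cdots\sigma_{N-1}$ or of $\gamma=\delta\sigma_1$, whose exponent sums lie in $(N-1)\Z$ and $N\Z$ respectively, while a mixed $\bm{\omega}$ forces $\lvert\sum_i\omega_i\rvert\le N-3$, leaving only the trivial braid, contradicting the induced $N$-cycle. The paper never needs this step separately, since pseudo-Anosov-ness (hence non-periodicity and non-reducibility) comes wholesale from the citation; but your argument would be a nice shortcut if one only wanted to rule out the periodic alternative.
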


\begin{figure}[htbp]
\begin{center}
\includegraphics[width=3in]{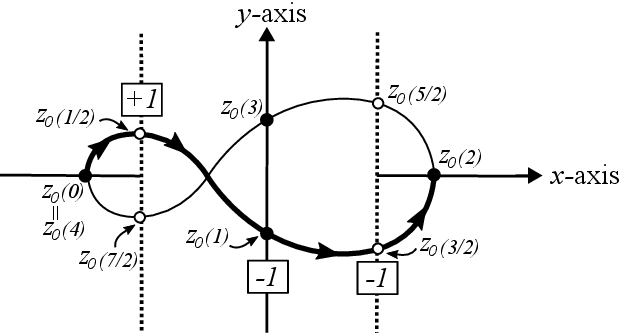}
\caption{
The closed curve obtained from the simple choreography 
$\bm{z}_{\bm{\omega}}(t)$ in the case $N=4$ and $\bm{\omega}= (1, -1,-1)$. 
Thick arrows illustrate 
the trajectory  of the $0$th particle $z_0(t)$  from 
$t=0$ to $\frac{N}{2}=2$. 
}
\label{fig_orbit_minimal4-braid}
\end{center}
\end{figure}

See Figure~\ref{fig_generator-sphere}(1) in 
Section~\ref{subsection_braid-mappingclass} 
for the generator $\sigma_i$ of the braid group. 
In 2017, Guowei Yu established the existence of a simple choreography 
$\bm{z}_{\bm{\omega}}(t) = (z_i(t))_{i=0}^{N-1}$ 
for each $\bm{\omega} \in \Omega_N$ 
to the planar Newtonian $N$-body problem with equal masses \cite{Yu17}. 
The period of the periodic solution $\bm{z}_{\bm{\omega}}(t)$ is $N$, 
and $\bm{z}_{\bm{\omega}}(t)$ fulfills 
$$z_{i}(t) = z_0(t+i) \ \ \text{for} \ t \in \mathbb{R} \ 
\text{and} \ i \in \{0,1,\dots,N-1\}.$$
Moreover, the closed curve on which $N$ particles travel 
is symmetric with respect to the $x$-axis. 
The element  $\bm{\omega}= (\omega_i)_{i=1}^{N-1} \in \Omega_N$ determines 
the shape of the closed curve (Figure~\ref{fig_orbit_minimal4-braid}). 
See Section~\ref{section_Yu} for more details. 
We will prove that 
the periodic solution $\bm{z}_{\bm{\omega}}(t)$ by Yu satisfies  
the statement of Theorem~\ref{thm_main}.

The figure-eight solution of the  planar  $3$-body problem \cite{ChenMont00} 
has the same braid type as the simple choreography $\bm{z}_{\bm{\omega}}(t)$ 
in the case $\bm{\omega} = (1, -1) $ (Example~\ref{ex_figure-8}). 
The super-eight solution of the planar $4$-body problem 
\cite{KZ03, Shibayama14} 
and the simple choreography $\bm{z}_{\bm{\omega}}(t)$ 
for $\bm{\omega} = (1, -1, 1)$ have the same braid type 
(Corollary~\ref{cor_super8}). 
See Figure~\ref{fig_Orb_n_4_seed_6}. 

\begin{figure}[htbp]
\begin{center}
\includegraphics[width=2.3in]{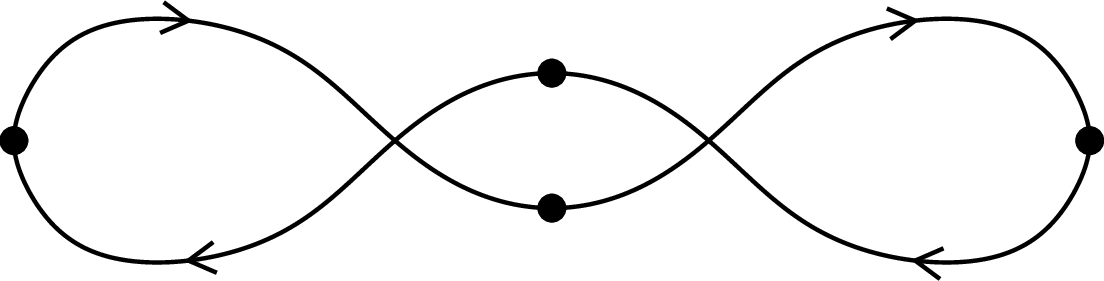}
\caption{The super-eight of the planar $4$-body problem.}
\label{fig_Orb_n_4_seed_6}
\end{center}
\end{figure}

To state the next result, 
we define 
$\bm{\omega}_{\max}, \bm{\omega}_{\min} \in \Omega_N$ 
as follows. 
\begin{eqnarray*}
\bm{\omega}_{\max}&:=& ((-1)^{i-1})_{i=1}^{N-1} 
= (1, -1, \dots, (-1)^{N-2}). 
\\
\bm{\omega}_{\min}&:=& (\omega_i)_{i=1}^{N-1}, \ 
\\
\mbox{where} \hspace{3mm}
 \omega_i&=& 
\left\{
\begin{array}{ll}
1
& \mbox{for}\  i= 1, \dots, \lfloor \frac{N}{2} \rfloor
\\
 -1
& \mbox{for}\ i=  \lfloor \frac{N}{2} \rfloor +1, \dots, N-1. 
\end{array}
\right.
\end{eqnarray*}
The simple choreography 
$\bm{z}_{\bm{\omega}_{\max}}(t)$ travels a chain made of $N-1$ loops. 
On the other hand, 
$\bm{z}_{\bm{\omega}_{\min}}(t)$ moves on a figure-eight curve and 
approximately half of the $N$ particles stay on each loop at every time. 
See Figure~\ref{fig:numsoln=19}.

\begin{figure}[htbp]
\begin{center}
  \begin{minipage}[b]{0.48\columnwidth}
  
\vspace{1.5cm}

  \centering
\includegraphics[scale=0.35]{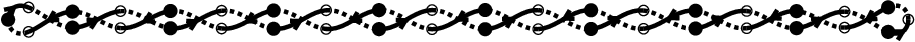}

(1) $\bm{z}_{\bm{\omega}}(t)$ for $\bm{\omega}= \bm{\omega}_{\max}$. 
\end{minipage}
 \begin{minipage}[b]{0.48\columnwidth}

 \vspace{1.5cm}
  \centering
\includegraphics[scale=0.35]{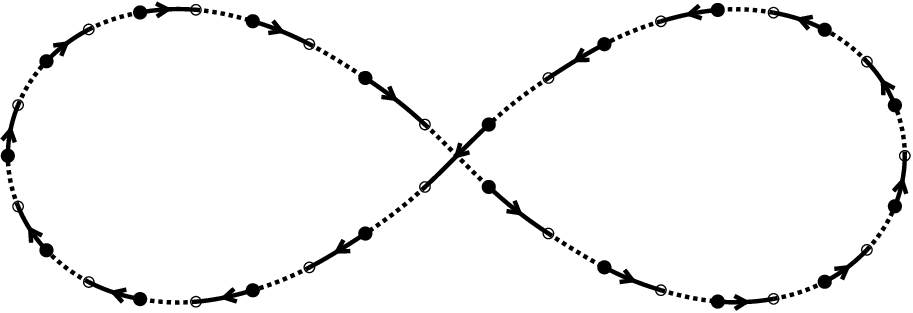} 

(2) $\bm{z}_{\bm{\omega}}(t)$ for $\bm{\omega}= \bm{\omega}_{\min}$. 
\end{minipage}
\caption{
Simple choreographies $\bm{z}_{\bm{\omega}}(t)$ in the case $N=19$. 
The dots denote the initial condition. 
The arrows indicate the trajectory of $\bm{z}_{\bm{\omega}}(t)$ 
from $t=0$ to $\frac{1}{2}$.}
\label{fig:numsoln=19}
\end{center}
\end{figure}

\begin{theorem}
\label{thm_omega_max-min}
Among all $\bm{\omega} \in \Omega_N$ except for
the two elements $(1,1, \dots,1)$ and $(-1,-1,\dots, -1)$, 
the simple choreography $\bm{z}_{\bm{\omega}}(t)$ 
whose braid type having the largest  stretch factor 
is realized  by $\bm{\omega}_{\max}$, 
while the one whose  braid type having the 
smallest  stretch factor is realized by 
$\bm{\omega}_{\min}$. 
\end{theorem}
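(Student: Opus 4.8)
The plan is to reduce the comparison of stretch factors of the braid types $\beta_{\bm{\omega}}^N$ to a comparison of the much simpler primitive stretch factors, and then to carry out a transfer‑matrix computation indexed by the sign pattern $\bm{\omega}$. Write $\beta_{\bm{\omega}} := \sigma_1^{\omega_1}\cdots\sigma_{N-1}^{\omega_{N-1}}$ for the primitive braid. Since the braid type is $\beta_{\bm{\omega}}^N$ and the stretch factor is multiplicative under iteration, $\lambda(\beta_{\bm{\omega}}^N)=\lambda(\beta_{\bm{\omega}})^N$; moreover, since a mapping class is pseudo‑Anosov as soon as one of its powers is, Theorem~\ref{thm_main} shows each $\beta_{\bm{\omega}}$ with $\bm{\omega}$ non‑homogeneous is itself pseudo‑Anosov. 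As $x\mapsto x^N$ is strictly increasing on $(1,\infty)$, the ordering of the stretch factors of the braid types coincides with the ordering of the $\lambda(\beta_{\bm{\omega}})$, so it suffices to prove that $\lambda(\beta_{\bm{\omega}})$ is maximal at $\bm{\omega}_{\max}$ and minimal at $\bm{\omega}_{\min}$.

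Next I would make $\lambda(\beta_{\bm{\omega}})$ computable. Using an invariant train track for $\beta_{\bm{\omega}}$ (as in the proof of Theorem~\ref{thm_main}, or built directly), $\beta_{\bm{\omega}}$ acts by a nonnegative transition matrix $M_{\bm{\omega}}$ whose Perron–Frobenius eigenvalue equals $\lambda(\beta_{\bm{\omega}})$; when the invariant foliation is orientable this matrix can be identified, up to signs, with the reduced Burau matrix of $\beta_{\bm{\omega}}$ evaluated at $t=-1$, a product of the elementary matrices representing $\sigma_i^{\pm1}$ placed along the diagonal. In either description $M_{\bm{\omega}}$ is banded, and its block at index $i$ depends only on the pair $(\omega_i,\omega_{i+1})$: a \emph{sign change} $\omega_i\neq\omega_{i+1}$ produces a different local block than a \emph{repeat} $\omega_i=\omega_{i+1}$. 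Expanding $\det(xI-M_{\bm{\omega}})$ by a transfer‑matrix recursion yields $\chi_{\bm{\omega}}(x)=$ (a fixed entry of) $\prod_{i} T_i(x)$, where each small factor $T_i(x)$ is determined by whether slot $i$ is a sign change. I expect the homogeneous patterns to contribute only parabolic factors (spectral radius $1$, matching the periodic case), so that growth is created precisely by the sign changes.

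The final step is the extremal analysis, which I expect to be the main obstacle. Two monotonicity statements are needed: (i) increasing the number of sign changes strictly increases the Perron root, so that the alternating pattern $\bm{\omega}_{\max}$, with the maximal number $N-2$ of sign changes, is the global maximizer; and (ii) among patterns with a single sign change (the fewest possible for pseudo‑Anosov $\beta_{\bm{\omega}}$), the balanced split $\bm{\omega}_{\min}=(\underbrace{1,\dots,1}_{\lfloor N/2\rfloor},\underbrace{-1,\dots,-1}_{N-1-\lfloor N/2\rfloor})$ minimizes the Perron root, whence it is the global minimizer. I would first cut down the search space using the symmetries preserving $\lambda$, namely the flip $\sigma_i\mapsto\sigma_{N-i}$ (which reverses $\bm{\omega}$) and $\beta\mapsto\beta^{-1}$ (which reverses and negates $\bm{\omega}$), reducing each claim to an inequality between transfer‑matrix products $\prod_i T_i(x)$ near the Perron root. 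For (i) I would isolate the effect of turning one repeat into a sign change and show it moves $\chi_{\bm{\omega}}$ across zero in the correct direction; for (ii) I would compare the two‑block polynomials $\chi_{(a,b)}$ with $a+b=N-1$ fixed and establish minimality at $|a-b|\le 1$.

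The genuine difficulty is that, unlike the case $N=3$ where everything collapses to $2\times2$ matrices (and the figure‑eight value $\tfrac{3+\sqrt5}{2}$ appears), for general $N$ the comparison is truly higher‑dimensional: entrywise domination of the $M_{\bm{\omega}}$ is too crude to separate the competing patterns, and, notably, the balancing phenomenon in (ii) is invisible to the $2\times2$ heuristic. I therefore anticipate needing an induction on $N$ together with positivity and log‑concavity properties of the transfer factors $T_i(x)$ for $x$ at or above the largest root, in order to force the required ordering of the Perron roots and thereby pin down $\bm{\omega}_{\max}$ and $\bm{\omega}_{\min}$ as the extremizers.
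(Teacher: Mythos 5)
Your first reduction is correct and matches the paper: the braid type of $\bm{z}_{\bm{\omega}}(t)$ is the $N$th power of the primitive braid, stretch factors are multiplicative under powers, and $x\mapsto x^N$ is increasing, so the extremal problem descends to the primitive braids $\alpha_{\bm{\omega}}$ (one small correction: by Theorem~\ref{thm_braid-type-omega} the primitive braid type is $\alpha_{-\bm{\omega}}$, not $\alpha_{\bm{\omega}}$, but this is harmless since mirror braids have equal stretch factors, cf.\ Lemma~\ref{lem_stretchfactor-equivalent}).

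The genuine gap is that everything after this reduction — which is the actual content of the theorem — is announced rather than proved. Your claims (i) and (ii) are precisely the extremal statements that the paper establishes as Theorem~\ref{thm_max-mini}, and you explicitly defer their proofs to an ``anticipated'' induction with positivity and log-concavity properties of transfer factors that you do not exhibit. Moreover, claim (i) as you state it (the Perron root is strictly monotone in the number of sign changes, comparing arbitrary patterns) is stronger than what is needed and is never what the paper proves; the paper's route is two different monotonicity statements: within a fixed number of blocks, $\lambda$ strictly decreases when any block size increases (Proposition~\ref{prop_decreasing-example}, imported from \cite{KinTakasawa08}), and $\lambda$ strictly increases when a new block is appended (Proposition~\ref{prop_increasing-example}, proved by a Perron--Frobenius submatrix-domination argument). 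These two facts pin $\beta_{\bm{1}_{N-1}}$ (i.e.\ $\bm{\omega}_{\max}$) as the maximizer and reduce the minimizer to two-block patterns $(m,n)$ by repeatedly merging the last two blocks. The remaining ``balancing'' inequality, $\lambda(\beta_{(n,n)})<\lambda(\beta_{(n-k,n+k)})$ and $\lambda(\beta_{(n-1,n)})<\lambda(\beta_{(n-k-1,n+k)})$, is exactly the phenomenon you admit your $2\times 2$ heuristic cannot see; it is genuinely nontrivial and the paper does not reprove it but cites \cite[Proposition~3.33]{HironakaKin06}. Without proofs of these three ingredients your argument does not close. A secondary concern: your proposed identification of the transition matrix with the reduced Burau matrix at $t=-1$ is only valid when the invariant foliations are orientable, which you do not verify for these braids; the paper avoids this entirely by working with the graph maps $g_{\bm{m}}$ and their transition matrices $M_{\bm{m}}$, whose Perron--Frobenius eigenvalues give $\lambda_{\bm{m}}$ via the Bestvina--Handel machinery, and with the explicit recursive polynomials $F_{\bm{m}}(t)$ of Theorem~\ref{thm_recursive-example}.
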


A {\it multiple choreography} of the planar $N$-body problem is a 
 periodic solution such that 
 particles travel on $k$ different closed curves for some $k>1$. 
In \cite{Shibayama06}, 
the third author established  
the existence of a family of multiple choreographies 
to the planar Newtonian $2N$-body problem. 
These periodic solutions have pseudo-Anosov braid types 
whose stretch factors are quadratic irrational 
\cite{KajiharaKinShibayama23}. 
In this sense, the multiple choreographies given in \cite{Shibayama06}  
have an algebraic restriction from view points of pseudo-Anosov stretch factors. 
On the other hand, simple choreographies in Theorem~\ref{thm_main} 
do not have such a restriction. 
In particular, the simple choreography $\bm{z}_{\bm{\omega}}(t)$ 
in the case $\bm{\omega}= (-1,1,1)$ gives us the following result.

\begin{corollary}
\label{cor_minimal-4braid}
There exists a simple choreography 
of the planar Newtonian $4$-body problem 
 whose primitive braid type  is given by the pseudo-Anosov braid 
 $\sigma_1 \sigma_2^{-1} \sigma_3^{-1}$. 
 The primitive braid type of the simple choreography  
 has the stretch factor 
 $\approx 2.2966$ 
 which is the largest real root of the  polynomial 
 $t^4-2t^3-2t+1$ with degree $4$. 
\end{corollary}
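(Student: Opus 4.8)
The plan is to separate the statement into an existence part, handled by specializing Theorem~\ref{thm_main}, and a computational part, where the dilatation is extracted from an invariant train track.

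For existence I would first note that $\sigma_1\sigma_2^{-1}\sigma_3^{-1}$ is the primitive braid $\sigma_1^{\omega_1}\sigma_2^{\omega_2}\sigma_3^{\omega_3}$ for $N=4$ and $\bm{\omega}=(1,-1,-1)$; the sign-reversed choice $\bm{\omega}=(-1,1,1)$ produces the mirror braid $\sigma_1^{-1}\sigma_2\sigma_3$, which is carried to $\sigma_1\sigma_2^{-1}\sigma_3^{-1}$ by an orientation-reversing homeomorphism and hence has the same stretch factor, so either choice serves. Since $(1,-1,-1)$ equals neither $(1,1,1)$ nor $(-1,-1,-1)$, Theorem~\ref{thm_main} directly supplies a simple choreography of the planar Newtonian $4$-body problem with primitive braid type $\sigma_1\sigma_2^{-1}\sigma_3^{-1}$ and pseudo-Anosov full braid type $(\sigma_1\sigma_2^{-1}\sigma_3^{-1})^4$. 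To see that the primitive braid $\beta:=\sigma_1\sigma_2^{-1}\sigma_3^{-1}$ is itself pseudo-Anosov, I would use that a mapping class is pseudo-Anosov if and only if one of its nonzero powers is: if $\beta$ were periodic or reducible, then so would be $\beta^4$, contradicting Theorem~\ref{thm_main}. This also records $\lambda(\beta)^4=\lambda(\beta^4)$ as a later consistency check.

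For the stretch factor I would represent $\beta$ as a homeomorphism of the $4$-punctured disk and build an invariant train track $\tau$, either by running the Bestvina--Handel algorithm or by adapting a train-track model for the Yu braids. Reading the induced transition matrix $M$ on the real edges of $\tau$, the dilatation $\lambda(\beta)$ is its Perron--Frobenius eigenvalue. Computing $\det(tI-M)$ and discarding the factors attached to the peripheral loops, I expect the essential factor to be the reciprocal quartic $f(t)=t^4-2t^3-2t+1$. I would then verify that $f$ is irreducible over $\mathbb{Q}$: it has no rational root (indeed $f(1)=-2$ and $f(-1)=6$), and the substitution $u=t+t^{-1}$ turns it into $u^2-2u-2$, whose roots $1\pm\sqrt{3}$ are irrational. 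Hence $\lambda(\beta)$ has algebraic degree $4$, and a sign change of $f$ between $t=2.29$ and $t=2.30$ pins the unique real root greater than $1$ at $\lambda(\beta)\approx 2.2966$.

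The main obstacle is the train-track step: producing an efficient invariant train track and checking that the associated graph map is genuinely a train-track map (no back-tracking, no invariant proper subgraph), so that the Perron--Frobenius eigenvalue of $M$ is the true dilatation rather than merely a lower bound. That this care is necessary is shown by the failure of the homological shortcut here: the reduced Burau matrix of $\beta$ evaluated at $t=-1$ has characteristic polynomial $(t-1)^3$, so its spectral radius is $1$ and it detects none of the pseudo-Anosov dynamics. Once a correct transition matrix is in hand, matching $\det(tI-M)$ to $t^4-2t^3-2t+1$ and locating the root $\approx 2.2966$ is routine.
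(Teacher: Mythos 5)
Your existence step is correct and coincides with the paper's: you specialize Theorem~\ref{thm_main} to $\bm{\omega}=(1,-1,-1)$ (the paper runs this through Theorem~\ref{thm_braid-type-omega}, which gives $\alpha_{-\bm{\omega}}=\sigma_1^{-1}\sigma_2\sigma_3$ as the primitive braid type of $\bm{z}_{\bm{\omega}}(t)$, and then passes to the solution $\bm{z}_{-\bm{\omega}}(t)$ to realize $\alpha_{\bm{\omega}}=\sigma_1\sigma_2^{-1}\sigma_3^{-1}$), and your mirror/sign-reversal remark is exactly Lemma~\ref{lem_stretchfactor-equivalent}.

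The gap is in the quantitative half. The claim that $\lambda(\sigma_1\sigma_2^{-1}\sigma_3^{-1})$ is the largest real root of $t^4-2t^3-2t+1$ is never established in your proposal: you say you would build an invariant train track via Bestvina--Handel and that you \emph{expect} the essential factor of $\det(tI-M)$ to be that quartic, and you yourself flag the construction and the efficiency check as ``the main obstacle.'' A proof cannot end with the characteristic polynomial being expected; as written, nothing ties this braid to that specific polynomial, so the second sentence of the corollary is unproved. Moreover, the machinery you propose to build is already packaged in the paper: by Lemmas~\ref{lem_bijection} and \ref{lem_alpha-beta} one has $\Theta((1,2))=(1,-1,-1)$, hence $\alpha_{(1,-1,-1)}=\beta_{(1,2)}$, and Theorem~\ref{thm_recursive-example} asserts that $\beta_{(1,2)}$ is pseudo-Anosov (making your separate power argument unnecessary) with stretch factor the largest real root of
\begin{equation*}
F_{(1,2)}(t)=t^{2}R_{(1)}(t)+{R_{(1)}}_*(t)
=t^{2}(t^{3}-t^{2}-2t)+(1-t-2t^{2})
=(t+1)(t^{4}-2t^{3}-2t+1),
\end{equation*}
exactly as in Example~\ref{ex_beta-mn}. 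Combining this with Lemma~\ref{lem_stretchfactor-equivalent} to pass between $\alpha_{\bm{\omega}}$ and $\alpha_{-\bm{\omega}}$ finishes the corollary in a few lines; this is the paper's proof. If you insist on the train-track route, you must actually exhibit the graph map and its transition matrix (this is what \cite{KinTakasawa08} does, and what the computation outlined in Section~\ref{subsection_strech-factors} provides); until then the decimal $2.2966$ and the quartic remain unsupported, however plausible.
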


For other periodic solutions that have been proven to exist, see, 
for example, \cite{Chen03a,Chen03b,Chen08,FerrarioTerracini04}. 
See also \cite{Simo01,Doedeletal03,Suvakov13} 
for periodic solutions that have been obtained numerically.

The organization of the paper is as follows. 
In Section~\ref{section_preliminaries}, 
we recall basic results on the braid groups and mapping class groups. 
In Section~\ref{section_Yu}, we review  the simple choreographies by Yu. 
Section~\ref{section_proofs} and Section~\ref{section_conclusion} 
contain the proofs of results and the conclusion in this paper. 

\subsection*{Acknowledgment}  
The second author would like to express her deep gratitude to Prof. Masaya Yamaguti, whose words have long guided her approach to mathematics:
``Mathematics is not meant only for geniuses." 
``What matters is not whether one has a talent for mathematics,
but rather to consider what we ought to do now,
and what is most important at this very moment.
As long as we keep that in mind, we can move forward without fear."

\section{Preliminaries} 
\label{section_preliminaries}

\subsection{Braid groups and mapping class groups}
\label{subsection_braid-mappingclass}
We review the basics of the braid groups. 
See also  \cite[Chapters~1, 4]{Birman74}. 
Let $B_n$ be the braid group of $n$ strands 
generated by $\sigma_1, \dots, \sigma_{n-1} $. 
The group $B_n$ has  the following presentation. 
$$
B_n=\left\langle \sigma_1, \dots, \sigma_{n-1}
\left|
\begin{matrix}
\ \sigma_i \sigma_j= \sigma_j \sigma_i \hspace{12mm} \mbox{if}\ |i-j|>1
 \\
\ \sigma_i \sigma_j \sigma_i = \sigma_j \sigma_i \sigma_j \hspace{5mm} 
 \mbox{if}\ |i-j|=1
\end{matrix}
\right.
\right\rangle
$$
The generator $\sigma_i$ corresponds to a geometric braid 
as in Figure~\ref{fig_generator-sphere}(1). 
There is a surjective homomorphism 
\begin{equation}
\label{equation_symmetrygroup}
  \hat{s}: B_n \to S_n  
\end{equation}
from  $B_n$ to the symmetry group $S_n$ 
of $n$ elements sending each $\sigma_j$ to the transposition $(j, j+1)$. 
The kernel of $\hat{s}$ is called the \textit{pure braid group} 
$P_n < B_n$. 
An element of $P_n$ is called a \textit{pure braid}.  

\begin{figure}[htbp]
\begin{center}
\includegraphics[width=3.8in]{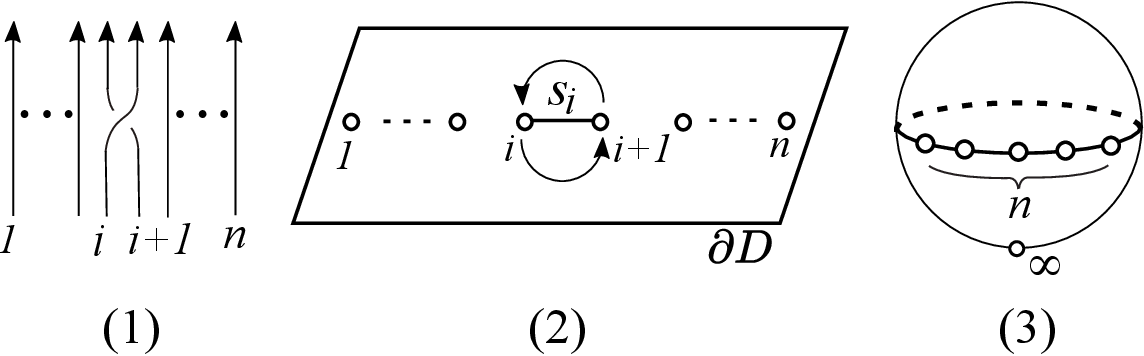}
\caption{
(1) $\sigma_i \in B_n$. 
(2) $h_i = \Gamma(\sigma_i) \in \mathrm{MCG}(D_n)$. 
(3) $\Sigma_{0, n+1}$. 
}
\label{fig_generator-sphere}
\end{center}
\end{figure}

Let $Z(B_n)$ be the center of the $n$-braid group $B_n$. 
The subgroup $Z(B_n)$ is an  infinite cyclic group 
generated by the full twist  
$\Delta^2 \in B_n$ (\cite[Corollary~1.8.4]{Birman74}), 
where $\Delta  \in B_n$ is the half twist 
that is defined by 
$$\Delta= (\sigma_1 \sigma_2 \dots \sigma_{n-1}) 
(\sigma_1 \sigma_2 \dots \sigma_{n-2}) \dots 
(\sigma_1 \sigma_2) \sigma_1.$$ 
Note that  $\Delta^2$ is obtained by rotating 
the set of $n$ points  one full revolution.

Let $\Sigma$ be an orientable, connected surface, 
possibly with punctures and boundary. 
The \textit{mapping class group} $\mathrm{MCG}(\Sigma)$ of $\Sigma$ 
is the group of 
isotopy classes of orientation preserving homeomorphisms of $\Sigma$ 
which preserve the punctures and boundary setwise. 
Let $\Sigma_{g,n}$ be an orientable, connected surface of genus $g$ and $n$ punctures. 
In this paper, we consider the mapping class groups of an 
$n$-punctured disk $D_n$ and an $n$-punctured sphere $\Sigma_{0,n}$. 
The group $\mathrm{MCG}(D_n)$ is generated by 
$h_1, \dots, h_{n-1}$, 
where $h_i$ is the right-handed half twist about  a segment $s_i$ 
connecting the $i$th puncture and $i+1$th puncture, i.e., 
$h_i$  interchanges the $i$th puncture and $i+1$th puncture  as in Figure~\ref{fig_generator-sphere}(2). 
A relation between $B_n$ and $\mathrm{MCG}(D_n)$ is given by 
the following surjective homomorphism 
\begin{equation}
\label{equation_Gamma}
\Gamma: B_n \rightarrow \mathrm{MCG}(D_n) 
\end{equation}
which sends $\sigma_i$ to $h_i$ for each $i \in \{1, \dots, n-1\}$. 
The kernel of $\Gamma$ is the center $Z(B_n)$ of $B_n$. 

\begin{definition}
\label{def_braid-type} 
The \textit{braid type} $\langle b \rangle$ of a braid $b \in B_n$ 
is the conjugacy class of $\Gamma(b)$ in $\mathrm{MCG}(D_n)$. 
Since $\mathrm{MCG}(D_n)$ is isomorphic to $B_n/Z(B_n)$, 
the braid type $\langle b \rangle$ can be identified with a conjugacy class 
in $B_n/Z(B_n)$. 
We may call the braid type of a pure braid 
the \textit{pure braid type}. 
\end{definition}

\subsection{Nielsen-Thurston classification}
\label{subsection_Nielsen-Thurston}

We assume that $3g-3+n \ge 1$. 
According to the Nielsen-Thurston classification of 
surface automorphisms \cite{Thurston88}, 
elements of $\mathrm{MCG}(\Sigma_{g,n})$ are classified into three types: periodic, reducible and pseudo-Anosov as we recall now. 
A mapping class $\phi \in \mathrm{MCG}(\Sigma_{g,n})$ is \textit{periodic}  
if $\phi$ is of finite order. 
A simple closed curve $C$ in $\Sigma_{g,n}$ is \textit{essential} 
if it is not homotopic to a point (possibly a point corresponding to a puncture).  
A mapping class 
$\phi \in \mathrm{MCG}(\Sigma_{g,n})$ is \textit{reducible} 
if there is a  collection of mutually disjoint and non-homotopic essential simple closed curves 
$C_1, \dots, C_j$ in $\Sigma_{g,n}$ (possibly $j=1$) such that 
$C_1 \cup \dots \cup C_j$ is preserved by $\phi$. 
Notice that there is a mapping class that is periodic and reducible.  
A mapping class $\phi \in \mathrm{MCG}(\Sigma_{g,n})$ is 
\textit{pseudo-Anosov} 
if $\phi$ is neither periodic nor reducible.  
The Nielsen-Thurston type is a conjugacy invariant, 
i.e., 
two mapping classes are conjugate to each other in $\mathrm{MCG}(\Sigma_{g,n})$, 
then their Nielsen-Thurston types are the same. 

We review properties of pseudo-Anosov mapping classes. 
For more details, see \cite{FLP,FarbMargalit12,Boyland94}. 
A homeomorphism $\Phi: \Sigma_{g,n} \rightarrow \Sigma_{g,n}$ is a 
\textit{pseudo-Anosov map} 
if there exist a constant $\lambda= \lambda(\Phi)>1$ and 
a pair of transverse measured foliations 
$(\mathcal{F}^+, \mu^+)$ and 
$(\mathcal{F}^-, \mu^-)$  so that 
$\Phi$ preserves both foliations $\mathcal{F}^+$ and $\mathcal{F}^-$, 
 and it contracts the leaves of $\mathcal{F}^-$ by $\frac{1}{\lambda}$ and 
 it expands the leaves of $\mathcal{F}^+$ by $\lambda$. 
More precisely, $\Phi$ fulfills 
$$\Phi((\mathcal{F}^+,  \mu^+)) = 
(\mathcal{F}^+, \lambda \mu^+)\ \hspace{2mm} \mbox{and}\  \hspace{2mm}
 \Phi((\mathcal{F}^-,  \mu^-)) = (\mathcal{F}^-, \tfrac{1}{\lambda} \mu^-).$$ 
The constant $\lambda>1$ is called the \textit{stretch factor} of $\Phi$. 
For each pseudo-Anosov mapping class $\phi \in \mathrm{MCG}(\Sigma_{g,n})$, 
there exists  a pseudo-Anosov homeomorphism 
$\Phi: \Sigma_{g,n} \rightarrow \Sigma_{g,n}$  
that is a representative of $\phi$. 
The \textit{stretch factor} $\lambda(\phi)$ of $\phi$ is defined by 
$\lambda(\phi)= \lambda(\Phi) $, 
and it is a conjugacy invariant of pseudo-Anosov mapping classes. 
We understand that stretch factors measure the complexity of pseudo-Anosov mapping classes.

A square matrix $M$ with nonnegative integer entries 
is \textit{Perron-Frobenius} 
if some power of $M$ is a positive matrix. 
In this case, 
the Perron-Frobenius theorem \cite[Theorem 1.1]{Seneta06} 
tells us that 
$M$ has a real eigenvalue 
$\lambda(M)>1$ which exceeds the moduli of all other eigenvalues. 
We call $\lambda(M)$ the \textit{Perron-Frobenius eigenvalue}. 
It is known that 
the stretch factor  $ \lambda(\phi)$ of a pseudo-Anosov mapping class 
$\phi$ is the largest eigenvalue of a Perron-Frobenius matrix, 
that is $ \lambda(\phi)$ is a Perron number. 
Observe that 
if $\phi$ is a pseudo-Anosov mapping class, then 
$\phi^k$ is pseudo-Anosov for all $k \ge 1$, and 
it holds  
\begin{equation}
\label{equation_stretchfactor-power}
\lambda(\phi^k)= (\lambda(\phi))^k.
\end{equation}

We recall the homomorphism 
$\Gamma: B_n \rightarrow \mathrm{MCG}(D_n)$ as in (\ref{equation_Gamma}). 
Collapsing the boundary of the disk to a point $\infty$ in the sphere, 
we obtain the $n+1$-punctured sphere $\Sigma_{0,n+1}$ 
(see Figure~\ref{fig_generator-sphere}(3)) and 
a homomorphism 
$$\mathfrak{c}: \mathrm{MCG}(D_n) \rightarrow \mathrm{MCG}(\Sigma_{0, n+1}).$$ 
We may identify a mapping class $\Gamma(b) \in \mathrm{MCD}(D_n) $ 
for  $b \in B_n$
with $\mathfrak{c}(\Gamma(b)) \in \mathrm{MCG}(\Sigma_{0, n+1})$. 
We say that a braid $b \in B_n$ is \textit{periodic} 
(resp. \textit{reducible}, \textit{pseudo-Anosov}) 
if the mapping class $\mathfrak{c}(\Gamma(b))$ is of the corresponding type. 
When  $b$ is  pseudo-Anosov, 
its \textit{stretch factor} $\lambda(b)$  is defined by the stretch factor 
of the pseudo-Anosov mapping class $\mathfrak{c}(\Gamma(b))$. 
In this case, it makes sense to say that the braid type $\langle b \rangle$ is 
pseudo-Anosov, since the Nielsen-Thurston type is a conjugacy invariant. 
The \textit{stretch factor}   
$\lambda(\langle b \rangle)$ of the braid type $\langle b \rangle$ can be defined  by $\lambda(\langle b \rangle)=\lambda(b)$.

\subsection{Braids as particle dances}
\label{subsection_particle-dances}
We consider the motion of $N$ points in the plane ${\Bbb R}^2$ 
$${\bm z}(t)= (z_0(t), \dots, z_{N-1}(t)),\ 
z_i(t) \in {\Bbb R}^2\ 
(i=0, \dots, N-1)$$ 
where $z_i(t) \in {\Bbb R}^2$ is the position of the $i$th point at 
 $t \in {\Bbb R}$. 
We assume the following conditions. 
\begin{itemize}
\item 
(collision-free) 
$z_i (t) \ne z_j(t)$ for $i \ne j$ and $t \in {\Bbb R}$. 

\item 
(periodicity) 
There exists  $T >0$ such that 
$$\{z_0(t), \dots, z_{N-1}(t)\}= \{z_0(t+T), \dots, z_{N-1}(t+T)\} \hspace{5mm} \mbox{for}\ t \in {\Bbb R}.$$
\end{itemize}
We take time to be a third axis orthogonal to the plane. 
Fixing $t_0 \in {\Bbb R}$, we have mutually disjoint $N$ curves 
\begin{eqnarray*}
[t_0,t_0+T] &\rightarrow& {\Bbb R}^2 \times {\Bbb R}
\\
t &\mapsto& (z_i(t), t). 
\end{eqnarray*}
The union of the curves forms a braid, denoted by $b(\bm{z}([t_0, t_0+T]))$. 
Such a braid is sometimes referred to as a \textit{particle dance}.

We turn to a  periodic solution 
$\bm{z}(t)= (z_0(t), \dots, z_{N-1}(t))$ 
of the planar Newtonian $N$-body problem. 
Suppose that 
the periodic solution $\bm{z}(t)$ has a  period $T$, 
that is $T$ is the smallest positive number such that 
$z_i(t)= z_i(t+T)$ for all $i= 0, \dots, N-1$ and $t \in {\Bbb R}$. 
Choosing any $t_0 \in {\Bbb R}$, 
we obtain a pure braid $b(\bm{z}([t_0,t_0+T]))$ with $N$ strands 
that obviously depends on the choice of $t_0$. 
Although the set of base points 
$\{z_0(t_0), \dots, z_{N-1}(t_0)\}$ of $b(\bm{z}([t_0,t_0+T]))$ 
does not necessarily lie along a straight line in the plane, it still makes sense to consider its braid type 
$\big \langle b(\bm{z}([t_0,t_0+T])) \big \rangle $. 
See \cite[Section~3.1]{KajiharaKinShibayama23}. 
Such a braid type $\big \langle b(\bm{z}([t_0,t_0+T])) \big \rangle $ 
does not depend on the choice of $t_0$ and 
we call it  
the \textit{braid type of the periodic solution} $\bm{z}(t)$. 

Suppose that $\bm{z}(t)$ is a simple choreography with period $T$ 
of the planar Newtonian $N$-body problem. 
Choosing any $t_0 \in {\Bbb R}$, 
we have 
$$\{z_0(t_0), \dots, z_{N-1}(t_0)\}= 
\{z_0(t_0+\tfrac{T}{N}), \dots, z_{N-1}(t_0+\tfrac{T}{N})\},$$ 
and we obtain a braid 
$b:= b(\bm{z}\big([t_0, t_0+ \frac{T}{N}]\big))$ 
as a particle dance. 
We call $\frac{T}{N}$ the \textit{primitive period} of the 
simple choreography $\bm{z}(t)$ and call 
the braid $b(\bm{z}\big([t_0, t_0+ \frac{T}{N}]\big))$ 
a \textit{primitive braid} of  $\bm{z}(t)$. 
Its braid type 
$\big \langle b(\bm{z}\big([t_0, t_0+ \frac{T}{N}]\big)) \big \rangle$
is called the \textit{primitive braid type} of the simple choreography 
$\bm{z}(t)$. 
Clearly, the $N$th powers $b^N$ of 
the primitive braid $b$ equals $b(\bm{z}([t_0,t_0+T]))$, 
and $b^N$ represents the  braid type 
$\big \langle b(\bm{z}([t_0,t_0+T])) \big \rangle $ of the periodic solution 
$\bm{z}(t)$.

\subsection{Compositions of integers} 
A \textit{composition} of a positive integer $n$ is a representation 
of $n$ as a sum of positive integers. 
For example, 
there are four compositions of $3$:
$$3= 3,\ 3= 1+2,\ 3= 2+1, 3= 1+1+1.$$
(There is another notion `partition' of a positive integer. 
A \textit{partition} of  $n$ is a representation of $n$ 
as a sum of positive integers, 
where the order of the summands is \textit{not} taken into account.)  

A composition of $n$ is written by a $(k+1)$-tuple 
$\bm{m}= (m_1, \cdots, m_{k+1})$ 
of positive integers $m_i$ with 
$n= \sum_{i=1}^{k+1}m_i$ and $k \ge 0$. 
Let 
$\Psi_n$ denote the set of all compositions of $n$. 
For example, $\Psi_3$ consists of the four compositions 
$$(3),\  (1,2),\  (2,1),\  (1,1,1).$$
The cardinality of the set 
$\Psi_n$ is $2^{n-1}$. 
To see this, 
consider $n$ circles. 
$$\circ_1 \hspace{5mm} \circ_2 \hspace{5mm} \circ_3 \hspace{5mm} \cdots 
\hspace{5mm}  \circ_n$$
There are $n-1$ spaces between them. 
In each space, one can choose to place a bar or leave it empty. 
There exist $2^{n-1}$ possible ways to place the bars, 
and each configuration corresponds to a composition of $n$. 
This means that 
the number of compositions of $n$ is $2^{n-1}$. 
For example, 
the four compositions of $3$ are written by 
\begin{equation}
\label{equation_four-compositions}
\circ\ \circ \ \circ \leftrightarrow (3),\hspace{2mm} 
\circ| \circ \ \circ \leftrightarrow (1,2) ,\hspace{2mm} 
\circ\ \circ|\circ \leftrightarrow (2,1),\hspace{2mm} 
\circ| \circ| \circ \leftrightarrow (1,1,1) 
\end{equation}

We turn to another set that is related to compositions of integers. 
Recall  the set $\Omega_N$ as in Section~\ref{section_introduction}. 
Given an integer $N \ge 3$, 
let $\Omega_N^+$ denote the set of all elements 
$\bm{\omega}= (\omega_1, \dots, \omega_{N-1}) \in \Omega_N$ 
whose first component $\omega_1$ is $1$:
$$\Omega_N^{+}:= \{\bm{\omega}= (\omega_1, \dots, \omega_{N-1}) 
\in \Omega_N\ | \ \omega_1= 1\}.$$

\begin{lemma}
\label{lem_bijection} 
There is a bijection $\Theta: \Psi_{N-1} \rightarrow \Omega_N^+$. 
\end{lemma}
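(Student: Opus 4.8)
The plan is to construct $\Theta$ and its inverse explicitly via the ``circles and bars'' encoding already introduced for $\Psi_{N-1}$. Recall that a composition $\bm{m} = (m_1, \dots, m_{k+1}) \in \Psi_{N-1}$ corresponds to placing $k$ bars in the $N-2$ gaps between the $N-1$ circles $\circ_1, \dots, \circ_{N-1}$, the bars sitting precisely at the gaps following the partial sums $s_j := m_1 + \cdots + m_j$ for $j = 1, \dots, k$. I would match this data with the $N-2$ \emph{sign-change slots} of an element $\bm{\omega} = (\omega_1, \dots, \omega_{N-1}) \in \Omega_N^+$, namely the slots $i \in \{1, \dots, N-2\}$ at which $\omega_i \ne \omega_{i+1}$.

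Concretely, I would define $\Theta(\bm{m}) = \bm{\omega}$ by partitioning $\{1, \dots, N-1\}$ into consecutive blocks of lengths $m_1, m_2, \dots, m_{k+1}$ and assigning the alternating signs $+1, -1, +1, \dots$ to successive blocks, starting with $+1$. Equivalently, set $\omega_1 = 1$ and, for $1 \le i \le N-2$, put $\omega_{i+1} = -\omega_i$ if $i \in \{s_1, \dots, s_k\}$ and $\omega_{i+1} = \omega_i$ otherwise. Since the first block carries sign $+1$, the first component satisfies $\omega_1 = 1$, so $\Theta(\bm{m})$ indeed lies in $\Omega_N^+$.

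For the inverse, given $\bm{\omega} \in \Omega_N^+$ I would read off the maximal runs of constant sign in the sequence $\omega_1, \dots, \omega_{N-1}$; their lengths, listed from left to right, form a tuple of positive integers summing to $N-1$, hence a composition $\Theta^{-1}(\bm{\omega}) \in \Psi_{N-1}$. The observation making this well defined and inverse to $\Theta$ is that the constraint $\omega_1 = 1$ pins down the signs of all runs: since consecutive runs alternate in sign and the first run is positive, the run-lengths alone reconstruct $\bm{\omega}$. Thus the run-length data and the sign data carry exactly the same information.

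It then remains to check that $\Theta$ and $\Theta^{-1}$ are mutually inverse, which is routine: the bar positions $\{s_1, \dots, s_k\}$ of $\bm{m}$ coincide with the sign-change slots of $\Theta(\bm{m})$, and conversely the run-lengths of $\bm{\omega}$ recover the block lengths. No genuine obstacle arises; the only point requiring care is the bookkeeping of the off-by-one relations between the $N-1$ circles, the $N-2$ gaps (equivalently, sign-change slots), and the $N-1$ components $\omega_i$, together with verifying that fixing $\omega_1 = 1$ (rather than allowing $\omega_1 = \pm 1$) is exactly what removes the sign ambiguity and yields a bijection rather than a two-to-one map. As a sanity check, both $\Psi_{N-1}$ and $\Omega_N^+$ have cardinality $2^{N-2}$, consistent with $\Theta$ being a bijection.
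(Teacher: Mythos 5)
Your proposal is correct and follows essentially the same route as the paper: both define $\Theta$ by assigning alternating signs $+1,-1,\dots$ to consecutive blocks of lengths $m_1,\dots,m_{k+1}$ (equivalently, flipping the sign exactly at the bar positions), and both invert it by reading off the maximal runs of constant sign, with the normalization $\omega_1=1$ removing the sign ambiguity. Your version just makes the inverse map and the off-by-one bookkeeping more explicit than the paper's brief argument.
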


\begin{proof}
We represent a composition $\bm{m} \in \Psi_{N-1}$ 
by using $N-1$ circles together with bars. 
We now define $\Theta({\bm{m}}) = (\omega_1=1, \omega_2 \dots, \omega_{N-1})$. 
For each $i \in \{1, \dots, N-1\}$,
we replace the $i$th circle with
$1$ or $-1$ which indicates
$\omega_i=1$ or $\omega_i=-1$ 
as we explain now. 
We set $\omega_1= 1$. 
Suppose that we have replaced the $i$th circle with $1$ or $-1$. 
(Then $\omega_i$ is determined.)  
If there is a bar between $i$th circle and $(i+1)$th circle, 
then we set $\omega_{i+1}= - \omega_{i}$. 
Otherwise, we set $\omega_{i+1}= \omega_i$. 
In other words, if 
$\bm{m}= (m_1, \dots, m_{k+1})$ is an element of $\Psi_{N-1}$, then  
$$\Theta(\bm{m}) = (\underbrace{1,\dots, 1}_{m_1},
\underbrace{-1,\dots, -1}_{m_2}, \dots, 
\underbrace{(-1)^{k},\dots, (-1)^{k}}_{m_{k+1}}) \in \Omega_{N}^+.$$
Clearly, $\Theta: \Psi_{N-1} \rightarrow \Omega_N^+$ is injective. 
To see $\Theta: \Psi_{N-1} \rightarrow \Omega_N^+$ is surjective, 
we take any 
$\bm{\omega}= (\omega_1 =1, \omega_2, \dots, \omega_{N-1}) \in \Omega_{N}^+$. 
By applying the reverse operation, 
one can determine a composition $\bm{m}_{\bm{\omega}} \in \Psi_{N-1}$ 
so that $\Theta(\bm{m}_{\bm{\omega}}) = \bm{\omega}$. 
This completes the proof. 
\end{proof}

For example,  
the bijection $\Theta: \Psi_3 \rightarrow \Omega_4^+$ is described as follows. 
\begin{eqnarray*}
\Theta: \hspace{3mm}\Psi_3 &\rightarrow& \Omega_4^+
\\
\circ\ \circ \ \circ &\mapsto& (1,1,1), 
\\
\circ| \circ \ \circ&\mapsto& (1, -1,-1), 
\\
\circ\ \circ|\circ&\mapsto&(1,1,-1), 
\\
\circ| \circ| \circ&\mapsto&(1,-1,1). 
\end{eqnarray*}

We write the $n$-tuple 
$ (1,1, \cdots, 1) \in \Psi_n$ correspoinding to the composition 
$n= 1+ 1+ \cdots +1$ as $ \bm{1}_n$. 

\begin{example}
\label{ex_correspondence}
Consider the bijection $\Theta: \Psi_{N-1} \rightarrow \Omega_N^+$. 
\begin{enumerate}
\item 
The image of $\bm{1}_{N-1}$ under $\Theta$ is 
$$\Theta(\bm{1}_{N-1}) =((-1)^{i-1})_{i=1}^{N-1}= (1, -1, \dots, (-1)^{N-2}).$$
\item 
The image of 
the composition $ (N-1)$ under $\Theta$ is 
$ (1,1, \dots, 1)$. 
\end{enumerate}
\end{example}

\subsection{Braids associated with compositions of integers}
\label{subsection_pAfamily}

We introduce braids 
$\alpha_{\bm{\omega}}$, $e_{\bm{\omega}}$, $o_{\bm{\omega}}$ 
for each $\bm{\omega} \in \Omega_N$ 
and $\beta_{\bm{m}}$ for each composition $\bm{m} \in \Psi_{N-1}$. 
We first define an $N$-braid $\alpha_{\bm{\omega}}$ 
for $\bm{\omega} = (\omega_1, \dots, \omega_{N-1})$
as follows. 
\begin{equation}
\label{equation_alpha}
\alpha_{\bm{\omega}}: = \sigma_1^{\omega_1} \sigma_2^{\omega_2} \cdots \sigma_{N-1}^{\omega_{N-1}}.    
\end{equation}
Notice that the $N$th power $\alpha_{\bm{\omega}}^N$ of $\alpha_{\bm{\omega}}$ is a pure braid. 
We next define $N$-braids 
$e_{\bm{\omega}}$ and $o_{\bm{\omega}}$ as follows: 
\begin{equation}
\label{equation_3kinds_of_braids}
e_{\bm{\omega}}:= \prod_{\substack{i \in \{1, \dots,N-1\} \\ i\ \text{even}  }} \sigma_i^{\omega_i}, \hspace{5mm}
o_{\bm{\omega}}:= \prod_{\substack{i \in \{1, \dots,N-1\} \\ i\ \text{odd}  }} \sigma_i^{\omega_i}. 
\end{equation}

\begin{example}
\label{ex_N7}
For each $\bm{\omega} \in \Omega_7$, 
we demonstrate that 
$ e_{\bm{\omega}} o_{\bm{\omega}}$ is conjugate to $\alpha_{\bm{\omega}}$ 
in $B_7$. 
Recall that the braid group has a relation 
$\sigma_i \sigma_j= \sigma_j \sigma_i$ if $|i-j|>1$. 
We set 
$b_0:= e_{\bm{\omega}}^{-1} (e_{\bm{\omega}} o_{\bm{\omega}})e_{\bm{\omega}} 
= o_{\bm{\omega}}  e_{\bm{\omega}}$. 
Then we have 
$$b_0= o_{\bm{\omega}} \cdot e_{\bm{\omega}}
=
\sigma_5^{\omega_5} \sigma_3^{\omega_3} \sigma_1^{\omega_1} 
\cdot 
\sigma_6^{\omega_6} \sigma_4^{\omega_4} \sigma_2^{\omega_2} 
=
(\sigma_5^{\omega_5} \sigma_6^{\omega_6})  
(\sigma_3^{\omega_3} \sigma_4^{\omega_4} 
\sigma_1^{\omega_1} \sigma_2^{\omega_2}). $$
Set 
$b_1:= (\sigma_5^{\omega_5} \sigma_6^{\omega_6})^{-1} b_0 
(\sigma_5^{\omega_5} \sigma_6^{\omega_6})$. 
We have 
$$b_1= (\sigma_3^{\omega_3} \sigma_4^{\omega_4} 
\sigma_1^{\omega_1} \sigma_2^{\omega_2}) 
(\sigma_5^{\omega_5} \sigma_6^{\omega_6} ) 
= 
(\sigma_3^{\omega_3} \sigma_4^{\omega_4} 
\sigma_5^{\omega_5} \sigma_6^{\omega_6})  
(\sigma_1^{\omega_1} \sigma_2^{\omega_2}).$$
Set $b_2:=(\sigma_3^{\omega_3} \sigma_4^{\omega_4} 
\sigma_5^{\omega_5} \sigma_6^{\omega_6})^{-1} b_1 
(\sigma_3^{\omega_3} \sigma_4^{\omega_4} 
\sigma_5^{\omega_5} \sigma_6^{\omega_6})$ 
that is of the form 
$$b_2
=\sigma_1^{\omega_1} \sigma_2^{\omega_2}\sigma_3^{\omega_3} \sigma_4^{\omega_4} 
\sigma_5^{\omega_5} \sigma_6^{\omega_6}
= \alpha_{\bm{\omega}}.$$
Thus, $e_{\bm{\omega}} o_{\bm{\omega}}$, $b_0$, $b_1$, 
and $b_2= \alpha_{\bm{\omega}}$ 
are conjugate to each other in $B_7$. 
\end{example}

The following lemma is used in the proof of Theorem~\ref{thm_braid-type-omega}. 

\begin{lemma} 
\label{lem_oe-alpha}
For each $\bm{\omega} \in \Omega_N$, 
the braids $e_{\bm{\omega}} o_{\bm{\omega}}$ and $\alpha_{\bm{\omega}}$ are conjugate to each other in the $N$-braid group $B_N$. 
In particular, 
$(e_{\bm{\omega}} o_{\bm{\omega}})^n$ is conjugate to 
$(\alpha_{\bm{\omega}})^n$ in $B_N$ 
for each  $n \ge 1$. 
\end{lemma}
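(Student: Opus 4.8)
The plan is to reorganize $\alpha_{\bm{\omega}}$ and $o_{\bm{\omega}}e_{\bm{\omega}}$ into products of consecutive pairs and thereby reduce the statement to reversing the order of a ``path-ordered'' product. Concretely, writing $\sim$ for conjugacy in $B_N$ and setting $M:=\lceil (N-1)/2\rceil$, for $1\le j\le M$ put $\gamma_j:=\sigma_{2j-1}^{\omega_{2j-1}}\sigma_{2j}^{\omega_{2j}}$, with the convention that the last block is the single generator $\gamma_M=\sigma_{N-1}^{\omega_{N-1}}$ when $N-1$ is odd. By definition $\alpha_{\bm{\omega}}=\gamma_1\gamma_2\cdots\gamma_M$. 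The decisive structural point is that the blocks inherit the braid commutation relations along a path: since $\gamma_j$ involves only the indices $2j-1,2j$, the relation $\sigma_i\sigma_{i'}=\sigma_{i'}\sigma_i$ for $|i-i'|>1$ yields $\gamma_j\gamma_{j'}=\gamma_{j'}\gamma_j$ whenever $|j-j'|\ge 2$, while neighboring blocks need not commute.

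First I would record two reductions. (i) The braid $e_{\bm{\omega}}o_{\bm{\omega}}$ is conjugate to $o_{\bm{\omega}}e_{\bm{\omega}}$, since $uv$ and $vu$ are always conjugate (conjugate $e_{\bm{\omega}}o_{\bm{\omega}}$ by $e_{\bm{\omega}}$, exactly as in the first step of Example~\ref{ex_N7}). (ii) I claim $o_{\bm{\omega}}e_{\bm{\omega}}=\gamma_M\gamma_{M-1}\cdots\gamma_1$ as an honest equality in $B_N$, obtained from commutations alone. This I would prove by induction on $M$ via the telescoping identity
\[
\gamma_j\gamma_{j-1}\cdots\gamma_1=\bigl(\sigma_1^{\omega_1}\sigma_3^{\omega_3}\cdots\sigma_{2j-1}^{\omega_{2j-1}}\bigr)\bigl(\sigma_2^{\omega_2}\sigma_4^{\omega_4}\cdots\sigma_{2j}^{\omega_{2j}}\bigr);
\]
to pass from $j-1$ to $j$ one slides $\sigma_{2j-1}^{\omega_{2j-1}}\sigma_{2j}^{\omega_{2j}}$ across the odd and even subproducts of lower index, all of which commute with it because the indices differ by at least $2$. (When $N-1$ is odd, the top factor $\sigma_{N-1}^{\omega_{N-1}}$ is absorbed in the same way.) Combining (i) and (ii), the lemma reduces to the single assertion $\gamma_M\gamma_{M-1}\cdots\gamma_1\sim\gamma_1\gamma_2\cdots\gamma_M$.

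The heart of the argument, and the step I expect to be the main obstacle, is this reversal of a path-ordered product. Naive attempts to reverse it two blocks at a time stall, because each block fails to commute with its immediate neighbor, so the conjugating element cannot be slid past the remainder of the word. The clean way around this is to reverse a growing prefix all at once: I would prove, by induction on $k$, that
\[
\gamma_1\gamma_2\cdots\gamma_M\ \sim\ (\gamma_k\gamma_{k-1}\cdots\gamma_1)(\gamma_{k+1}\gamma_{k+2}\cdots\gamma_M),
\]
the case $k=1$ being trivial and $k=M$ giving the desired reversal. For the inductive step one conjugates by the reversed prefix $\gamma_k\cdots\gamma_1$, turning $(\gamma_k\cdots\gamma_1)(\gamma_{k+1}\cdots\gamma_M)$ into $(\gamma_{k+1}\cdots\gamma_M)(\gamma_k\cdots\gamma_1)$; now the entire block $\gamma_{k+2}\cdots\gamma_M$ commutes with the entire block $\gamma_k\cdots\gamma_1$, since their index sets are separated by the gap at block $k+1$, so the two blocks may be interchanged, leaving exactly $(\gamma_{k+1}\gamma_k\cdots\gamma_1)(\gamma_{k+2}\cdots\gamma_M)$, which is the $k+1$ case. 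This ``gap commutation'' is precisely what the pairing into the $\gamma_j$'s buys us.

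Finally, chaining the three steps gives $e_{\bm{\omega}}o_{\bm{\omega}}\sim o_{\bm{\omega}}e_{\bm{\omega}}=\gamma_M\cdots\gamma_1\sim\gamma_1\cdots\gamma_M=\alpha_{\bm{\omega}}$, which is the first assertion. The ``in particular'' statement is then immediate: if $x=cyc^{-1}$ then $x^n=cy^nc^{-1}$, so the conjugacy of $e_{\bm{\omega}}o_{\bm{\omega}}$ and $\alpha_{\bm{\omega}}$ forces conjugacy of their $n$-th powers for every $n\ge 1$.
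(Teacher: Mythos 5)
Your proof is correct and follows essentially the same route as the paper's: the paper proves Lemma~\ref{lem_oe-alpha} by generalizing Example~\ref{ex_N7}, which likewise conjugates $e_{\bm{\omega}}o_{\bm{\omega}}$ to $o_{\bm{\omega}}e_{\bm{\omega}}$, regroups the word into consecutive two-generator blocks, and iteratively combines cyclic conjugation with the commutation relation $\sigma_i\sigma_j=\sigma_j\sigma_i$ for $|i-j|>1$. Your induction on the reversed prefix $\gamma_k\cdots\gamma_1$ is simply a cleanly formalized, fully general version of the block-by-block iteration that the paper carries out explicitly for $N=7$ and leaves to the reader for general $N$.
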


\begin{proof}
In the same manner as in Example~\ref{ex_N7}, 
one can show that 
 $ e_{\bm{\omega}} o_{\bm{\omega}}$ is conjugate to  
 $\alpha_{\bm{\omega}}$  in $B_N$. 
 Take an $N$-braid $h$ so that 
 $ e_{\bm{\omega}}o_{\bm{\omega}} = h \alpha_{\bm{\omega}} h^{-1}$. 
 Then we have  
 $( e_{\bm{\omega}} o_{\bm{\omega}})^n = (h \alpha_{\bm{\omega}} h^{-1})^n = h (\alpha_{\bm{\omega}})^n h^{-1}$. 
 This completes the proof.
\end{proof}

We define a map (anti-homomorphism) 
\begin{eqnarray*}
\mathrm{rev}: B_n &\rightarrow& B_n
\\
\sigma_{i_1}^{\mu_1} \sigma_{i_2}^{\mu_2} \cdots \sigma_{i_k}^{\mu_k} 
&\mapsto& 
\sigma_{i_k}^{\mu_k} \cdots \sigma_{i_2}^{\mu_2} \sigma_{i_1}^{\mu_1}, 
\hspace{4mm} 
\mu_j= \pm 1.
\end{eqnarray*}
The image of 
$\alpha_{\bm{\omega}} = 
\sigma_1^{\omega_1} \sigma_2^{\omega_2} \cdots \sigma_{N-1}^{\omega_{N-1}}$ 
under the map $\mathrm{rev}$ is given by 
$$\mathrm{rev}(\alpha_{\bm{\omega}}) =
\sigma_{N-1}^{\omega_{N-1}}  \cdots \sigma_2^{\omega_2} \sigma_1^{\omega_1}.$$

\begin{lemma}
\label{lem_reverse}
For each $\bm{\omega} \in \Omega_N$, 
 $\alpha_{\bm{\omega}}$ is conjugate to 
$\mathrm{rev}(\alpha_{\bm{\omega}})$ in $B_N$. 
\end{lemma}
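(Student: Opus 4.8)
The plan is to deduce the statement from Lemma~\ref{lem_oe-alpha} together with the formal properties of the anti-homomorphism $\mathrm{rev}$, rather than to manipulate the word $\sigma_1^{\omega_1}\cdots\sigma_{N-1}^{\omega_{N-1}}$ directly. The first point I would record is that $\mathrm{rev}$ is an \emph{involutive anti-automorphism} of $B_N$, and that as such it maps conjugate elements to conjugate elements: if $a = g b g^{-1}$ in $B_N$, then applying $\mathrm{rev}$ and using $\mathrm{rev}(xy)=\mathrm{rev}(y)\mathrm{rev}(x)$ and $\mathrm{rev}(g^{-1})=\mathrm{rev}(g)^{-1}$ gives $\mathrm{rev}(a)=\mathrm{rev}(g)^{-1}\,\mathrm{rev}(b)\,\mathrm{rev}(g)$, so $\mathrm{rev}(a)$ is conjugate to $\mathrm{rev}(b)$. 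Hence $\mathrm{rev}$ descends to a permutation of conjugacy classes, which is the mechanism I will use to transport Lemma~\ref{lem_oe-alpha} across the reversal.

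The key observation is that the braids $e_{\bm{\omega}}$ and $o_{\bm{\omega}}$ defined in \eqref{equation_3kinds_of_braids} are \emph{palindromic}, i.e.\ fixed by $\mathrm{rev}$. Indeed, $e_{\bm{\omega}}$ is a product of factors $\sigma_i^{\omega_i}$ with $i$ even, so any two distinct indices differ by at least $2$; by the relation $\sigma_i\sigma_j=\sigma_j\sigma_i$ for $|i-j|>1$ these factors commute pairwise, and a product of pairwise commuting elements is unchanged by any reordering, in particular by reversal. Thus $\mathrm{rev}(e_{\bm{\omega}})=e_{\bm{\omega}}$, and the same argument with odd indices gives $\mathrm{rev}(o_{\bm{\omega}})=o_{\bm{\omega}}$. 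Since $\mathrm{rev}$ is an anti-homomorphism, it follows that $\mathrm{rev}(e_{\bm{\omega}}o_{\bm{\omega}})=\mathrm{rev}(o_{\bm{\omega}})\,\mathrm{rev}(e_{\bm{\omega}})=o_{\bm{\omega}}e_{\bm{\omega}}$.

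Finally I would chain the conjugacies. By Lemma~\ref{lem_oe-alpha}, $\alpha_{\bm{\omega}}$ is conjugate to $e_{\bm{\omega}}o_{\bm{\omega}}$; applying the conjugacy-preserving map $\mathrm{rev}$ and the identity just established, $\mathrm{rev}(\alpha_{\bm{\omega}})$ is conjugate to $o_{\bm{\omega}}e_{\bm{\omega}}$. A cyclic permutation (the elementary fact that $vu=u^{-1}(uv)u$) shows $o_{\bm{\omega}}e_{\bm{\omega}}$ is conjugate to $e_{\bm{\omega}}o_{\bm{\omega}}$, which in turn is conjugate to $\alpha_{\bm{\omega}}$ by Lemma~\ref{lem_oe-alpha} again. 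Concatenating yields that $\mathrm{rev}(\alpha_{\bm{\omega}})$ is conjugate to $\alpha_{\bm{\omega}}$ in $B_N$, as desired. There is no serious obstacle here: the only thing to check carefully is the palindromicity of $e_{\bm{\omega}}$ and $o_{\bm{\omega}}$, which is immediate from the commuting relations; the rest is bookkeeping with conjugacy. (I note that this also recovers, for these particular words, the general principle that a braid is conjugate to its reverse; one could alternatively attempt a proof via conjugation by the half twist $\Delta$, which sends $\sigma_i\mapsto\sigma_{N-i}$, but that operation alone reverses the \emph{indices} while leaving the exponent pattern in place, so it does not by itself produce $\mathrm{rev}(\alpha_{\bm{\omega}})$ unless $\bm{\omega}$ is a palindrome, and the route through Lemma~\ref{lem_oe-alpha} is cleaner.)
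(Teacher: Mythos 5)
Your proof is correct, and it takes a genuinely different route from the paper's. The paper proves Lemma~\ref{lem_reverse} by direct word manipulation: it conjugates $\mathrm{rev}(\alpha_{\bm{\omega}})$ back into $\alpha_{\bm{\omega}}$ step by step using the relation $\sigma_i\sigma_j=\sigma_j\sigma_i$ for $|i-j|>1$, carries this out explicitly only for $N=3$ and $N=4$, and leaves the general case to the reader. You instead deduce the lemma formally from Lemma~\ref{lem_oe-alpha}: the map $\mathrm{rev}$ is an anti-automorphism and hence carries conjugate elements to conjugate elements; the factors of $e_{\bm{\omega}}$ (resp.\ $o_{\bm{\omega}}$) have indices differing by at least $2$, so they commute pairwise and both braids are fixed by $\mathrm{rev}$; therefore $\mathrm{rev}(\alpha_{\bm{\omega}})$ is conjugate to $\mathrm{rev}(e_{\bm{\omega}}o_{\bm{\omega}})=o_{\bm{\omega}}e_{\bm{\omega}}$, which is conjugate (by cycling) to $e_{\bm{\omega}}o_{\bm{\omega}}$, which is conjugate to $\alpha_{\bm{\omega}}$. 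This is legitimate and non-circular, since Lemma~\ref{lem_oe-alpha} precedes Lemma~\ref{lem_reverse} and its proof does not invoke it. What your route buys is a complete, uniform argument for every $N$, with all word-level computation quarantined inside Lemma~\ref{lem_oe-alpha}; it removes precisely the ``we leave the rest of the proof to the reader'' step. What the paper's route buys is independence: its computation for Lemma~\ref{lem_reverse} does not lean on Lemma~\ref{lem_oe-alpha}, at the cost of repeating the same style of conjugation moves and of being only sketched in general. Your closing caveat about the half twist is also accurate: since $\Delta\sigma_i\Delta^{-1}=\sigma_{N-i}$, conjugation by $\Delta$ sends $\alpha_{\bm{\omega}}$ to $\mathrm{rev}(\alpha_{\widehat{\bm{\omega}}})$ rather than to $\mathrm{rev}(\alpha_{\bm{\omega}})$, so that shortcut works only for palindromic $\bm{\omega}$; this is consistent with how the paper uses $\Delta$ in the proof of Lemma~\ref{lem_stretchfactor-equivalent}, where the inverse is taken first.
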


\begin{proof}
Suppose that $N=3$. 
For each $\bm{\omega} \in \Omega_3$,
we have $\alpha_{\bm{\omega}}= \sigma_1^{\omega_1} \sigma_2^{\omega_2}$ 
and 
$\mathrm{rev}(\alpha_{\bm{\omega}})= \sigma_2^{\omega_2} \sigma_1^{\omega_1}$. 
Clearly, the statement follows in this case. 

Suppose that $N=4$. 
For each   $\bm{\omega} \in \Omega_4$, 
we have 
$\alpha_{\bm{\omega}}= 
\sigma_1^{\omega_1} \sigma_2^{\omega_2} \sigma_3^{\omega_3}$ 
and 
$\mathrm{rev}(\alpha_{\bm{\omega}})= 
\sigma_3^{\omega_3} \sigma_2^{\omega_2} \sigma_1^{\omega_1}$. 
First, we consider the braid 
$b_0:= \sigma_3^{- \omega_3} 
(\mathrm{rev}(\alpha_{\bm{\omega}})) 
\sigma_3^{\omega_3} $. 
Then we have 
$$b_0= \sigma_3^{- \omega_3} 
(\sigma_3^{\omega_3} \sigma_2^{\omega_2} \sigma_1^{\omega_1}) 
\sigma_3^{\omega_3} 
= \sigma_2^{\omega_2} \sigma_1^{\omega_1} \sigma_3^{\omega_3} 
=  \sigma_2^{\omega_2} \sigma_3^{\omega_3}\sigma_1^{\omega_1}. 
$$
Next, we consider the braid 
$b_1:=(\sigma_2^{\omega_2} \sigma_3^{\omega_3})^{-1} b_0 
(\sigma_2^{\omega_2} \sigma_3^{\omega_3}) $. 
It is  written by 
$b_1= \sigma_1^{\omega_1} \sigma_2^{\omega_2} \sigma_3^{\omega_3} 
= \alpha_{\bm{\omega}}$. 
Hence $\mathrm{rev}(\alpha_{\bm{\omega}})$, $b_0$ and $\alpha_{\bm{\omega}}$ 
are conjugate to each other in $B_4$. 

In the same argument as above, 
one can verify the statement  of general $N$. 
We leave the rest of the proof to the reader. 
\end{proof}

We turn to the definition of the braid $\beta_{\bm{m}}$.  
Let $\bm{m}= (m_1, \dots, m_{k+1})$ 
be a $(k+1)$-tuple of positive integers with $k \ge 0$. 
Let $\beta_{\bm{m}}= \beta_{(m_1, \dots, m_{k+1})}$ denote the braid  
with $(1+ \sum_{i=1}^{k+1} m_i)$ strands 
as in Figure~\ref{fig_general_braid}. 
If we set $N:= 1+ \sum_{i=1}^{k+1} m_i$, then 
$\sum_{i=1}^{k+1} m_i= N-1$ and 
$\bm{m}$ is a composition of the integer $N-1$. 

Observe that if $k=0$ and $\bm{m}= (N-1) \in \Psi_{N-1}$, then 
$$\beta_{\bm{m}} = \beta_{(N-1)} = \sigma_1 \sigma_2 \cdots \sigma_{N-1} .$$
Here are some examples  written by Artin generators:   
$\beta_{(3,2)}= \sigma_1 \sigma_2 \sigma_3 \sigma_4^{-1} \sigma_5^{-1} 
\in B_6$,  
$\beta_{(1,1,1,1,1)}= \sigma_1 \sigma_2^{-1} \sigma_3 \sigma_4^{-1} \sigma_5 
\in B_6$. 
See Figure~\ref{fig_general_braid}. 

\begin{lemma}
\label{lem_periodic-braid}
If $\bm{m}= (N-1) \in \Psi_{N-1}$, then 
$\beta_{\bm{m}}$ is a periodic braid. 
\end{lemma}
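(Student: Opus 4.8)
The plan is to identify $\beta_{\bm{m}}$ explicitly and then produce a power of it that lands in the kernel of the maps $\Gamma$ and $\mathfrak{c}$ from Section~\ref{section_preliminaries}. By the observation recorded immediately before the statement, when $\bm{m}=(N-1)$ we have $\beta_{\bm{m}} = \sigma_1 \sigma_2 \cdots \sigma_{N-1}$; write $\delta := \sigma_1 \sigma_2 \cdots \sigma_{N-1} \in B_N$. By the definition in Section~\ref{subsection_Nielsen-Thurston}, to show that $\delta$ is a periodic braid it suffices to show that the mapping class $\mathfrak{c}(\Gamma(\delta)) \in \mathrm{MCG}(\Sigma_{0,N+1})$ has finite order.

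The key step is the classical identity $\delta^N = \Delta^2$ in $B_N$, where $\Delta$ is the half twist of Section~\ref{subsection_braid-mappingclass}. I would either cite this from \cite{Birman74} or verify it directly by induction on $N$ using the braid relations. Granting it, the conclusion is immediate and purely formal: since $\Delta^2$ generates the center $Z(B_N)$, and $Z(B_N)$ is exactly the kernel of $\Gamma : B_N \rightarrow \mathrm{MCG}(D_N)$, we obtain $\Gamma(\delta)^N = \Gamma(\delta^N) = \Gamma(\Delta^2) = \mathrm{id}$ in $\mathrm{MCG}(D_N)$. Applying the homomorphism $\mathfrak{c}$ then gives $\bigl(\mathfrak{c}(\Gamma(\delta))\bigr)^N = \mathrm{id}$ in $\mathrm{MCG}(\Sigma_{0,N+1})$, so $\mathfrak{c}(\Gamma(\delta))$ has order dividing $N$ and is therefore periodic, as required. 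As a sanity check consistent with this computation, one may realize the $N+1$ punctures of $\Sigma_{0,N+1}$ with the $N$ disk punctures placed symmetrically around a circle and the puncture $\infty$ (from collapsing $\partial D_N$) at a pole; then $\delta$ is represented by the rigid rotation through angle $2\pi/N$, which fixes $\infty$ and cyclically permutes the remaining punctures, manifestly of finite order.

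The only genuine obstacle is justifying the identity $\delta^N = \Delta^2$; everything else follows formally from facts already assembled in Section~\ref{section_preliminaries}, namely that $Z(B_N) = \langle \Delta^2 \rangle = \ker \Gamma$ and that $\mathfrak{c}$ is a homomorphism. Since this identity is standard and $\Delta$, $\Delta^2$, and $Z(B_N)$ have all been introduced already, I expect this to amount to a short citation or a brief inductive remark rather than a real difficulty, and I would keep the write-up correspondingly short.
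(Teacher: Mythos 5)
Your proposal is correct and follows essentially the same route as the paper: both rest on the identity $(\sigma_1\sigma_2\cdots\sigma_{N-1})^N=\Delta^2$ together with the fact that $\Delta^2$ generates $Z(B_N)=\ker\Gamma$, so the induced mapping class has finite order. Your write-up is slightly more explicit than the paper's (which compresses this to one line in $B_N/Z(B_N)$), in particular in carrying the conclusion through $\mathfrak{c}$ to $\mathrm{MCG}(\Sigma_{0,N+1})$, but the underlying argument is identical.
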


\begin{proof}
If $\bm{m}= (N-1)$, then 
$(\beta_{\bm{m}})^N=(\sigma_1 \sigma_2 \cdots \sigma_{N-1})^N $ 
equals the full twist $\Delta^2$ corresponding to  the identity element in the group $B_N/Z(B_N)$. 
Hence $\beta_{\bm{m}}$ is a periodic braid. 
\end{proof}

\begin{figure}[htbp]
\begin{center}
\includegraphics[width=4.7in]{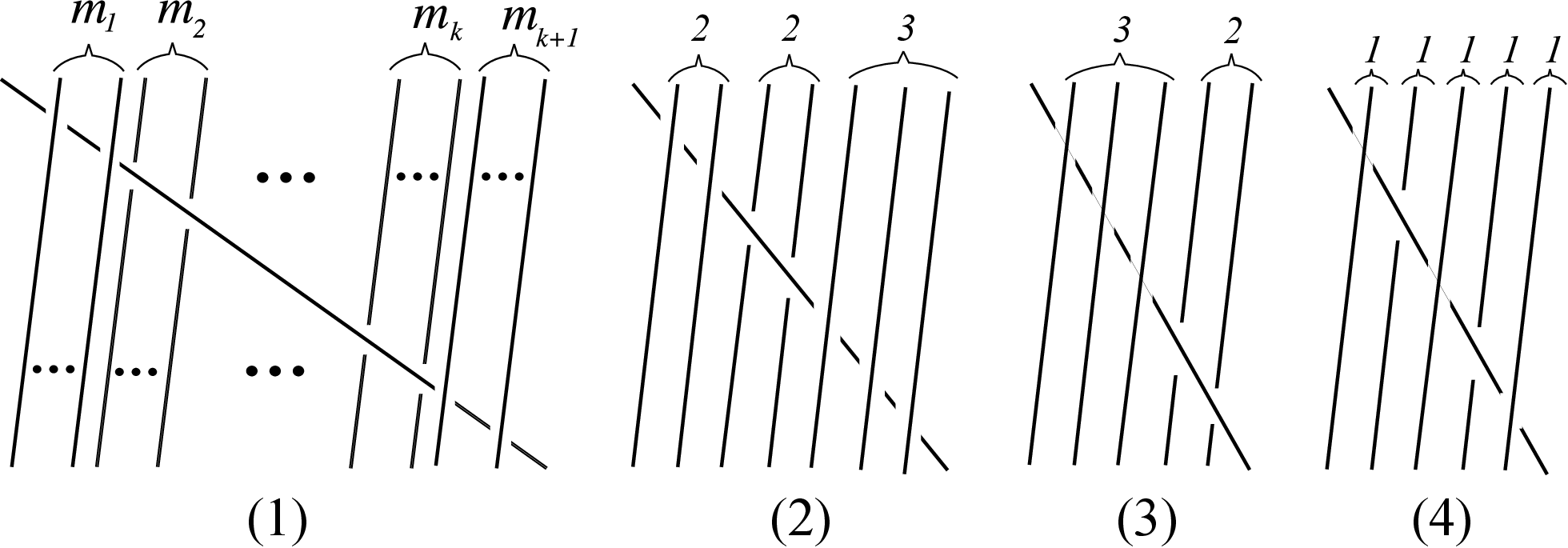}
\caption{(1) $\beta_{\bm{m}}=\beta_{(m_1, \dots, m_{k+1})}$. 
(2) $\beta_{(2,2,3)}$. (3) $\beta_{(3,2)}$. 
(4) $\beta_{(1,1,1,1,1)}$.}
\label{fig_general_braid}
\end{center}\
\end{figure}

\subsection{Stretch factors of pseudo-Anosov braids}
\label{subsection_strech-factors}
In this section, we study the stretch factor of the braid $\beta_{\bm{m}}$ 
when it is of pseudo-Anosov type. 
To do this, let $f(t)$ be an integral polynomial of degree $d$. 
The {\it reciprocal} of $f(t)$, denoted by $f_*(t)$, is defined by  
\begin{equation}
\label{equation_reciprocal} 
f_*(t)= t^d f(\tfrac{1}{t}).
\end{equation}
The following result gives a recursive formula for 
the stretch factor of $\beta_{\bm{m}}$. 

\begin{theorem}
\label{thm_recursive-example}
Let $\bm{m}= (m_1, \dots, m_{k+1})$ 
be a $(k+1)$-tuple of positive integers. 
If $k >0$, then 
the braid $\beta_{\bm{m}}$ is pseudo-Anosov. 
The stretch factor $\lambda_{\bm{m}}= \lambda_{(m_1, \dots, m_{k+1})}$ 
of $\beta_{\bm{m}}$ 
is the largest real root of the polynomial 
$$ F_{\bm{m}}(t)=F_{(m_1, \dots, m_{k+1})}(t):= 
t^{m_{k+1}} R_{(m_1, \dots, m_k)}(t)+ (-1)^{k+1} {R_{(m_1, \dots, m_k)}}_*(t),$$
where $R_{(m_1, \dots, m_i)}(t)$ is defined recursively as follows: 
\begin{eqnarray*}
R_{(m_1)}(t) &:=& t^{m_1+1} (t-1) -2t, \\
R_{(m_1, \dots, m_i)}(t)&:=& 
t^{m_i} (t-1) R_{(m_1, \dots, m_{i-1})}(t)+ 
(-1)^i 2t {R_{(m_1, \dots, m_{i-1})}}_*(t). 
\end{eqnarray*}
 \end{theorem}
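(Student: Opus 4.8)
The plan is to realize the stretch factor $\lambda_{\bm{m}}$ as the Perron-Frobenius eigenvalue of the transition matrix of an explicit invariant train track for the mapping class $\mathfrak{c}(\Gamma(\beta_{\bm{m}}))$ on the $(N+1)$-punctured sphere $\Sigma_{0,N+1}$, where $N = 1 + \sum_i m_i$. First I would observe that $\beta_{\bm{m}}$ is precisely $\alpha_{\bm{\omega}}$ for $\bm{\omega} = \Theta(\bm{m})$ (compare the definitions with Lemma~\ref{lem_bijection}), so that Lemma~\ref{lem_oe-alpha} lets me work instead with the conjugate braid $e_{\bm{\omega}} o_{\bm{\omega}}$. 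The point of this splitting is that $e_{\bm{\omega}}$ and $o_{\bm{\omega}}$ are each products of half-twists with pairwise non-adjacent (hence disjoint) supports, so they twist along two disjoint arc systems that jointly fill $\Sigma_{0,N+1}$; this filling property furnishes a bigon-free candidate train track $\tau$, laid out with the $N$ punctures along the $x$-axis, one small loop at each puncture, and a horizontal ``real'' branch across each gap whose local cusp model is the standard half-twist picture where $\omega_i = +1$ and its mirror where $\omega_i = -1$. In the fully alternating extreme $\bm{m} = \bm{1}_{N-1}$ this configuration is literally a Thurston-Penner pair, which is reassuring, but for general $\bm{m}$ the signs within $e_{\bm{\omega}}$ and $o_{\bm{\omega}}$ are mixed and one must treat $\tau$ directly.

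Once $\tau$ is fixed, the core computation is to verify that $\mathfrak{c}(\Gamma(\beta_{\bm{m}}))(\tau)$ is carried by $\tau$ and to record the transition matrix $M_{\bm{m}}$ counting how often the image of each branch runs over each branch. I would group the branches into $k+1$ consecutive blocks of sizes $m_1, \dots, m_{k+1}$ dictated by the composition, so that $M_{\bm{m}}$ is block-bidiagonal: each half-twist only couples a branch to its neighbors, and a sign change $\omega_{i+1} = -\omega_i$ — equivalently, a new block in $\bm{m}$ — is exactly where the character of the adjacent-block coupling switches. Verifying that $M_{\bm{m}}$ is Perron-Frobenius (underlying branch graph strongly connected and primitive, with spectral radius strictly above $1$) then does double duty: by the standard Bestvina-Handel criterion it certifies that $\beta_{\bm{m}}$ is pseudo-Anosov, and it identifies $\lambda_{\bm{m}} = \lambda(M_{\bm{m}})$. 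The hypothesis $k>0$ is genuinely needed here: for $k=0$ one has $\beta_{\bm{m}} = \sigma_1 \cdots \sigma_{N-1}$, which is periodic by Lemma~\ref{lem_periodic-braid}, and correspondingly $M_{\bm{m}}$ degenerates to an imprimitive (permutation-type) matrix of spectral radius $1$.

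The recursive formula then emerges from a determinant expansion. Because $M_{\bm{m}}$ is block-bidiagonal and each inter-block coupling goes through a pair of connecting branches, the relevant characteristic data is carried not by a single polynomial but by the pair $(R_{(m_1,\dots,m_i)}, {R_{(m_1,\dots,m_i)}}_*)$, transformed by a $2 \times 2$ transfer matrix as one appends a block. I would prove by induction on $i$ that expanding $\det(tI - M_{\bm{m}})$ through the first $i$ blocks produces exactly $R_{(m_1,\dots,m_i)}(t)$, with the base case $R_{(m_1)}(t) = t^{m_1+1}(t-1) - 2t$ coming from the single-block corner (the factor $2t$ reflecting the two connecting branches), and the inductive step $R_{(m_1,\dots,m_i)} = t^{m_i}(t-1)R_{(m_1,\dots,m_{i-1})} + (-1)^i 2t\,{R_{(m_1,\dots,m_{i-1})}}_*$ arising from cofactor expansion along those two branches. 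The reciprocal terms ${R_\bullet}_*$ are not accidental: they encode the left-right flip symmetry of the configuration, whose combinatorial avatar is the conjugacy $\beta_{\bm{m}} \sim \mathrm{rev}(\beta_{\bm{m}})$ of Lemma~\ref{lem_reverse} and which makes $F_{\bm{m}}$ (anti)reciprocal. Collapsing the last block yields $F_{\bm{m}}(t) = t^{m_{k+1}} R_{(m_1,\dots,m_k)}(t) + (-1)^{k+1}{R_{(m_1,\dots,m_k)}}_*(t)$ as the characteristic polynomial of $M_{\bm{m}}$ up to a power of $t$ and cyclotomic factors from capping the boundary and the puncture at infinity; since $\lambda(M_{\bm{m}})$ is the unique root of largest modulus, $\lambda_{\bm{m}}$ is the largest real root of $F_{\bm{m}}$.

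The step I expect to be the main obstacle is the second one: rigorously verifying that the chosen $\tau$ is invariant and extracting $M_{\bm{m}}$ correctly. The bookkeeping of how each $\sigma_i^{\omega_i}$ folds the branches depends both on the sign $\omega_i$ and on whether the gap sits at a block boundary, and one must exclude hidden bigons and the existence of an invariant essential multicurve that would force reducibility rather than pseudo-Anosovness. Within that verification, proving \emph{primitivity} of the branch graph (so that $M_{\bm{m}}$ is truly Perron-Frobenius, not merely irreducible with a cyclic top eigenvalue) is the delicate point, and it is precisely where the hypothesis $k>0$ — the presence of at least one sign change in $\bm{\omega} = \Theta(\bm{m})$ — must be invoked.
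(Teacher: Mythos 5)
Your proposal outlines the right general strategy, but it is a roadmap rather than a proof, and the places where you defer work are precisely where the content of the theorem lies. Note first that the paper itself does not prove this statement: it cites Proposition~4.1 and Theorem~1.2 of \cite{KinTakasawa08}, and its subsequent ``How to compute $\lambda_{\bm{m}}$'' discussion is an exposition of that machinery in which the two hard facts --- that the graph map $g_{\bm{m}}$ is efficient and that the transition matrix $M_{\bm{m}}$ is Perron--Frobenius --- are again imported from \cite{KinTakasawa08}. Your plan follows the same Bestvina--Handel template (invariant combinatorial model, transition matrix, Perron--Frobenius eigenvalue), so in spirit you are reconstructing the cited argument; but you never exhibit the train track $\tau$, never verify that $\mathfrak{c}(\Gamma(\beta_{\bm{m}}))(\tau)$ is carried by $\tau$, never rule out bigons or an invariant essential multicurve, and never prove primitivity --- you explicitly flag all of these as ``the main obstacle.'' Since a Perron--Frobenius transition matrix alone does not certify pseudo-Anosovness (one also needs the carrying to be efficient and the track to fill), the first assertion of the theorem, that $\beta_{\bm{m}}$ is pseudo-Anosov for $k>0$, remains unproven in your write-up.

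The second gap is the recursion itself, which is the actual statement to be proven. Your justification --- block-bidiagonal structure, cofactor expansion along two connecting branches, a $2\times 2$ transfer matrix acting on the pair $(R, R_*)$ --- is asserted, not derived: no matrix is written down, no determinant is computed, and the claim that the $2t$ in $R_{(m_1)}$ ``reflects the two connecting branches'' is a plausible guess rather than an argument. There is also a structural mismatch in your setup: you factor the braid through the even/odd splitting of Lemma~\ref{lem_oe-alpha} (the splitting the paper uses to read braid words off choreographies in Theorem~\ref{thm_braid-type-omega}), whereas the recursion you must prove is indexed by the blocks of the composition $\bm{m}$. The paper's exposition, following \cite{KinTakasawa08}, instead decomposes $\phi_{\bm{m}} = \phi_{k+1}\circ \cdots \circ \phi_1$ into rotations of the blocks $X_i$ of marked points, so that $M_{\bm{m}} = M_{k+1}\cdots M_1$ is a product of one matrix per block; this blockwise factorization is exactly what makes an induction on appending a block (i.e., the passage from $R_{(m_1,\dots,m_{i-1})}$ to $R_{(m_1,\dots,m_i)}$) tractable. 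To complete your argument you would either have to redo the splitting blockwise or explain how the even/odd factorization produces a block-indexed recursion; as written, neither is done.
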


See Proposition 4.1, Theorem 1.2 in \cite{KinTakasawa08} 
for the proof of Theorem~\ref{thm_recursive-example}. 
The next lemma explains a relation between  braids 
$\alpha_{\bm{\omega}}$ (see (\ref{equation_alpha})) and $\beta_{\bm{m}}$.

\begin{lemma}
\label{lem_alpha-beta}
Let $\Theta: \Psi_{N-1} \rightarrow \Omega_N^+$ be the bijection 
as in Lemma~\ref{lem_bijection}. 
Then the identity  
$\beta_{\bm{m}}= \alpha_{\Theta(\bm{m})}$ holds 
for each $\bm{m} \in \Psi_{N-1}$. 
Moreover, $\beta_{\bm{m}}$ is pseudo-Anosov 
if and only if $\bm{m} \ne (N-1) $. 
\end{lemma}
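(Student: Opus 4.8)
The plan is to prove Lemma~\ref{lem_alpha-beta} in two parts. The first identity $\beta_{\bm{m}}= \alpha_{\Theta(\bm{m})}$ should follow by directly comparing the two braid words, since both are products of the Artin generators $\sigma_1^{\epsilon_1}\cdots\sigma_{N-1}^{\epsilon_{N-1}}$ in increasing index order. I would first recall that for $\bm{m}= (m_1, \dots, m_{k+1})$ the braid $\beta_{\bm{m}}$ (read off from Figure~\ref{fig_general_braid}) has the $i$th crossing positive or negative according to which ``block'' $m_j$ the index $i$ falls into, with the sign alternating from block to block and the first block positive. That is precisely the sign pattern
$$\Theta(\bm{m}) = (\underbrace{1,\dots, 1}_{m_1},
\underbrace{-1,\dots, -1}_{m_2}, \dots,
\underbrace{(-1)^{k},\dots, (-1)^{k}}_{m_{k+1}})$$
recorded in the proof of Lemma~\ref{lem_bijection}. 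So I would argue that $\beta_{\bm{m}}= \sigma_1^{\omega_1}\cdots\sigma_{N-1}^{\omega_{N-1}}$ where $\bm{\omega}= \Theta(\bm{m})$, which is exactly $\alpha_{\Theta(\bm{m})}$ by the definition~(\ref{equation_alpha}).

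For the ``if'' direction of the equivalence, suppose $\bm{m} \ne (N-1)$. Since $\bm{m}= (m_1,\dots,m_{k+1})$ with $\sum m_i = N-1$, the condition $\bm{m} \ne (N-1)$ is equivalent to $k > 0$. Then Theorem~\ref{thm_recursive-example} directly gives that $\beta_{\bm{m}}$ is pseudo-Anosov, so this direction is immediate.

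For the ``only if'' direction, I would prove the contrapositive: if $\bm{m}= (N-1)$ then $\beta_{\bm{m}}$ is \emph{not} pseudo-Anosov. This is exactly the content of Lemma~\ref{lem_periodic-braid}, which shows $\beta_{(N-1)}= \sigma_1\sigma_2\cdots\sigma_{N-1}$ is periodic; a periodic braid cannot be pseudo-Anosov, since the Nielsen-Thurston types are mutually exclusive. Combining both directions yields the claimed equivalence.

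The step I expect to require the most care is the first identity, specifically the bookkeeping that the sign pattern of the crossings in $\beta_{\bm{m}}$ as drawn in Figure~\ref{fig_general_braid} matches the alternating-block pattern $\Theta(\bm{m})$. The pseudo-Anosov equivalence itself is essentially a repackaging of the two earlier lemmas (Theorem~\ref{thm_recursive-example} and Lemma~\ref{lem_periodic-braid}) once one observes that $\bm{m}= (N-1)$ is the same as $k=0$; the only genuine content is confirming that the geometric braid $\beta_{\bm{m}}$ is algebraically the word $\alpha_{\Theta(\bm{m})}$, after which everything reduces to results already established in the excerpt.
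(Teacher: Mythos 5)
Your proposal is correct and follows essentially the same route as the paper: the identity $\beta_{\bm{m}}=\alpha_{\Theta(\bm{m})}$ is a direct comparison of the braid word with the block sign pattern defining $\Theta$, and the pseudo-Anosov characterization is exactly the combination of Lemma~\ref{lem_periodic-braid} (for $\bm{m}=(N-1)$, i.e.\ $k=0$) with Theorem~\ref{thm_recursive-example} (for $k>0$). One cosmetic remark: rather than saying the Nielsen--Thurston types are ``mutually exclusive'' (periodic and reducible can in fact coincide, as the paper notes), it is cleaner to say that a pseudo-Anosov class is by definition neither periodic nor reducible, so a periodic braid cannot be pseudo-Anosov.
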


\begin{proof}
The former statement is immediate by the definitions of 
$\alpha_{\bm{\omega}}$ and $\beta_{\bm{m}}$. 
For the latter statement, 
we represent a composition 
$\bm{m} \in \Psi_{N-1}$ by a $(k+1)$-tuple $\bm{m}= (m_1, \dots, m_{k+1})$ 
of positive integers with $k \ge 0$ and $N-1= \sum_{i=1}^{k+1}m_i$. 
By Lemma~\ref{lem_periodic-braid} and Theorem~\ref{thm_recursive-example}, 
$\beta_{\bm{m}}$ is pseudo-Anosov if and only if $k>0$, equivalently 
$\bm{m} \ne (N-1)$. 
\end{proof}

For convenience of the readers, we explain how to compute the stretch factor 
$\lambda_{\bm{m}}$ of the pseudo-Anosov braid $\beta_{\bm{m}}$. 
For more details, see \cite{KinTakasawa08}.

\begin{proof}[How to compute  $\lambda_{\bm{m}}$]
We take a composition $\bm{m}= (m_1,  \dots, m_{k+1}) 
\in \Psi_{N-1}$ with $k>0$. 
Theorem~\ref{thm_recursive-example} tells us that 
$\mathfrak{c}(\Gamma(\beta_{\bm{m}})) \in \mathrm{MCG}(\Sigma_{0,N+1})$ 
is a pseudo-Anosov mapping class. 
We identify $\beta_{\bm{m}}$ with 
$\mathfrak{c}(\Gamma(\beta_{\bm{m}}))$. 
We view  an $N+1$-punctured sphere 
$\Sigma_{0, N+1}$ as a sphere with $m_i+1$ marked points $X_i$ 
circling an unmarked point  $u_i$ for each $i \in \{1, \dots, k+1\}$ 
and a single marked point $\infty$ (corresponding to $\partial D$). 
Note that 
$|X_i|= 1+ m_i$ for each $i \in \{1, \dots, k+1\}$ and 
$|X_j \cap X_{j+1}|= 1$ for each $j \in \{1, \dots, k\}$. 
See Figure~\ref{fig_marked-point}(1).

We choose a finite graph
$G_{\bm{m}} \subset \Sigma_{0, N+1}$ that is homotopy equivalent to
the $N+1$-punctured sphere. 
The graph $G_{\bm{m}}$ has $N$ loop edges, 
each of which encircles a puncture (a marked point), 
and $N+k$ non-loop edges. 
See Figure~\ref{fig_marked-point}(2). 
Let $P$ be the set of $N$ loop edges of $G_{\bm{m}}$. 
The graph $G_{\bm{m}}$ has $N+k+1$ vertices. 
For each loop edge, there is a vertex of degree $3$ or $4$. 
The unmarked point $u_i$ corresponds to a vertex of degree $1+ m_i$ 
for $i \in \{1, \dots, k+1\}$. 

\begin{figure}[htbp]
\begin{center}
\includegraphics[width=3.7in]{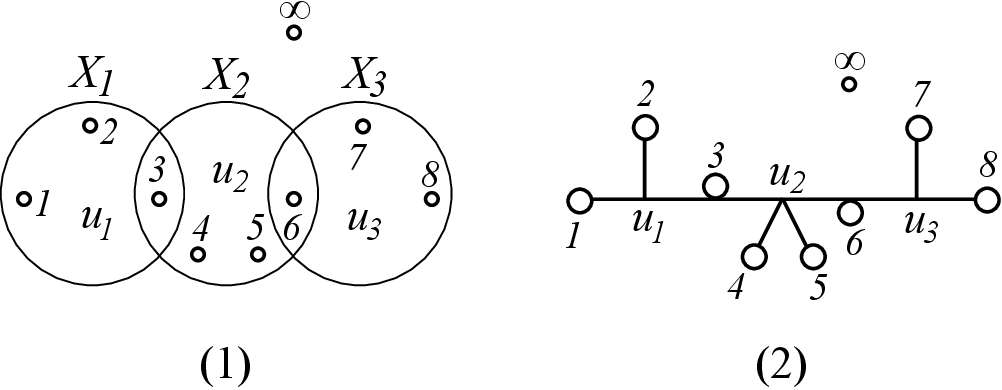}
\caption{Case $N=8$, $k=2$ and $\bm{m}= (m_1, m_2, m_3)=
(2,3,2) \in \Psi_{7}$. 
(1) $N+1$-marked points in the sphere. 
(Small circles indicate marked points (punctures).) 
(2) The graph $G_{\bm{m}}$. 
(Each loop edge encircles a marked point. 
In this case, $G_{\bm{m}}$ has $8$ loop edges and $10$ non-loop edges.)}
\label{fig_marked-point}
\end{center}
\end{figure}

Given a mapping class $\psi \in \mathrm{MCG}(\Sigma_{0,N+1})$, 
one can pick an induced graph map 
$g: G_{\bm{m}} \rightarrow G_{\bm{m}}$
(see \cite[Section~1]{BestvinaHandel95}). 
We require that $g$ sends vertices to vertices, edges to edge paths and 
satisfies $g(P)= P$. 
We may suppose that $g$ has \textit{no backtracks}, i.e., 
$g$ maps each oriented edge of $G_{\bm{m}} $ to an edge path which does not contain an oriented edge $e$ 
followed by the reverse edge $\overline{e}$ of 
the oriented edge $e$. 
Then the graph map $g$ defines an $N+k$ by $N+k$ \textit{transition matrix} 
$M$ with respect to the $N+k$ non-loop edges. 
More precisely, 
for $r,s \in \{1, \dots, N+k\}$ 
the $rs$-entry $M_{rs}$ is the number of times that the $g$-image 
of the $s$th edge  runs the $r$th  edge in either direction. 
We say that 
$g: G_{\bm{m}} \rightarrow G_{\bm{m}}$ is  \textit{efficient} 
if $g^n: G_{\bm{m}} \rightarrow G_{\bm{m}}$ has no backtracks for all $n  > 0$. 

We now define  $\phi_{\bm{m}} \in \mathrm{MCG}(\Sigma_{0, N+1})$. 
We will see in (\ref{equation_claim}) that 
$\beta_{\bm{m}}$ and $\phi_{\bm{m}}$ are conjugate in 
$\mathrm{MCG}(\Sigma_{0, N+1})$. 
Let $f_i= f_{\bm{m}, i}: \Sigma_{0, N+1} \rightarrow \Sigma_{0, N+1}$ 
be a homeomorphism such that 
$f_i$ rotates the marked points of $X_i$ counterclockwise around $u_i$ 
if $i$ is odd, 
and $f_i$ rotates the marked points of $X_i$ clockwise around $u_i$ 
if $i$ is even. 
See Figure~\ref{fig_graph-map}(1)(2). 
Define $\phi_{i} = [f_i] \in \mathrm{MCG}(\Sigma_{0, N+1})$. 
Figure~\ref{fig_graph-map}(3)(4) 
illustrates the $N$-braid $b_i= b_{\bm{m}, i}$ 
such that $\phi_i= \mathfrak{c}(\Gamma(b_i))$.  
We set 
$\phi_{\bm{m}}= \phi_{k+1} \circ \cdots \circ \phi_2 \circ \phi_1 $. 
(c.f. \cite[Figure~3]{KinTakasawa08} for $\phi_{\bm{m}}= \phi_{(4,2,1)}$.) 
Recall that 
$\mathrm{rev}: B_n \rightarrow B_n$ is the map as in Section~\ref{subsection_pAfamily}. 
By definitions of  $f_i$ and  $\beta_{\bm{m}}$ 
(see Figure~\ref{fig_general_braid}), 
we can verify that $\phi_{\bm{m}} $ is of the form 
$$\phi_{\bm{m}} 
= \mathrm{rev}(\beta_{\bm{m}}) \in \mathrm{MCG}(\Sigma_{0, N+1}).$$
For example, 
when $N=5$, $k=1$ and $\bm{m}= (2,2) \in \Phi_{4}$, 
we have 
$$\phi_{\bm{m}}= \phi_2 \circ \phi_1 
= \sigma_4^{-1} \sigma_3^{-1} \cdot \sigma_2 \sigma_1 
= \mathrm{rev}(\sigma_1 \sigma_2 \sigma_3^{-1} \sigma_4^{-1}) = \mathrm{rev}(\beta_{(2,2)}).$$
Let $\Theta: \Psi_{N-1} \rightarrow \Omega_N^+$ be the bijection 
as in Lemma~\ref{lem_bijection}. 
Then $\beta_{\bm{m}}= \alpha_{\Theta(\bm{m})}$ by Lemma~\ref{lem_alpha-beta}. 
Moreover $\alpha_{\Theta(\bm{m})}$ and $\mathrm{rev}(\alpha_{\Theta(\bm{m})})$ 
are conjugate  in $B_N$ by Lemma~\ref{lem_reverse}. 
As a consequence, one sees that 
\begin{equation}
\label{equation_claim}
\phi_{\bm{m}} (= \mathrm{rev}(\beta_{\bm{m}}) = 
\mathrm{rev}(\alpha_{\Theta(\bm{m})}))\ 
\mbox{and}\ 
\beta_{\bm{m}}  \ 
\mbox{are conjugate in}\ \mathrm{MCG}(\Sigma_{0, N+1}). 
\end{equation}

\begin{figure}[htbp]
\begin{center}
\includegraphics[width=4.2in]{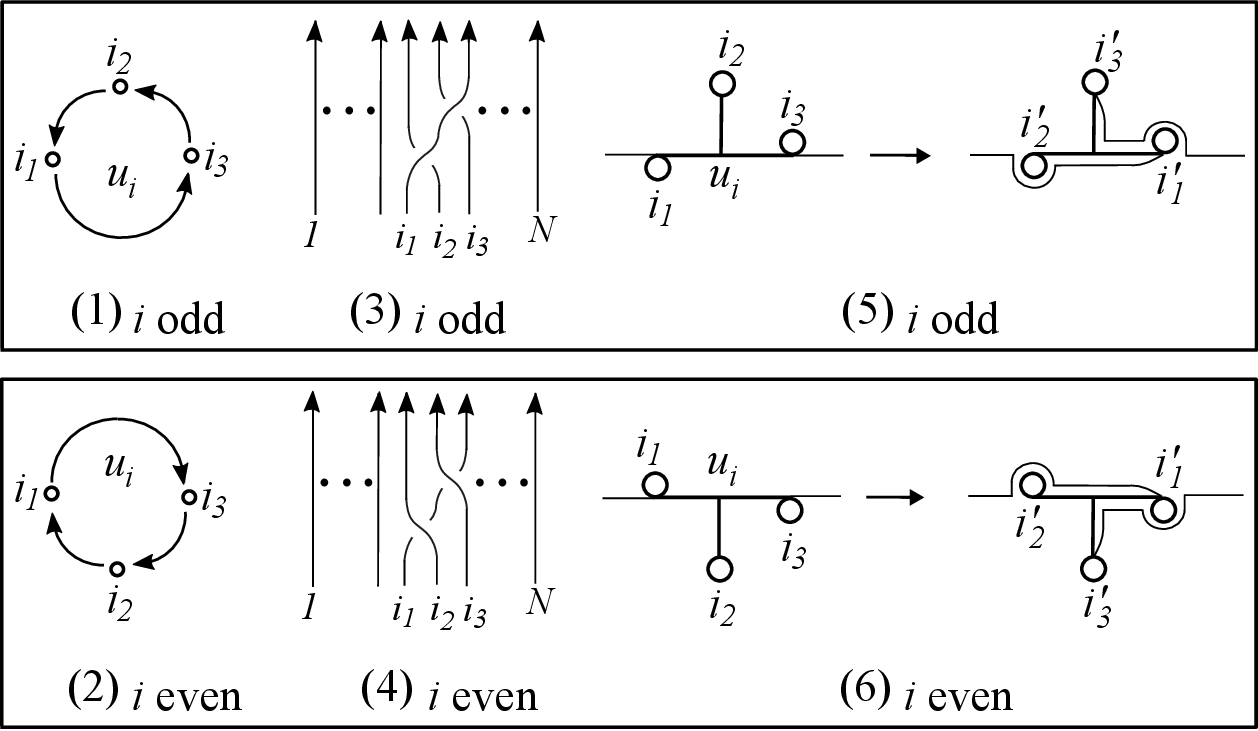}
\caption{
Case $m_i=2$. 
(1)(2) 
$f_i: \Sigma_{0, N+1} \rightarrow \Sigma_{0, N+1}$ 
when $i$ is odd/even. 
(3)(4) The braid $b_i$ corresponding to $\phi_i$ 
when $i$ is odd/even. 
(5)(6) $g_i: G_{\bm{m}} \rightarrow G_{\bm{m}}$ 
when $i$ is odd/even.}
\label{fig_graph-map}
\end{center}
\end{figure}

The mapping class $\phi_i= [f_i]$ for $i \in \{1, \dots, k+1\}$ 
induces a graph map $g_i= g_{\bm{m},i}: G_{\bm{m}} \rightarrow G_{\bm{m}}$ 
which has no backtracks as shown in Figure~\ref{fig_graph-map}(5)(6). 
We denote by $M_i = M_{\bm{m}, i}$, the transition matrix of $g_i$ 
(with respect to the $N+k$ non-loop edges). 
The composition 
\begin{equation}
\label{equation_graphmap}
g_{\bm{m}}:= g_{k+1} \circ \cdots \circ g_2 \circ g_1: 
G_{\bm{m}} \rightarrow G_{\bm{m}}
\end{equation}
\if0
$g_{\bm{m}}:= g_{k+1} \circ \cdots \circ g_2 \circ g_1: 
G_{\bm{m}} \rightarrow G_{\bm{m}}$ 
\fi
is an induced graph map of  $\phi_{\bm{m}}$. 
By the induction on $k$, one can show that 
$g_{\bm{m}}:G_{\bm{m}} \rightarrow G_{\bm{m}} $ has no backtracks. 
This implies that 
the transition matrix of $g_{\bm{m}}$ 
with respect to the non-loop edges is given by 
$$M_{\bm{m}}:= M_{k+1}  \cdots \cdot M_2 \cdot M_1.$$
It is proved in \cite{KinTakasawa08} that 
$M_{\bm{m}}$ is a Perron-Frobenius matrix. 
\if0
(Here a square matrix $M$ with nonnegative integer entries 
is \textit{Perron-Frobenius} 
if some power of $M$ is a positive matrix.) 
By the Perron-Frobenius theorem \cite[Theorem 1.1]{Seneta06}, 
$M_{\bm{m}}$ has a real eigenvalue 
$\lambda(M_{\bm{m}})>1$ which exceeds the moduli of all other eigenvalues. 
\fi
Moreover one can show that 
$g_{\bm{m}}:G_{\bm{m}} \rightarrow G_{\bm{m}}$ is efficient. 
As a consequence of  Bestvina-Handel algorithm \cite{BestvinaHandel95}, 
the Perron-Frobenius eigenvalue 
$\lambda(M_{\bm{m}})$ of $M_{\bm{m}}$ 
equals the stretch factor $\lambda(\phi_{\bm{m}})$ of $\phi_{\bm{m}}$. 
By (\ref{equation_claim}), $\phi_{\bm{m}}$ is conjugate to $\beta_{\bm{m}}$ 
in $\mathrm{MCG}(\Sigma_{0, N+1})$. 
We obtain 
\begin{equation}
\label{equation_how-to-compute}
\lambda_{\bm{m}} (= \lambda(\beta_ {\bm{m}}))= \lambda(M_{\bm{m}}). 
\end{equation}
The Perron-Frobenius eigenvalue $\lambda(M_{\bm{m}})$ 
gives the stretch factor $\lambda_{\bm{m}}$. 
\end{proof}

Fixing  $N \ge 3$, 
let $Y_N$ be the set of braids 
$\beta_{\bm{m}} $ over all  $\bm{m} \in \Psi_{N-1}$. 
$$Y_N:= \{ \beta_{\bm{m}}\  | \ 
\bm{m}=(m_1, \dots, m_{k+1}) \in \Psi_{N-1}\ \mbox{with }k \ge 0 \} \subset B_N.$$
By Lemma~\ref{lem_alpha-beta}, 
all braids $\beta_{\bm{m}}$ in $Y_N$ except the case $\bm{m}= (N-1)$ 
 are pseudo-Anosov. 
By Lemma~\ref{lem_bijection}, $Y_N$ can be written by 
\begin{equation}
\label{equation_Y}
Y_N= \{\alpha_{\bm{\omega}}\ |\ \bm{\omega} \in \Omega_N^+\}.
\end{equation}
\if0
The following is an example of the set $Y_N$ when $N= 3,4$ and $5$.  
\begin{eqnarray*}
Y_3 &=& \{\beta_{(2)}, \beta_{(1,1)}\}, 
\\
Y_4 &=& \{\beta_{(3)}, \beta_{(1,2)}, \beta_{(2,1)}, \beta_{(1,1,1)}\}, 
\\
Y_5 &=& \{\beta_{(4)}, \beta_{(1,3)}, \beta_{(2,2)}, \beta_{(3,1)}, \beta_{(2,1,1)}, 
\beta_{(1,2,1)}, \beta_{(1,1,2)}, \beta_{(1,1,1,1)}\}.
\end{eqnarray*}
\fi
The following result identifies the braids in $Y_N$ 
with the largest and smallest stretch factor, respectively.

\begin{theorem}
\label{thm_max-mini}
Among all braids in $Y_N \setminus \{\beta_{(N-1)}\}$, 
the braid $\beta_{\bm{1}_{N-1}}= \beta_{(1, 1, \dots, 1)} $  realizes 
the largest stretch factor, while the smallest stretch factor is realized by  $\beta_{(n, n)} $ if $ N = 2n+1$, and by $\beta_{(n-1, n)}$ 
(and $\beta_{(n, n-1)}$) if $ N = 2n $.
\end{theorem}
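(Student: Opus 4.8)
The plan is to reduce everything to an analysis of the polynomials $F_{\bm m}(t)$ furnished by Theorem~\ref{thm_recursive-example}. By Lemma~\ref{lem_alpha-beta} and the identification (\ref{equation_Y}), comparing stretch factors over $Y_N\setminus\{\beta_{(N-1)}\}$ is the same as comparing the numbers $\lambda_{\bm m}$, the largest real roots of $F_{\bm m}(t)$, over all compositions $\bm m\in\Psi_{N-1}$ with at least two parts. Viewing a composition as a placement of $k$ bars among $N-1$ circles, I would organize the argument around two principles: (i) among the two-part compositions, the balanced split minimizes $\lambda_{\bm m}$; and (ii) inserting one extra bar (splitting a part) strictly increases $\lambda_{\bm m}$. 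Granting (i) and (ii), the maximum is immediate, since every $\bm m\neq\bm 1_{N-1}$ can be refined to $\bm 1_{N-1}$ by repeatedly splitting parts, whence $\lambda_{\bm m}<\lambda_{\bm 1_{N-1}}$. The minimum follows as well: any composition with $\ge 3$ parts coarsens, by deleting bars, to a two-part composition of strictly smaller stretch factor (reverse of (ii)), and that two-part value is at least the balanced two-part value by (i); hence the global minimum is realized by the balanced two-part composition, namely $(n,n)$ when $N=2n+1$ and $(n-1,n)$ — equivalently $(n,n-1)$ by Lemma~\ref{lem_reverse} — when $N=2n$.

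For principle (i) I would compute $F_{(m_1,m_2)}$ explicitly from the recursion. Using $R_{(m_1)}(t)=t^{m_1+2}-t^{m_1+1}-2t$ and ${R_{(m_1)}}_*(t)=-2t^{m_1+1}-t+1$, one obtains, with $N-1=m_1+m_2$,
\[
F_{(m_1,m_2)}(t)=t^{N+1}-t^{N}-t+1-2\bigl(t^{m_1+1}+t^{m_2+1}\bigr),
\]
which is reciprocal and symmetric in $m_1\leftrightarrow m_2$, consistent with Lemma~\ref{lem_reverse}. Only the final term depends on the split, and for fixed $t>1$ the map $a\mapsto t^{a+1}+t^{N-a}$ is strictly convex, hence minimized at the balanced value of $a$. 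Thus, writing $\bm m_{\mathrm{bal}}$ for the balanced two-part composition and $\bm m$ for any other, $F_{\bm m_{\mathrm{bal}}}(t)>F_{\bm m}(t)$ for all $t>1$. Since $F_{\bm m}(t)>0$ for $t>\lambda_{\bm m}$ (positive leading coefficient, $\lambda_{\bm m}$ the largest root), we get $F_{\bm m_{\mathrm{bal}}}(t)>0$ there as well, so $F_{\bm m_{\mathrm{bal}}}$ has no root exceeding $\lambda_{\bm m}$, and evaluation at $t=\lambda_{\bm m}$ gives $\lambda_{\bm m_{\mathrm{bal}}}<\lambda_{\bm m}$. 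This proves (i).

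For principle (ii) I would use the train-track description recorded in the computation of $\lambda_{\bm m}$ above: $\lambda_{\bm m}=\lambda(M_{\bm m})$ is the Perron-Frobenius eigenvalue of the transition matrix $M_{\bm m}=M_{k+1}\cdots M_1$ of the efficient graph map $g_{\bm m}\colon G_{\bm m}\to G_{\bm m}$. Splitting a part $m_i$ into $(a,m_i-a)$ replaces one marked-point cluster by two adjacent clusters and inserts an extra elementary twist factor, enlarging $G_{\bm m}$ by additional non-loop edges. The goal is to identify the edges of $G_{\bm m}$ with a subset of those of $G_{\bm m'}$ so that the refined transition matrix dominates a conjugate of $M_{\bm m}$ entrywise while genuinely adding transitions, forcing a strictly larger Perron-Frobenius eigenvalue. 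A purely algebraic alternative is to evaluate $F_{\bm m'}$ at $t=\lambda_{\bm m}$ and, tracking the sign alternations in the recursion, show that $F_{\bm m'}(\lambda_{\bm m})$ forces a root of $F_{\bm m'}$ strictly to the right of $\lambda_{\bm m}$, exactly as in the two-part argument above.

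I expect principle (ii) to be the main obstacle. The difficulty is twofold: making the domination \emph{strict} and uniform regardless of where the new bar is inserted, and controlling the parity-dependent factors $(-1)^i$ together with the reciprocals ${R_{(\dots)}}_*$ in the recursion, which block any naive monotonicity of the polynomials. I expect the Perron-Frobenius route to be cleanest, with strictness following once one verifies that the enlarged graph map stays irreducible — indeed efficient, as already used in the computation of $\lambda_{\bm m}$ — and acquires new closed paths. The boundary case of a single part is harmless, since it is periodic by Lemma~\ref{lem_periodic-braid} and never enters the comparison.
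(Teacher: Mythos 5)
Your principle (i) is proved correctly and completely: the formula $F_{(m_1,m_2)}(t)=t^{N+1}-t^{N}-t+1-2\bigl(t^{m_1+1}+t^{m_2+1}\bigr)$ is right, the convexity of $a\mapsto t^{a+1}+t^{N-a}$ for fixed $t>1$ pins down the balanced split, and the root comparison (monic polynomial positive beyond its largest real root) is sound. This is genuinely different from the paper, which at this step simply cites \cite[Proposition~3.33]{HironakaKin06}; your argument is a nice self-contained replacement. The genuine gap is principle (ii), which you acknowledge but never establish, and your entire architecture rests on it: the maximum requires splitting an \emph{arbitrary} part (refining $(1,3,1)$ to $\bm{1}_{5}$, say, forces a split of the middle part), and the minimum requires merging. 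Neither of your two sketches closes this. The Perron--Frobenius sketch asks the refined transition matrix to ``dominate a conjugate of $M_{\bm{m}}$ entrywise,'' but entrywise domination controls spectral radii only between \emph{nonnegative} matrices, and a conjugate of $M_{\bm{m}}$ need not be nonnegative; worse, for a middle split $G_{\bm{m}}$ does not sit inside $G_{\bm{m}'}$ in any way that makes $M_{\bm{m}}$ a principal block of $M_{\bm{m}'}$, since the cluster structure and all subsequent twist maps change. (The paper's Proposition~\ref{prop_increasing-example} is exactly this block-domination argument, and it works precisely because the new part is appended at the \emph{end}, so the old graph is a genuine subgraph and the old graph maps are untouched.) The algebraic sketch is likewise only a hope, blocked — as you yourself note — by the parity signs and the reciprocals in the recursion.

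What you are missing is that the tools stated in the paper before the theorem already sidestep (ii) entirely, and they were available to you. Corollary~\ref{cor_decreasing-example} (componentwise monotonicity, from Proposition~\ref{prop_decreasing-example}) gives $\lambda_{\bm{m}}\le\lambda_{\bm{1}_{k+1}}$ in one stroke for any $(k{+}1)$-part composition — no refinement through intermediate compositions is needed — and Proposition~\ref{prop_increasing-example} climbs $\lambda_{\bm{1}_{k+1}}\le\lambda_{\bm{1}_{N-1}}$ by appending $1$'s at the end; that settles the maximum. For the minimum you only ever need to merge the \emph{last} two parts, and an end-merge follows from the same two results: $\lambda_{(m_1,\dots,m_k,m_{k+1}+m)}<\lambda_{(m_1,\dots,m_k,m_{k+1})}<\lambda_{(m_1,\dots,m_k,m_{k+1},m)}$, the first inequality by Corollary~\ref{cor_decreasing-example} and the second by Proposition~\ref{prop_increasing-example}. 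Substituting your principle (i) for the paper's citation would then yield a complete (and partly improved) proof; as written, however, both the reduction to two-part compositions and the entire maximum claim remain unproven.
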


To prove Theorem~\ref{thm_max-mini}, 
we need the following results.

\begin{proposition}[Proposition~1.1 in \cite{KinTakasawa08}]
\label{prop_decreasing-example}
For $k >0$, 
we consider $(k+1)$-tuples of positive integers 
$\bm{m}= (m_1, \dots, m_{k+1})$ and 
$\bm{m}'= (m_1', \dots, m_{k+1}')$. 
Suppose that $m_i'= m_i+1$ for some $i$ 
and $m_j'= m_j$ if $j \ne i$. 
Then we have $\lambda_{\bm{m}'} < \lambda_{\bm{m}}$.  
\end{proposition}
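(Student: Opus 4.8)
The plan is to reduce the comparison of stretch factors to a comparison of the largest real roots of the polynomials $F_{\bm{m}}$ and $F_{\bm{m}'}$ from Theorem~\ref{thm_recursive-example}, by recasting the recursion for the $R_{(m_1,\dots,m_i)}$ as a transfer-matrix product. Writing $R_i := R_{(m_1,\dots,m_i)}$ and $R_i^\ast := {R_i}_\ast$ for its reciprocal, I would first record the companion recursion for the pair $(R_i,R_i^\ast)$: a direct computation with the definition of the reciprocal turns the scalar recursion of Theorem~\ref{thm_recursive-example} into the vector identity
$$\begin{pmatrix} R_i \\ R_i^\ast \end{pmatrix} = A_i(t) \begin{pmatrix} R_{i-1} \\ R_{i-1}^\ast \end{pmatrix}, \qquad A_i(t) = \begin{pmatrix} t^{m_i}(t-1) & (-1)^i\, 2t \\ (-1)^i\, 2t^{m_i} & 1-t \end{pmatrix},$$
with the initial vector $(R_0,R_0^\ast)^{\!\top}=(t,1)^{\!\top}$. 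Setting $w(t)=\big(t^{m_{k+1}},\,(-1)^{k+1}\big)$, the stretch-factor polynomial becomes the single scalar product $F_{\bm{m}}(t)=w(t)\,A_k(t)\cdots A_1(t)\,(t,1)^{\!\top}$.

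The decisive observation is that increasing a single part by one amounts to inserting one diagonal factor into this product: $A_i(t)$ with $m_i$ replaced by $m_i+1$ equals $A_i(t)\,D(t)$ with $D(t)=\mathrm{diag}(t,1)$ (it simply scales the first column of $A_i$ by $t$), and likewise $w$ with $m_{k+1}+1$ equals $w(t)\,D(t)$. Writing $D(t)=I+(t-1)E_{11}$ with $E_{11}=\mathrm{diag}(1,0)$, I then obtain a single difference formula valid for every position $i\in\{1,\dots,k+1\}$,
$$F_{\bm{m}'}(t) = F_{\bm{m}}(t) + (t-1)\,R_{i-1}(t)\,\big(L_i(t)\big)_1,$$
where $L_i(t):=w(t)A_k(t)\cdots A_i(t)$ is the left partial product and $\big(L_i\big)_1$ its first entry, the boundary cases being recovered with the conventions $R_0=t$ and $L_{k+1}=w$. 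Evaluating at $\lambda:=\lambda_{\bm{m}}$, where $F_{\bm{m}}(\lambda)=0$, gives $F_{\bm{m}'}(\lambda)=(\lambda-1)\,R_{i-1}(\lambda)\,\big(L_i(\lambda)\big)_1$, so (as $\lambda>1$) the sign of $F_{\bm{m}'}(\lambda)$ is governed entirely by the two factors $R_{i-1}(\lambda)$ and $\big(L_i(\lambda)\big)_1$.

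To finish, I would show that the correction $(t-1)R_{i-1}(t)\big(L_i(t)\big)_1$ is strictly positive for all $t\ge\lambda$, so that $F_{\bm{m}'}(t)>0$ on $[\lambda,\infty)$; since $F_{\bm{m}'}$ has positive leading coefficient and, by the Perron--Frobenius property of $M_{\bm{m}'}$ together with (\ref{equation_how-to-compute}), its largest real root is $\lambda_{\bm{m}'}$, this forces $\lambda_{\bm{m}'}<\lambda=\lambda_{\bm{m}}$. The positivity of $R_{i-1}$ on $[\lambda,\infty)$ reduces to showing that the largest real root of $R_{i-1}$ lies strictly below $\lambda_{\bm{m}}$, a monotonicity statement in the length of the composition that I would prove inductively from the same transfer-matrix estimates; the suffix factor $\big(L_i\big)_1$ can be handled symmetrically, using the reversal symmetry of Lemma~\ref{lem_reverse} to recast the left partial product as an $R$-type polynomial of the reversed tail.

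The main obstacle is precisely this last step: establishing the whole-ray sign (not merely the pointwise sign at $\lambda$) of the prefix polynomial $R_{i-1}$ and of the suffix contraction $\big(L_i\big)_1$, uniformly over all interior positions $i$. The boundary positions $i=1$ and $i=k+1$ are easy, since the perturbation sits at one end and only one of the two factors is nontrivial; the interior positions are harder because prefix and suffix must be controlled at once. I expect the cleanest route is to extract the signs of $R_{i-1}(\lambda)$ and $\big(L_i(\lambda)\big)_1$ from the positivity of the left and right Perron--Frobenius eigenvectors of $M_{\bm{m}}$, and then upgrade pointwise positivity to positivity on $[\lambda,\infty)$ by comparing $\lambda$ with the spectral radii of the sub-products that produce $R_{i-1}$ and $L_i$.
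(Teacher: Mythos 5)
First, a point of orientation: the paper itself contains no proof of this proposition --- it is imported verbatim as Proposition~1.1 of \cite{KinTakasawa08} --- so your argument has to stand entirely on its own. Its algebraic skeleton is in fact correct. Using that each $R_{(m_1,\dots,m_i)}$ is monic and vanishes to order exactly one at $t=0$, a direct computation with the definition $f_*(t)=t^{\deg f}f(1/t)$ confirms the companion recursion ${R_{(m_1,\dots,m_i)}}_*=(-1)^i 2t^{m_i}R_{(m_1,\dots,m_{i-1})}+(1-t){R_{(m_1,\dots,m_{i-1})}}_*$, hence your matrices $A_i(t)$, the product formula for $F_{\bm{m}}$, the observation that incrementing $m_i$ inserts $D(t)=\mathrm{diag}(t,1)$, and the resulting difference identity $F_{\bm{m}'}=F_{\bm{m}}+(t-1)R_{i-1}\,(L_i)_1$ are all valid. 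Granting positivity of $R_{i-1}$ and $(L_i)_1$ on $[\lambda_{\bm{m}},\infty)$, the conclusion does follow, since $F_{\bm{m}'}$ is monic and its largest real root is $\lambda_{\bm{m}'}$ by Theorem~\ref{thm_recursive-example}.

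The genuine gap is that this positivity --- which you yourself flag as the main obstacle --- is the entire mathematical content of the proposition, and nothing in your sketch closes it. It is a root-location claim: the largest real roots of the prefix polynomial $R_{i-1}$ and of the suffix factor $(L_i)_1$ must lie strictly below $\lambda_{\bm{m}}$, and this is of the same nature and difficulty as the monotonicity being proved. Your proposed source of positivity, the Perron--Frobenius eigenvectors of $M_{\bm{m}}$, is not connected by any stated identity to the quantities $R_{i-1}(\lambda)$ and $(L_i(\lambda))_1$; eigenvector entries are, up to scale, cofactors of $\lambda I-M_{\bm{m}}$, and relating those cofactors to these auxiliary polynomials is itself a substantive claim requiring proof. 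Worse, the natural elementary route is circular: one would identify the largest root of $R_{i-1}$ as the limit of $\lambda_{(m_1,\dots,m_{i-1},M)}$ as $M\to\infty$ and then invoke monotonicity in the appended entry $M$ --- which is exactly Proposition~\ref{prop_decreasing-example}. Proposition~\ref{prop_increasing-example}, which the paper does prove independently and which you may legitimately use, cannot substitute, because it only compares a composition with its own extensions, never two compositions differing in an interior entry. Finally, the sign bookkeeping for interior $i$ is not cosmetic: the entries of the $A_j$ carry position-dependent signs $(-1)^j$, the second entries of the partial products are negative for $t>1$, and transposing the product only identifies $(L_i)_1$ with an $R$-type polynomial of the reversed tail up to diagonal conjugations and a shifted parity pattern. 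In short, you have a correct and clean reduction, but the reduced statement is left unproved, so the proposal does not yet constitute a proof.
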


Using Proposition~\ref{prop_decreasing-example} 
repeatedly, we obtain the following.

\begin{corollary}
\label{cor_decreasing-example}
For $k >0$, we consider $(k+1)$-tuples of positive integers 
$\bm{m}= (m_1, \dots, m_{k+1})$ and 
$\bm{m}'= (m_1', \dots, m_{k+1}')$. 
Suppose that 
$m_j \le m_j'$ for all $j$. 
Then we have $\lambda_{\bm{m}'} \le  \lambda_{\bm{m}}$. 
The equality holds  if and only if  $\bm{m}=\bm{m}' $, i.e., 
$m_j= m_j'$ for all $j$. 
\end{corollary}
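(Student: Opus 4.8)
The plan is to deduce this purely combinatorially from Proposition~\ref{prop_decreasing-example} by interpolating between the two tuples one unit at a time. Set
$$D := \sum_{j=1}^{k+1}(m_j' - m_j) \ge 0,$$
which is the total amount by which the coordinates must increase to pass from $\bm{m}$ to $\bm{m}'$. First I would construct a finite chain of $(k+1)$-tuples
$$\bm{m}= \bm{m}^{(0)}, \ \bm{m}^{(1)}, \ \dots, \ \bm{m}^{(D)}= \bm{m}',$$
where each $\bm{m}^{(\ell+1)}$ is obtained from $\bm{m}^{(\ell)}$ by increasing exactly one coordinate (any index $j$ with $\bm{m}^{(\ell)}_j < m_j'$) by $1$ and leaving the rest unchanged. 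Such an index exists whenever $\bm{m}^{(\ell)} \ne \bm{m}'$, so the chain is well defined and terminates at $\bm{m}'$ after exactly $D$ steps.

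The key point is that every tuple in the chain is again a legitimate input for the stretch-factor machinery. Since we only ever increase entries, all coordinates stay positive integers; the length stays fixed at $k+1$; and the hypothesis $k>0$ is preserved, so by Lemma~\ref{lem_alpha-beta} every $\beta_{\bm{m}^{(\ell)}}$ is pseudo-Anosov and $\lambda_{\bm{m}^{(\ell)}}$ is defined. Consequently each consecutive pair $\bm{m}^{(\ell)}, \bm{m}^{(\ell+1)}$ satisfies exactly the hypothesis of Proposition~\ref{prop_decreasing-example} (one coordinate raised by $1$, the others equal), yielding the strict inequality $\lambda_{\bm{m}^{(\ell+1)}} < \lambda_{\bm{m}^{(\ell)}}$.

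Chaining these inequalities gives
$$\lambda_{\bm{m}'}= \lambda_{\bm{m}^{(D)}} \le \lambda_{\bm{m}^{(0)}}= \lambda_{\bm{m}},$$
and the inequality is strict as soon as $D \ge 1$, because then at least one strict step occurs. For the equality claim I would simply observe that $D=0$ is equivalent to $m_j = m_j'$ for all $j$, i.e. $\bm{m}=\bm{m}'$, in which case the chain is trivial and the two stretch factors coincide; and conversely $\bm{m}\ne\bm{m}'$ forces $D\ge 1$ and hence the strict inequality, so equality holds if and only if $\bm{m}=\bm{m}'$.

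There is no real obstacle here: the content is entirely in Proposition~\ref{prop_decreasing-example}, and the only thing to check carefully is the bookkeeping that the interpolating tuples never leave the admissible class (positive entries, fixed length $k+1$, and $k>0$), which is immediate because the operation only raises positive integers. The mild point worth stating explicitly is that one may increase the coordinates in any order, so the chain can always be completed regardless of which index is chosen at each step.
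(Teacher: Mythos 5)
Your proposal is correct and is essentially the paper's own argument: the paper proves this corollary in one line by ``using Proposition~\ref{prop_decreasing-example} repeatedly,'' which is precisely the one-coordinate-at-a-time interpolation chain you spell out, including the observation that strictness of each step forces equality only when $\bm{m}=\bm{m}'$.
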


\begin{proposition}
\label{prop_increasing-example}
For $k >0$, 
we consider 
a $(k+1)$-tuple of positive integers 
 $\bm{m}= (m_1, \dots, m_{k+1})$ and 
a $(k+2)$-tuple of positive integers 
$\bm{m}'= (m_1', \dots, m_{k+1}', m_{k+2}')$. 
We assume that 
$\bm{m}'$ is of the form 
$$\bm{m}'= (m_1, \dots, m_{k+1}, m_{k+2}'),$$
i.e., $m_j= m_j'$ for all $j= 1, \dots, k+1$. 
Then we have 
$\lambda_{\bm{m}} < \lambda_{\bm{m}'}$. 
\end{proposition}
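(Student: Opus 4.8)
The plan is to deduce the proposition from a single limiting lemma, the already-available monotonicity in Proposition~\ref{prop_decreasing-example}, and one short algebraic identity between the polynomials of Theorem~\ref{thm_recursive-example}. Throughout, abbreviate $R_{\bm n}:=R_{(n_1,\dots,n_j)}$ for a composition $\bm n=(n_1,\dots,n_j)$ and write $\rho(\bm n)$ for the largest real root of $R_{\bm n}$.

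First I would record, by induction on $j$ using the recursion defining $R_{(m_1,\dots,m_j)}$, that each $R_{\bm n}$ has leading coefficient $+1$; in particular $R_{\bm n}(t)\to+\infty$ and $R_{\bm n}$ is positive to the right of $\rho(\bm n)$. The key input is the following \emph{limiting lemma}: for every composition $\bm n$ one has $\rho(\bm n)>1$, and the one-block extensions satisfy $\lambda_{(\bm n,q)}\downarrow\rho(\bm n)$ as $q\to\infty$, so that $\lambda_{(\bm n,q)}>\rho(\bm n)$ for every finite $q\ge1$. To see the convergence, note that the polynomial whose largest root is $\lambda_{(\bm n,q)}$ is $F_{(\bm n,q)}(t)=t^{q}R_{\bm n}(t)+(-1)^{j+1}{R_{\bm n}}_*(t)$; for $t>1$ the factor $t^{q}$ swamps the $q$-independent reciprocal term, so for large $q$ the sign of $F_{(\bm n,q)}$ on $(1,\infty)$ tracks that of $R_{\bm n}$ and its largest root is squeezed toward $\rho(\bm n)$. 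The strict, monotone descent — hence the statement that each finite term lies \emph{above} the limit — is then supplied by Proposition~\ref{prop_decreasing-example}, applied to the last entry $q$.

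Granting the lemma, the proposition follows in a few lines. Applying it to the truncation $\bm n=(m_1,\dots,m_k)$ with $q=m_{k+1}$ gives $\lambda_{\bm m}>\rho(m_1,\dots,m_k)$, and hence $R_{(m_1,\dots,m_k)}(\lambda_{\bm m})>0$ since this polynomial has positive leading coefficient. Now I would use the identity
\[
R_{(m_1,\dots,m_{k+1})}(t)=(t-1)F_{\bm m}(t)+(-1)^{k+1}(t+1)\,{R_{(m_1,\dots,m_k)}}_*(t),
\]
which comes directly from comparing the two recursions, and substitute $t=\lambda_{\bm m}$. Because $F_{\bm m}(\lambda_{\bm m})=0$ yields $(-1)^{k+1}{R_{(m_1,\dots,m_k)}}_*(\lambda_{\bm m})=-\lambda_{\bm m}^{\,m_{k+1}}R_{(m_1,\dots,m_k)}(\lambda_{\bm m})$, this gives
\[
R_{(m_1,\dots,m_{k+1})}(\lambda_{\bm m})=-(\lambda_{\bm m}+1)\,\lambda_{\bm m}^{\,m_{k+1}}\,R_{(m_1,\dots,m_k)}(\lambda_{\bm m})<0 .
\]
As $R_{(m_1,\dots,m_{k+1})}$ has positive leading coefficient, it must vanish somewhere in $(\lambda_{\bm m},\infty)$, so $\rho(m_1,\dots,m_{k+1})>\lambda_{\bm m}$. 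Finally, applying the limiting lemma once more with $\bm n=(m_1,\dots,m_{k+1})$ and $q=m_{k+2}'$ gives $\lambda_{\bm m'}=\lambda_{(\bm n,m_{k+2}')}>\rho(m_1,\dots,m_{k+1})>\lambda_{\bm m}$, which is the claim.

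The main obstacle is the limiting lemma, and specifically making the root-convergence rigorous: one must check that $\rho(\bm n)$ is a simple root at which $R_{\bm n}$ changes sign (so the perturbed polynomials $F_{(\bm n,q)}$ genuinely have their largest root approaching $\rho(\bm n)$ from above rather than stalling at some smaller root), bound the $q$-independent reciprocal term ${R_{\bm n}}_*$ near $\rho(\bm n)$, and verify $\rho(\bm n)>1$ in every case, including the base case where $\bm n$ is a single block and $(\bm n,q)$ is the first pseudo-Anosov extension. Everything after the lemma is either the one-line sign computation above or an immediate appeal to Proposition~\ref{prop_decreasing-example}.
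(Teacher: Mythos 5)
Your algebra is sound as far as it goes: the identity $R_{(m_1,\dots,m_{k+1})}(t)=(t-1)F_{\bm{m}}(t)+(-1)^{k+1}(t+1)\,{R_{(m_1,\dots,m_k)}}_*(t)$ does follow from the two recursions in Theorem~\ref{thm_recursive-example}, and \emph{granting} your limiting lemma, the chain $\lambda_{\bm{m}}>\rho(m_1,\dots,m_k)$ $\Rightarrow$ $R_{(m_1,\dots,m_{k+1})}(\lambda_{\bm{m}})<0$ $\Rightarrow$ $\rho(m_1,\dots,m_{k+1})>\lambda_{\bm{m}}$ $\Rightarrow$ $\lambda_{\bm{m}'}>\lambda_{\bm{m}}$ is valid. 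The genuine gap is the limiting lemma itself, which you correctly flag as ``the main obstacle'' but do not prove, and it carries essentially all of the content. Your sketch (``$t^q$ swamps the reciprocal term'') only yields the \emph{upper} estimate $\limsup_q\lambda_{(\bm{n},q)}\le\rho(\bm{n})$, and combining this with the strict monotonicity from Proposition~\ref{prop_decreasing-example} proves nothing of the required form: a strictly decreasing sequence whose $\limsup$ is at most $\rho(\bm{n})$ can perfectly well have terms, or limit, strictly below $\rho(\bm{n})$. What your argument actually consumes (twice) is the lower bound $\lambda_{(\bm{n},q)}>\rho(\bm{n})$ for every finite $q$. Since $F_{(\bm{n},q)}(\rho(\bm{n}))=(-1)^{j+1}{R_{\bm{n}}}_*(\rho(\bm{n}))$ is independent of $q$, the natural way to get this is to prove the $q$-free sign statement $(-1)^{j+1}{R_{\bm{n}}}_*(\rho(\bm{n}))<0$, together with $\rho(\bm{n})>1$ and the sign-change property of $R_{\bm{n}}$ at $\rho(\bm{n})$ that you list. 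None of these is established in the proposal; each requires its own induction along the recursion defining $R_{\bm{n}}$ (the base case $R_{(m)}(t)=t^{m+1}(t-1)-2t$ is easy, the inductive step is not), and collectively they are not visibly easier than the proposition you are trying to prove. So the proof is incomplete: the deduction is correct, but the lemma it rests on is left open.

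For comparison, the paper's proof bypasses the polynomials entirely and is complete in a few lines: since $\bm{m}'$ extends $\bm{m}$ in its last entry, the graph $G_{\bm{m}}$ is a subgraph of $G_{\bm{m}'}$, so the transition matrix $M_{\bm{m}'}$ contains $M_{\bm{m}}$ as a principal block; the complementary blocks cannot all vanish because $M_{\bm{m}'}$ is Perron--Frobenius, and then the comparison part of the Perron--Frobenius theorem \cite[Theorem 1.1(e)]{Seneta06} applied to $D=\left[\begin{smallmatrix}M_{\bm{m}}&\bm{0}\\ \bm{0}&\bm{0}\end{smallmatrix}\right]\le M_{\bm{m}'}$, $D\ne M_{\bm{m}'}$, gives $\lambda(M_{\bm{m}})<\lambda(M_{\bm{m}'})$, which is the claim by (\ref{equation_how-to-compute}). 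If you wish to salvage your route, the honest remaining task is the induction proving $\rho(\bm{n})>1$ and the sign condition on ${R_{\bm{n}}}_*$ at $\rho(\bm{n})$; otherwise the matrix comparison is the shorter and self-contained path.
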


\begin{proof}
Consider the  graph maps 
$g_{\bm{m}}: G_{\bm{m}} \rightarrow G_{\bm{m}}$  and 
$g_{\bm{m}'}: G_{\bm{m}'} \rightarrow G_{\bm{m}'}$ 
as in (\ref{equation_graphmap}). 
Note that $G_{\bm{m}}$ is a subgraph of $G_{\bm{m}'}$ 
by the assumption on $\bm{m}'$. 
Let $M_{\bm{m}}$ and $M_{\bm{m}'}$ be the transtion matrices of 
$g_{\bm{m}}$ and $g_{\bm{m}'}$, respectively. 
Let $\lambda(M_{\bm{m}})$ and $\lambda(M_{\bm{m}'})$ be the 
corresponding Perron-Frobenius eigenvalues. 
By (\ref{equation_how-to-compute}), 
it is enough to prove that 
$ \lambda(M_{\bm{m}})< \lambda(M_{\bm{m}'})$. 
Under the suitable labeling of non-loop edges of $G_{\bm{m}'}$, 
the matrix $M_{\bm{m}'}$ can be written by 
$$M_{\bm{m}'}= 
\left[\begin{array}{cc}M_{\bm{m}}  & A \\B & C\end{array}\right],
$$
where $A,B$ and $C$ are block matrices with nonnegative integer entries 
and $C$ is a square matrix. 
Assume that $A= \bm{0}$ and $B= \bm{0}$. 
Then any power $M_{\bm{m}'}^k$ of $M_{\bm{m}'}$ is not a positive matrix, which contradicts the fact that $M_{\bm{m}'}$ is Perron-Frobenius. 
Hence,  either $A$ or $B$ is a non-zero matrix. 
Consider the square matrix $D$ with the same size as $M_{\bm{m}'}$ of the form 
$D= 
\left[\begin{array}{cc}M_{\bm{m}}  & \bm{0} \\\bm{0} & \bm{0}\end{array}\right]$. 
Then  $D \ne M_{\bm{m}'}$ 
since either $A$ or $B$ is a non-zero matrix. 
Since $A, B, C \ge \bm{0}$, we have 
$\bm{0} \le D \le M_{\bm{m}'}$, 
i.e., $0 \le D_{st} \le (M_{\bm{m}'})_{st}$ for each $st$-entry. 
Then the Perron-Frobenius theorem (\cite[Theorem 1.1(e)]{Seneta06})
tells us that 
if $\lambda$ is an eigenvalue of $D$, then 
$|\lambda| < \lambda(M_{\bm{m}'})$. 
Thus, we obtain 
$ \lambda(M_{\bm{m}})< \lambda(M_{\bm{m}'})$. 
This completes the proof. 
\end{proof}

\begin{proof}[Proof of Theorem~\ref{thm_max-mini}]
We consider the braid $\beta_{\bm{m}}$ associated with a $(k+1)$-tuple 
$\bm{m}= (m_1, \dots, m_{k+1})$. 
Suppose that $\bm{m}$ is a composition of $N-1$. 
Clearly, $k+1 \le N-1$ and 
$\beta_{\bm{m}}= \beta_{(m_1, \dots, m_{k+1})} \in Y_N$. 
By Corollary~\ref{cor_decreasing-example}, we have the inequality 
\begin{equation}
\label{equation_decreasing-innequality}
  \lambda(\beta_{(m_1, \dots, m_{k+1})}) \le 
\lambda(\beta_{\bm{1}_{k+1}})\hspace{5mm} 
\mbox{for all\ integers } m_1, \dots, m_{k+1} \ge 1.
\end{equation}
The equality holds if and only if $\bm{m}= \bm{1}_{k+1}$. 
The inequalities $k+1 \le N-1$ and 
(\ref{equation_decreasing-innequality}) 
together with Proposition~\ref{prop_increasing-example} 
tell us that 
\if0
for each $m \ge 1$ we have 
\begin{equation}
\label{equation_two-inequality}
\lambda(\beta_{\bm{1}_{k+1}}) < 
\lambda(\beta_{(\underbrace{1,\dots, 1}_{k+1}, m)}) \le 
\lambda(\beta_{\bm{1}_{k+2}}). 
\end{equation}
The first inequality in (\ref{equation_two-inequality}) 
is given by Proposition~\ref{prop_increasing-example}.  
The second inequality in (\ref{equation_two-inequality})  comes from 
Corollary~\ref{cor_decreasing-example}, and 
the equality holds if and only if $m=1$. 
Putting (\ref{equation_decreasing-innequality}) 
and (\ref{equation_two-inequality}) together, we obtain 
\fi
$$ \lambda(\beta_{(m_1, \dots, m_{k+1})}) \le 
\lambda(\beta_{\bm{1}_{k+1}})  \le 
\lambda(\beta_{\bm{1}_{N-1}}).$$ 
Thus, 
$\beta_{\bm{1}_{N-1}} \in Y_N$ 
realizes the largest stretch factor. 

Next, we turn to the braid in $Y_N$ with the smallest stretch factor. 
Let  $\beta_{(m_1, \dots, m_{k+1}, m)}$ 
be the  braid associated with a $(k+2)$-tuple 
$(m_1, \dots, m_{k+1}, m)$. 
Suppose  this $(k+2)$-tuple is a composition of $N-1$. 
Then 
$\beta_{(m_1, \dots, m_{k+1}, m)} \in Y_N$. 
Note that the braid $\beta_{(m_1, \dots, m_k, m_{k+1}+m)}$ 
associated with the $(k+1)$-tuple 
$(m_1, \dots, m_k, m_{k+1}+m) \in \Psi_{N-1}$  
is also an element of  $Y_N$. 
Proposition~\ref{prop_increasing-example} tells us that 
\begin{equation}
\label{equation_first-innequality}
\lambda(\beta_{(m_1, \dots, m_k, m_{k+1})} ) 
< \lambda(\beta_{(m_1, \dots, m_k, m_{k+1}, m)}).
\end{equation}
By Corollary~\ref{cor_decreasing-example}, 
we have 
\begin{equation}
\label{equation_second-innequality}
\lambda(\beta_{(m_1, \dots, m_k, m_{k+1}+m)}) 
< \lambda(\beta_{(m_1, \dots, m_k, m_{k+1})}).
\end{equation}
By (\ref{equation_first-innequality}) and (\ref{equation_second-innequality}), 
the stretch factor of 
$\beta_{(m_1, \dots, m_k, m_{k+1}+m)} \in Y_N$  
is smaller than that of $\beta_{(m_1, \dots, m_k, m_{k+1}, m)} \in Y_N$. 
This means that for the braid with the smallest stretch factor, 
it is enough to consider elements $\beta_{\bm{m}} \in Y_N$ 
associated with the compositions $\bm{m} \in \Psi_{N-1}$  of the form $\bm{m}= (m,n)$. 
The inverse  $\beta_{(m,n)}^{-1}$ of $\beta_{(m,n)}$ 
satisfies 
$\Delta \beta_{(m,n)}^{-1} \Delta^{-1}=\beta_{(n,m)} $, 
where $\Delta$ is the half twist. 
Hence $\beta_{(m,n)}^{-1}$ and $\beta_{(n,m)}$ are conjugate 
in $B_N$. 
In particular, we have $\lambda(\beta_{(m,n)}^{-1}) = \lambda(\beta_{(n,m)})$. 
Since a pseudo-Anosov braid $b$ and its inverse $b^{-1}$ have the same 
stretch factor, we conclude that 
$$\lambda(\beta_{(m,n)}) = \lambda(\beta_{(m,n)}^{-1})=
\lambda(\beta_{(n,m)}).$$
(The equality $\lambda(\beta_{(m,n)})= \lambda(\beta_{(n,m)})$ 
also follows from Lemma~\ref{lem_stretchfactor-equivalent}.)
Hence, we restrict our attention to the pairs 
$(m,n)$ with $m \le n$. 
The following inequalities are proved in \cite[Proposition~3.33]{HironakaKin06}. 
\begin{eqnarray*}
\lambda(\beta_{(n,n)}) &<& \lambda(\beta_{(n-k, n+k)}) 
\hspace{7mm} \mbox{for}\ k= 1,2,  \dots, n-1, 
\\
\lambda(\beta_{(n-1,n)}) &<& \lambda(\beta_{(n-k-1, n+k)}) 
\hspace{3mm} \mbox{for}\ k= 1,2,  \dots, n-2. 
\end{eqnarray*}
Thus, 
if $ N = 2n+1$ (resp. $N= 2n$), then 
the smallest stretch factor is realized by  $\beta_{(n, n)} $ 
(resp. $\beta_{(n-1, n)}$ and $\beta_{(n, n-1)}$).
This completes the proof. 
\end{proof} 

By Theorem~\ref{thm_max-mini}, 
we are interested in the computation of the stretch factors of 
$\beta_{\bm{1}_{N-1}}$ and $\beta_{(m,n)}$ with $|m-n|= 0$ or $1$. 
Examples~\ref{ex_111} and \ref{ex_beta-mn} are useful.

\begin{example}
\label{ex_111}
Let us compute 
the stretch factors of $\beta_{\bm{1}_3}$, 
$\beta_{\bm{1}_4}$ and 
$\beta_{\bm{1}_5}$. 
\if0
Let us compute 
the stretch factors of $\beta_{\bm{1}_3}= \beta_{(1,1,1)}$, 
$\beta_{\bm{1}_4}=\beta_{(1,1,1,1)}$ and 
$\beta_{\bm{1}_5}=\beta_{(1,1,1,1,1)}$. 
\fi
\begin{enumerate}
\item 
By recursive formulas of $R_{\bm{m}}(t)$ and $F_{\bm{m}}(t)$ 
(see Theorem~\ref{thm_recursive-example}) 
and the definition of $f_*(t)$ (see (\ref{equation_reciprocal})), 
we obtain 
\begin{eqnarray*}
R_{(1)}(t)&=& t^3-t^2-2t, 
\\
R_{(1,1)}(t) &=& t(t-1)R_{(1)}(t)+ 2t {R_{(1)}}_*(t) 
= t^5-2t^4-5t^3+2t, 
\\
F_{(1,1,1)}(t)&=&  
t R_{(1,1)}(t) - {R_{(1,1)}}_*(t) 
= (t-1)(t+1)^3(t^2-4t+1). 
\end{eqnarray*}
Hence, the largest real root of the third factor $t^2-4t+1$ of 
$F_{(1,1,1)}(t)$ is equal to  $\lambda(\beta_{\bm{1}_3})= 2+ \sqrt{3}$.  

\item 
A computation shows that 
\begin{eqnarray*}
R_{(1,1,1)}(t)&=& t(t-1) R_{(1,1)}(t) - 2t {R_{(1,1)}}_*(t) 
\\
&=& 
t^7-3t^6-7t^5+5t^4+12t^3+2t^2-2t, 
\\
F_{(1,1,1,1)}(t)&=& t R_{(1,1,1)}(t)+ {R_{(1,1,1)}}_*(t) 
\\
&=& (t+1)^4 (t^4-7t^3+13t^2-7t+1). 
\end{eqnarray*}
The largest real root of the second factor of $F_{(1,1,1,1)}(t)$ 
gives the stretch factor 
 $\lambda(\beta_{\bm{1}_4}) \approx 4.39026$. 

\item 
Lastly, we compute 
\begin{eqnarray*}
R_{(1,1,1,1)}(t)&=& t^9-4t^8-8t^7+16t^6+31t^5-18t^3-4t^2+2t, 
\\
F_{(1,1,1,1,1)}(t)&=& t R_{(1,1,1,1)}(t) - {R_{(1,1,1,1)}}_*(t) 
\\
&=&(t-1)(t+1)^5(t^2-3t+1)(t^2-5t+1). 
\end{eqnarray*}
Hence, the largest real root of the last factor $t^2-5t+1$ 
of $F_{(1,1,1,1,1)}(t)$ 
gives us 
$\lambda(\beta_{\bm{1}_5}) \approx 4.79129$. 
\end{enumerate}
\end{example}

\begin{example}
\label{ex_beta-mn}
A computation shows that 
\begin{eqnarray*}
R_{(m)}(t)&=& t^{m+1}(t-1)-2t= t^{m+2}-t^{m+1}-2t, 
\\
{R_{(m)}}_*(t)&=&t^{m+2}R_{(m)}(\tfrac{1}{t})= 1-t-2t^{m+1}. 
\end{eqnarray*}
By Theorem~\ref{thm_recursive-example}, 
the stretch factor  of  $\beta_{(m,n)}$  
is the largest real root of 
$$F_{(m,n)}(t)= t^n R_{(m)}(t)+ {R_{(m)}}_*(t) 
= t^n (t^{m+2}-t^{m+1}-2t) -2t^{m+1}-t+1.$$
\end{example}

In Table~\ref{table_max-mini-stretch-factor}, 
we list the smallest and largest stretch factors 
among all pseudo-Anosov braids in $Y_N$.

\begin{table}[hbtp]
\caption{
Smallest and largest stretch factors 
in $Y_N \setminus \{\beta_{(N-1)}\}$.}
\label{table_max-mini-stretch-factor}
\begin{center}
\begin{tabular}{|c|c|c|}
\hline
$N$ & $\displaystyle\min_{\substack{ \beta \in Y_N \setminus \{\beta_{(N-1)}\}}} \lambda(\beta)$ & $\displaystyle\max_{\substack{ \beta \in Y_N \setminus \{\beta_{(N-1)}\}}} \lambda(\beta)$
\\
\hline 
$3$ & $\lambda(\beta_{\bm{1}_{2}})=  \tfrac{3+ \sqrt{5}}{2}$ & 
$\lambda(\beta_{\bm{1}_{2}})=\tfrac{3+ \sqrt{5}}{2} $
\\
\hline 
$4$ & $\lambda(\beta_{(1,2)})\approx 2.29663$ & 
$\lambda(\beta_{\bm{1}_{3}})=2+ \sqrt{3} $
\\
\hline
$5$ & $\lambda(\beta_{(2,2)}) \approx 2.01536 $ & 
$\lambda(\beta_{\bm{1}_{4}})\approx 4.39026 $
\\
\hline
$6$ & $\lambda(\beta_{(2,3)})\approx 1.8832  $ & 
$\lambda(\beta_{\bm{1}_{5}})\approx 4.79129 $
\\
\hline
$7$ & $\lambda(\beta_{(3,3)})\approx 1.75488  $ & 
$\lambda(\beta_{\bm{1}_{6}})\approx 5.04892 $
\\
\hline
$8$ & $\lambda(\beta_{(3,4)})\approx 1.6815  $ & 
$\lambda(\beta_{\bm{1}_{7}})\approx 5.22274 $
\\
\hline
$9$ & $\lambda(\beta_{(4,4)})\approx 1.60751  $ & 
$\lambda(\beta_{\bm{1}_{8}})\approx 5.345 $
\\
\hline
$10$ & $\lambda(\beta_{(4,5)})\approx 1.56028  $ & 
$\lambda(\beta_{\bm{1}_{9}})\approx 5.43401 $
\\
\hline
\end{tabular}
\end{center}
\end{table}

\section{Simple choreographies by Yu}
\label{section_Yu}

In this section, we explain simple choreographies of the planar $N$-body problem obtained by Yu \cite{Yu17}. 
His main theorem is:
\begin{theorem}[\cite{Yu17}]
\label{theorem:Yu}
\label{theorem_main-Yu}
    For every $N \ge 3$, there exist at least $2^{N-3} + 2^{\lfloor(N-3)/2\rfloor}$ different simple choreographies for the planar Newtonian $N$-body problem 
with equal masses,
    where $\lfloor \cdot \rfloor$ denotes the integer part of a real number.
\end{theorem}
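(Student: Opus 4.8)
The plan is to prove Theorem~\ref{theorem:Yu} by the direct method of the calculus of variations, realizing the simple choreographies as minimizers of the Lagrangian action
$$\mathcal{A}(\bm{z}) = \int_0^{N} \Big( \tfrac12 \sum_{i=0}^{N-1} |\dot z_i(t)|^2 + \sum_{0 \le i<j\le N-1} \frac{1}{|z_i(t)-z_j(t)|} \Big) \dd t$$
over loops in a suitable $H^1$ space. First I would impose the simple-choreography constraint $z_i(t)=z_0(t+i)$, which collapses the $N$-tuple to a single loop $q:=z_0$ of period $N$ and rewrites $\mathcal{A}$ as a functional of $q$ alone. Then, following the equivariant scheme of Ferrario--Terracini \cite{FerrarioTerracini04}, I would attach to each $\bm{\omega}\in\Omega_N$ a finite symmetry group acting on the loop space, combining the cyclic time-shift of the choreography with a reflection across the $x$-axis, and minimize $\mathcal{A}$ over the closed subspace $\Lambda_{\bm{\omega}}$ of loops equivariant under that group. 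The sign data $\bm{\omega}=(\omega_1,\dots,\omega_{N-1})$ prescribes, through the $\omega_i$, the turning pattern of the curve between consecutive symmetric times, thereby selecting distinct symmetry-constrained classes of admissible loops.

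Next I would establish existence of a minimizer in each class $\Lambda_{\bm{\omega}}$. The analytic inputs are that $\mathcal{A}$ is weakly lower semicontinuous and coercive on $\Lambda_{\bm{\omega}}$: lower semicontinuity follows from convexity of the kinetic term together with Fatou's lemma applied to the nonnegative Newtonian potential, while coercivity uses the symmetry essentially, since the equivariance pins the center of mass and, with the fixed period, forces bounded action to control the $H^1$ seminorm. The direct method then produces a minimizer $q_{\bm{\omega}}\in\Lambda_{\bm{\omega}}$, a priori only a generalized (possibly colliding) solution.

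The main obstacle, as always in this circle of ideas, will be to prove that the minimizer is \emph{collision-free}, so that it is a genuine classical solution. Here I would combine two ingredients. For interior collisions I would invoke a Marchal-type averaging argument: at a putative collision one replaces the colliding arc by an averaged test path and shows the action strictly decreases, contradicting minimality; the Sundman--Sperling asymptotics of collision orbits and the subhomogeneity of the Newtonian potential drive the estimate, and the assumption of \textbf{equal masses} keeps the blow-up profile symmetric. Collisions occurring at the symmetric time instants (the fixed points of the group action) are more delicate, since the competing test path must itself respect the symmetry; I would treat these by a local level estimate comparing the constrained minimal action of a collision path against that of an explicit symmetric detour. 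This boundary case is precisely where the refinement over the plain Marchal theorem is required, and I expect it to be the hardest part of the argument.

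Finally I would count the resulting solutions. Each $\bm{\omega}\in\Omega_N$ yields a collision-free choreography, but the natural symmetry group of the problem, generated by time reversal, reflections in the coordinate axes, and rotation by $\pi$, acts on the parameter set $\Omega_N$ (of cardinality $2^{N-1}$) and can identify solutions arising from different $\bm{\omega}$. Counting orbits of this action by Burnside's lemma, the generic orbits contribute the main term $2^{N-3}$, while the configurations fixed by a nontrivial symmetry, namely the self-reversing and self-negating sign patterns governed by an essentially palindromic condition, contribute the correction $2^{\lfloor (N-3)/2\rfloor}$. Summing the two contributions yields at least $2^{N-3}+2^{\lfloor (N-3)/2\rfloor}$ geometrically distinct simple choreographies, completing the proof.
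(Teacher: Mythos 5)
Your plan follows the same variational architecture as Yu's proof, which this paper does not reprove: Section~\ref{section_Yu} only reviews it and deduces Theorem~\ref{theorem:Yu} from the cited Theorem~\ref{thm:yu_submain} together with a count of equivalence classes. Your Burnside count is also numerically correct: the orbits of the Klein four-group action $\bm{\omega}\mapsto\{\pm\bm{\omega},\pm\widehat{\bm{\omega}}\}$ on the $2^{N-1}$-element set $\Omega_N$ number exactly $2^{N-3}+2^{\lfloor (N-3)/2\rfloor}$. The gap is in how $\bm{\omega}$ enters the minimization. You propose to ``attach to each $\bm{\omega}$ a finite symmetry group'' and take $\Lambda_{\bm{\omega}}$ to be the closed subspace of loops equivariant under it; but the group you describe (cyclic time shift combined with reflection across the $x$-axis) is the same dihedral group $D_N$ for every $\bm{\omega}$, and it cannot be refined to see the sign data: a sign pattern $(\omega_1,\dots,\omega_{N-1})$ is not the fixed-point condition of any linear isometric action on the loop space, since fixed-point sets of such actions are linear subspaces while sign conditions cut out cones. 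Minimizing over the common equivariant space therefore produces only one solution, the rotating regular $N$-gon, which lies in $\Lambda_N^{D_N}$ and is the global minimizer in $\Lambda_N^{\mathbb{Z}_N}\supset\Lambda_N^{D_N}$ (the paper makes precisely this point in Example~\ref{ex:standard_group} and in its review of Yu's setting). In Yu's scheme the $2^{N-1}$ classes are distinguished inside the single space $\Lambda_N^{D_N}$ by the $\bm{\omega}$-topological constraints of Definition~\ref{def_constraints}, namely $\mathrm{Im}(z_0(j/2))=\omega_j|\mathrm{Im}(z_0(j/2))|$, which are inequality constraints, not symmetries; your sentence about ``turning patterns'' gestures at this, but your formal setup does not implement it.

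This distinction carries real logical weight in the next step. A minimizer over a set defined by inequality constraints is a critical point of the action (hence, via the Palais principle, a weak solution of \eqref{eq:nbp}) only if it lies in the \emph{interior} of the constraint set, i.e.\ only if $\mathrm{Im}(z_0(j/2))\neq 0$ for all $j$. By \eqref{eq:simple_choreo} and \eqref{eq:simple_choreo_1} one has $z_{N-j}(j/2)=\bar{z}_0(j/2)$, so the constraint boundary $\mathrm{Im}(z_0(j/2))=0$ consists exactly of paths with a collision at the symmetric time $j/2$. Consequently the collision analysis you postpone to the end is not merely an upgrade from weak to classical solution: without it the constrained minimizer need not satisfy the Euler--Lagrange equations at all. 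Your discussion of collisions ``at the symmetric time instants'' touches the right difficulty, but the chain (constraint boundary $=$ collision, hence collision-free $\Rightarrow$ interior $\Rightarrow$ critical point) must be made explicit, and it only makes sense once the classes are formulated as topological constraints rather than as equivariance. Finally, to conclude that there are at least $2^{N-3}+2^{\lfloor (N-3)/2\rfloor}$ \emph{different} choreographies you also need that minimizers attached to inequivalent $\bm{\omega}$ cannot coincide as curves; this follows from the $x$-axis crossing pattern described in Remark~\ref{rem:number_closedcurve}, but it is an extra step your orbit count silently assumes.
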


In the end of this section, we will explain the meaning of 
``different simple choreographies" in the statement of Theorem~\ref{theorem:Yu}.

We identify the plane $\R^2$ with the complex plane $\C$.   
The planar \(N\)-body problem with equal masses is described by the following differential equation:
\begin{align}
\label{eq:nbp}
\ddot{z}_{j} = \sum_{k \in \{0,1,\dots,N-1\} \backslash \{j\}}-\frac{z_j-z_k}{|z_j-z_k|^3} \quad (j \in \{0,1,\dots,N-1\})
\end{align}
where $\bm{z}=(z_j)_{j=0}^{N-1} \in \mathbb{C}^{N}$. 
The \(N\)-body problem has a variational structure. That is, the critical points of the functional
\[
\mathcal{A}_{[a,b]}(\bm{z})
= \int_{a}^{b} L(\bm{z},\dot{\bm{z}}) dt
\]
correspond to weak solutions of the $N$-body problem,
where
\[
\bm{z} \in H^1([a,b], \mathbb{C}^{N})
:=\{ \bm{x} \colon [a,b] \to \mathbb{C}^{N} \mid \bm{x}, \dot{\bm{x}} \in L^2([a,b], \mathbb{C}^{N})\}
\]
and
\[
L(\bm{z},\dot{\bm{z}})= \frac{1}{2}\sum_{j=0}^{N-1} |\dot{z}_j|^2 + \sum_{\substack{j,k \in \{0,1,\dots,N-1\}, \\ j<k}} \frac{1}{|z_j-z_k|}.
\]

Theorem \ref{theorem:Yu} was proved using this variational structure.  
More precisely, he showed the existence of a minimizer of $\mathcal{A}_{[0,N]}(\bm{z})$ in the $N$-periodic functional space
\[
\Lambda_N:={H^1(\R / N \mathbb{Z}},\mathbb{C}^{N})
\]
under symmetric and topological constraints.

Firstly, we explain the symmetric condition.
Let $G$ be a finite group and define three actions $\tau$, $\rho$ and $\sigma$ as follows:
\begin{align*}
    &\tau \colon G \to O(2), \ \text{(the action of \( G \) on the time circle \( \mathbb{R} / 2\pi \mathbb{Z} \))}, \\
    &\rho \colon G \to O(2), \ \text{(the action of \( G \) on two-dimensional Euclidean space)}, \text{ and} \\
    &\sigma \colon G \to S_{N}, \ \text{(the action of \( G \) on the index set \( \{0,1,\dots,N-1\} \))},
\end{align*}
where \( O(2) \) and \( S_{N} \) represent the two-dimensional orthogonal group and the  symmetric group of $N$ elements, respectively. 
For each $g \in G$, we define its action as follows:
\[
g(\bm{z}(t))=(\rho(g) z_{\sigma(g^{-1})(0)} (\tau(g^{-1})t), \dots,
\rho(g) z_{\sigma(g^{-1})(N-1)} (\tau(g^{-1})t).
\]
Set  
\begin{align*}
\Lambda_{N}^{G} &= \{ \bm{z} \in \Lambda_{N} \mid g(\bm{z}(t)) = \bm{z}(t) \ \text{for all} \ g \in G \},\\
\hat{\Lambda}_{N} &= \{ \bm{z} \in \Lambda_{N} \mid z_i(t) \neq z_j(t) \ \text{for} \ i \neq j \}, \ \text{and}\\
\hat{\Lambda}^{G}_{N} &= \Lambda_{N}^{G} \cap \hat{\Lambda}_{N}.
\end{align*}
The set $\hat\Lambda_{N}$ implies that each element has no collision.
As a consequence, a critical point of $\mathcal{A}_{[0,N]}$ in $\hat{\Lambda}_{N}^{G}$ is also a critical point in $\hat{\Lambda}_{N}$ if the $N$ masses are equal.
This fact follows from the Palais principle:
\begin{proposition}[Palais principle, \cite{Palais79}]
Let \( M \) be a Hilbert space with an inner product \( \langle \cdot, \cdot \rangle \),  
and let \( G \) be a group such that each \( g \in G \) is a linear operator on \( M \) satisfying  
\[
\langle gx, gy \rangle = \langle x, y \rangle \quad \text{for any } x, y \in M.
\]
Define
\[\Sigma = \{ x \in M \mid gx = x \ \ \text{for} \ g \in G \}.\]  
Suppose that \( f \colon M \to \mathbb{R} \) is of class \( C^1 \), \( G \)-invariant,  
and \( \Sigma \) is a closed subspace of \( M \).  
If \( p \in \Sigma \) is a critical point of \( f|_{\Sigma} \), 
then \( p \in \Sigma \) is also a critical point of \( f \).
\end{proposition}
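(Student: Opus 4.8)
The plan is to reduce the assertion to a short gradient computation based on the Riesz representation theorem, using crucially that every $g \in G$ acts as a linear isometry. First I would record the infinitesimal consequence of $G$-invariance. Since $f$ is $C^1$ and satisfies $f(gx)=f(x)$, differentiating by the chain rule — and noting that the linear map $g$ is its own derivative — gives $Df(gx)(gv)=Df(x)(v)$ for all $x,v\in M$ and $g\in G$. Evaluating at the fixed point $p\in\Sigma$, where $gp=p$, yields
\[
Df(p)(gv)=Df(p)(v)\qquad\text{for all } v\in M,\ g\in G.
\]

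Next I would pass to the gradient. By the Riesz representation theorem there is a unique $\nabla f(p)\in M$ with $Df(p)(v)=\langle \nabla f(p),v\rangle$ for every $v$. Applying the inner-product-preserving property $\langle gx,gy\rangle=\langle x,y\rangle$ with $x=g^{-1}\nabla f(p)$ and $y=v$ turns the left-hand side of the displayed identity into $\langle g^{-1}\nabla f(p),v\rangle$, so that $\langle g^{-1}\nabla f(p),v\rangle=\langle \nabla f(p),v\rangle$ for all $v\in M$. Hence $g^{-1}\nabla f(p)=\nabla f(p)$, and therefore $g\,\nabla f(p)=\nabla f(p)$, for every $g\in G$. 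This is precisely the statement that $\nabla f(p)\in\Sigma$, i.e., the gradient inherits the symmetry of $f$.

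Finally I would invoke the hypothesis that $p$ is a critical point of $f|_\Sigma$. Because $\Sigma$ is a closed subspace, it is itself a Hilbert space and its tangent space at $p$ is $\Sigma$, so the critical point condition reads $\langle \nabla f(p),v\rangle=0$ for all $v\in\Sigma$. Choosing $v=\nabla f(p)\in\Sigma$ forces $\|\nabla f(p)\|^2=0$, whence $\nabla f(p)=0$ and thus $Df(p)=0$ on all of $M$; that is, $p$ is a critical point of $f$.

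The step I expect to be the crux — and the only one using the isometry hypothesis — is showing $\nabla f(p)\in\Sigma$: the principle of symmetric criticality is false for general, non-isometric group actions, and it is exactly the identity $\langle gx,gy\rangle=\langle x,y\rangle$ (equivalently $g^{-1}=g^{*}$) that promotes the $G$-invariance of $f$ to $G$-invariance of its gradient. The hypothesis that $\Sigma$ be closed is what legitimizes both the Riesz argument on $\Sigma$ and the identification of the tangent space of $\Sigma$ with $\Sigma$ itself.
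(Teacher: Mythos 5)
Your proof is correct. Note that the paper itself gives no proof of this proposition: it is quoted verbatim from Palais's work and used as a black box, so there is no in-paper argument to compare against. Your argument is the standard proof of the principle of symmetric criticality in the Hilbert-space setting with linear isometric actions (essentially Palais's own argument specialized to this case): the equivariance identity $Df(p)(gv)=Df(p)(v)$ at a fixed point, the promotion of $G$-invariance of $f$ to $G$-invariance of $\nabla f(p)$ via the adjoint relation $g^{*}=g^{-1}$, and the conclusion $\langle \nabla f(p),\nabla f(p)\rangle=0$ once $\nabla f(p)$ is known to lie in the closed subspace $\Sigma$. The one hypothesis you use implicitly is that $G$ acts by a genuine representation, so that $g^{-1}\in G$ acts as the inverse operator of $g$; this is built into the statement's assumption that $G$ is a group of linear operators on $M$, so your proof is complete as written.
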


\begin{example}
\label{ex:standard_group}
    Let \( G \) be the cyclic group, i.e., \( G = \langle g \mid g^N = 1 \rangle (=: \mathbb{Z}_{N}) \), and its actions are given by: 
\begin{align}
\label{eq:gn1}
   \tau(g)t = t-1, \quad \rho(g) = \mathrm{id}, \quad \text{and} \quad \sigma(g) = (0, 1, \dots, N-1). 
\end{align} 
Then, for any \( \bm{z} \in \Lambda_{N}^{\mathbb{Z}_{N}} \),  
\begin{align}
\label{eq:simple_choreo}
z_{j}(t) = z_0(t+j) \ \ \text{for} \ t \in \mathbb{R} \ \text{and} \ j \in \{0,1,\dots,N-1\},
\end{align}
and this implies that a critical point of \( \mathcal{A}_{[0,N]} \) in \( \Lambda_{N}^{\mathbb{Z}_{N}} \) describes a simple choreography if it has no collisions.  

Example \ref{ex:standard_group} is a standard setting in proofs of the existence of periodic solutions using the Palais principle.  
However, it is also known that the global minimizer in \( \Lambda_{N}^{\mathbb{Z}_{N}} \) is only the rotating regular \( N \)-gon.  
Thus, additional constraints are needed to obtain nontrivial periodic orbits through minimizing methods.
\end{example}

\begin{example}[Setting in \cite{Yu17}]
Set  
\[
D_N = \langle g, h \mid g^N = h^2 = 1, (gh)^2 = 1 \rangle,
\]  
where the actions of \( g \) are the same as in \eqref{eq:gn1} in Example \ref{ex:standard_group}, and  
\begin{align*}
    &\tau(h)t = -t + 1, \quad \rho(h)\bm{z} = \bar{\bm{z}}, \quad \text{and} \\
    &\sigma(h) = (0, N-1)(1, N-2) \cdots (\mathbf{n}, N-1-\mathbf{n}),
\end{align*}  
where \( \mathbf{n} = \lfloor (N-1)/2 \rfloor \).
Thus, any $\bm{z} \in \Lambda_{N}^{D_{N}}$ satisfies the following three properties.
Firstly, the actions of $h$ imply:
\begin{align}
\label{eq:simple_choreo_1}
    z_{j}(t)=\bar{z}_{N-1-j}(1-t) \ \text{for} \ t \in \mathbb{R} \ \text{and} \ j  \in \{0,1,\dots,N-1\}
\end{align}
and combining \eqref{eq:simple_choreo} and \eqref{eq:simple_choreo_1} yields:
\begin{align*}
    z_{j}(t) =\bar{z}_{N-j}(-t),
\end{align*}
especially,
\begin{align}
\label{eq:simple_choreo_2}
\mathrm{Re}(\dot{z}_{j}(0)) = - \mathrm{Re}(\dot{z}_{N-j}(0)) .
\end{align}
Secondly, the actions of $gh$ indicate:
\begin{align}
\label{eq:origin0}
    z_0(t)= \bar{z}_0(-t)  \ \text{for} \ t \in \mathbb{R}.
\end{align}
By $\eqref{eq:origin0}$, we get $\mathrm{Im}(z_0(0))=0$.

Clrarly, \( \Lambda_{N}^{D_{N}} \subset \Lambda_{N}^{\mathbb{Z}_{N}} \).  
Since the set \( \Lambda_{N}^{D_{N}} \)  contains an element representing the rotating \( N \)-gon,  
which is the global minimizer in \( \Lambda_{N}^{\mathbb{Z}_{N}} \),  
we  need additional assumptions.
\end{example}
\begin{definition}[The $\bm{\omega}$-topological constraints, \cite{Yu17}]
\label{def_constraints}
For any $\bm{\omega} \in \Omega_N$, $\bm{z} \in \Lambda_N^{D_N}$ is said to satisfy the {\textit{$\bm{\omega}$-topological constraints}} if 
\[
\mathrm{Im}(z_0(j/2)) = \omega_j |\mathrm{Im}(z_0(j/2))|
\ \text{for} \ j \in \{1,\dots,N-1\}.
\]
\end{definition}
The periodic orbits obtained in Theorem \ref{theorem:Yu} satisfy the $\bm{\omega}$-topological constraints and following monotonicity.
\begin{theorem}[\cite{Yu17}]
\label{thm:yu_submain}
For each ${\bm{\omega}} \in \Omega_{N}$, there exists at least one simple choreography 
${\bm{z}}=(z_j)_{j=0}^{N-1} \in \hat{\Lambda}_{N}^{D_{N}}$ 
satisfying $\eqref{eq:nbp}$, the $\bm{\omega}$-topological constraints 
and the following properties:
\begin{enumerate}
    \item $\mathrm{Re}(\dot{z}_0(t))>0 \ \text{for} \ t \in (0,N/2)$, and
    \item $\mathrm{Re}(\dot{z}_0(0))=\mathrm{Re}(\dot{z}_0(N/2))=0$.
\end{enumerate}
\end{theorem}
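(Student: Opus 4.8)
The plan is to prove Theorem~\ref{thm:yu_submain} by the direct method of the calculus of variations, applied to the action $\mathcal{A}_{[0,N]}$ restricted to the constrained symmetric class
\[
\mathcal{C}_{\bm\omega} := \{\bm z \in \Lambda_N^{D_N} \mid \bm z \text{ satisfies the } \bm\omega\text{-topological constraints}\}.
\]
First I would check that the infimum of $\mathcal{A}_{[0,N]}$ over $\mathcal{C}_{\bm\omega}$ is finite by exhibiting one collision-free loop in $\mathcal{C}_{\bm\omega}$: prescribe $z_0$ as a smooth curve crossing the real axis with the prescribed sign pattern $\mathrm{Im}(z_0(j/2)) = \omega_j|\mathrm{Im}(z_0(j/2))|$, define the remaining particles by the choreography relation \eqref{eq:simple_choreo}, and verify the $D_N$-symmetry and the absence of collisions directly. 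Coercivity of $\mathcal{A}_{[0,N]}$ on $\Lambda_N^{D_N}$ follows once the center of mass is normalized to the origin, which is consistent with the $D_N$-symmetry; the kinetic term then controls the full $H^1$-norm via a Poincar\'e--Wirtinger inequality. Weak lower semicontinuity is standard (the kinetic term is convex, hence weakly lower semicontinuous, and the potential term is nonnegative and weakly lower semicontinuous by Fatou's lemma). Crucially, $\mathcal{C}_{\bm\omega}$ is weakly closed: the constraints are sign conditions on the point evaluations $\mathrm{Im}(z_0(j/2))$, which are weakly continuous since the embedding $H^1 \hookrightarrow C^0$ on a compact interval is compact. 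Hence a minimizer $\bm z \in \mathcal{C}_{\bm\omega}$ exists.

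The central obstacle, and the heart of the argument, is to show that the minimizer $\bm z$ is collision-free, i.e.\ $\bm z \in \hat\Lambda_N^{D_N}$. Total collisions are excluded by a Gordon-type estimate, since total collapse forces the action to blow up. For partial collisions I would use Marchal's averaging method: if $\bm z$ had an isolated cluster collision at an interior time $t_* \in (0,N)$, one averages over a family of rigid rotations of the colliding cluster and shows the mean action strictly decreases, so some member of the family lowers $\mathcal{A}_{[0,N]}$, contradicting minimality. The delicate point here is that the competitor must remain in $\mathcal{C}_{\bm\omega}$, i.e.\ respect both the $D_N$-symmetry and the $\bm\omega$-topological constraints. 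Since the sign conditions are imposed only at the finitely many times $t = j/2$, a collision at $t_* \notin \{j/2\}$ can be perturbed locally without disturbing them, while a collision located exactly at some $t = j/2$ requires a separate analysis exploiting the reflection symmetry fixing that instant. I expect this collision-exclusion step, carried out compatibly with the symmetry and topological constraints, to be by far the hardest part; it is the technical core of \cite{Yu17}.

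Once $\bm z$ is collision-free it is a smooth critical point of $\mathcal{A}_{[0,N]}$ on $\hat\Lambda_N^{D_N}$, and by the Palais principle it is a critical point on $\hat\Lambda_N$, hence a classical solution of \eqref{eq:nbp}. Property~(2) then follows immediately from the symmetry \eqref{eq:origin0}: differentiating $z_0(t) = \bar z_0(-t)$ gives $\dot z_0(t) = -\overline{\dot z_0(-t)}$, so at $t=0$ we get $\dot z_0(0) = -\overline{\dot z_0(0)}$, whence $\mathrm{Re}(\dot z_0(0)) = 0$; using the $N$-periodicity $\dot z_0(-N/2) = \dot z_0(N/2)$, the same relation at $t = N/2$ yields $\mathrm{Re}(\dot z_0(N/2)) = 0$.

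Finally, for the monotonicity in property~(1) I would argue by a reflection/rearrangement comparison. If $\mathrm{Re}(\dot z_0)$ failed to be positive throughout $(0,N/2)$, then $\mathrm{Re}(z_0(t))$ would fail to be strictly increasing, and one could reflect the offending sub-arc across a vertical line to build a competitor with the same values at the constraint times $t = j/2$, the same symmetry type, and strictly smaller action, contradicting minimality; the freedom to perform such a rearrangement again comes from the constraints being localized at $t = j/2$. Strict positivity, rather than mere non-negativity, of $\mathrm{Re}(\dot z_0)$ is then obtained from the equations of motion, since $\mathrm{Re}(\dot z_0)$ cannot vanish on a nondegenerate interval without contradicting \eqref{eq:nbp}. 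This completes the plan.
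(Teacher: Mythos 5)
The paper does not prove this statement at all: it is Yu's theorem, quoted with the citation \cite{Yu17}, and Section~\ref{section_Yu} only recalls the variational framework behind it (minimization of $\mathcal{A}_{[0,N]}$ over $\Lambda_N^{D_N}$ under the $\bm{\omega}$-topological constraints, together with the Palais principle). Your outline reconstructs that framework faithfully: the direct method on the constrained symmetric class, weak closedness of the sign conditions at the point evaluations $t=j/2$, the Palais principle to pass from equivariant to genuine critical points, and your derivation of property~(2) by differentiating \eqref{eq:origin0} and using $N$-periodicity is exactly right. So methodologically you are aligned with the cited source.

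However, as a proof the proposal has two genuine gaps, and they sit precisely where the content of Yu's theorem lies. First, collision exclusion is deferred rather than carried out; this is not one technical step among several but essentially the whole theorem. In the equivariant setting Marchal's averaging cannot be invoked off the shelf: every competitor must remain in $\Lambda_N^{D_N}$ and satisfy the $\bm{\omega}$-topological constraints, and collisions occurring at the reflection-symmetric times $t=j/2$ --- exactly where the constraints are localized --- require the separate analysis you acknowledge but do not supply. Second, the argument you sketch for the monotonicity property~(1) does not work as stated. Reflecting an offending sub-arc of $z_0$ across a vertical line leaves the kinetic term unchanged, but through the choreography relation \eqref{eq:simple_choreo} all $N$ particles are determined by $z_0$, so the reflection perturbs every mutual distance $|z_0(t+i)-z_0(t+j)|$ in an uncontrolled, non-monotone way; there is no reason the competitor has strictly smaller action, hence no contradiction with minimality. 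Likewise, upgrading $\mathrm{Re}(\dot z_0)\ge 0$ to $\mathrm{Re}(\dot z_0)>0$ ``from the equations of motion'' is not immediate from \eqref{eq:nbp} pointwise: vanishing of $\mathrm{Re}(\dot z_0)$ on a nondegenerate interval is ruled out via real-analyticity of collision-free solutions (which forces $\mathrm{Re}(\dot z_0)\equiv 0$), and that needs to be said. Since properties~(1) and~(2) are exactly what the present paper uses downstream (they drive the crossing analysis in the proof of Theorem~\ref{thm_braid-type-omega}), these gaps are material; they are the reason the paper cites \cite{Yu17} rather than reproving the result.
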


\begin{remark}
Since the periodic orbits in Theorem \ref{theorem_main-Yu} are collision-free,
    $\eqref{eq:simple_choreo}$ and $\eqref{eq:simple_choreo_1}$ imply $\mathrm{Im}(z_0(j/2)) \neq 0$ for any $j \in \{1, \dots, N-1\}$.
\end{remark}

Let $\bm{z}_{\bm{\omega}} \in \Lambda_{N}^{D_N}$ satisfy the properties of Theorem \ref{thm:yu_submain} for $\bm{\omega}$.
If $\bm{z}_{\bm{\omega}} \in \hat\Lambda_{N}^{D_N}$,
then it draws a trajectory that transits between line segments parallel to the $x$- or $y$-axis.
The shape of the trajectories can be easily understood 
by showing examples as below.

For \( z_j \colon \mathbb{R} / N \mathbb{Z} \to \mathbb{C} \),  
\( \bm{z} \colon \mathbb{R} / N \mathbb{Z} \to \mathbb{C}^N \),  
and \( t_1, t_2 \) with \( 0 \leq t_1 < t_2 \leq N \), define  
\[
z_j([t_1,t_2]) = \{ z_j(t) \mid t_1 \leq t \leq t_2 \},  
\quad j \in \{0,1, \dots, N-1\},
\]
and  
\[
\bm{z}([t_1,t_2]) = \{ \bm{z}(t) \mid t_1 \leq t \leq t_2 \}.
\]
We think of $\bm{z}([t_1,t_2])$ as the  $N$ oriented curves 
in  the complex plane $\C$.

 Note that if $\bm{z}(t)$ is a solution of $\eqref{eq:nbp}$, so is $\bar{\bm{z}}(-t)$.
    Thus we get 
    \begin{align}
    \label{eq:conjugate_symmetry}
        z_0([0,N])=z_0([0,N/2]) \cup \bar{z}_0([0,N/2]).
    \end{align}
    Moreover, by the property $(2)$ of Theorem \ref{thm:yu_submain}, $z_0([0,N])$ forms a smooth closed curve.

\begin{remark}
\label{rem:number_closedcurve}
Set $\omega_0=\omega_N=0$ and 
\[
z_0((t_1,t_2)) = \{ z_0(t) \mid t_1 < t < t_2 \}.
\]
The proof of Proposition 3.1 in \cite{Yu17} implies that for $j \in \{0,\dots,N-1\}$, 
the trajectory $z_0((j/2,(j+1)/2))$ crosses the $x$-axis exactly once if $\omega_j \omega_{j+1} = -1$,  
and does not cross the $x$-axis otherwise. 
\end{remark}

\begin{example}
\label{ex:N=3}
    Set $N=3$ and $\bm{\omega}=(1,-1)$.
    Figure \ref{fig_orbit-1_-1}(1) represents $z_0([0,{N}/{2}])$.
    Each arrow precisely indicates $z_0([{i}/{2},{(i+1)}/{2}])$ 
    for $i=0,1,2$ and $\bm{\omega} =(1,-1) $ shows whether the trajectory passes through the positive or negative side of the $y$-axis.
    By $\eqref{eq:conjugate_symmetry}$, the case $\bm{\omega}=(1,-1)$ gives a periodic orbit like the figure-eight \cite{ChenMont00}.
    On the other hand, Figure \ref{fig_orbit-1_-1}(2) describes 
    the trajectory $\bm{z}_{\bm{\omega}}([0,1/2])$ 
    and each arrow represents
$z_0([0,1/2])$, $z_1([0,1/2])$ and $z_2([0,1/2])$, respectively. 
    The indices of the particles are determined from $\eqref{eq:simple_choreo}$. 
    See also Figure \ref{fig_orbit-1_-1}(3) for 
    $\bm{z}_{\bm{\omega}}([1/2,1])$.

\begin{figure}[htbp]
\begin{center}
\includegraphics[width=4.8in]{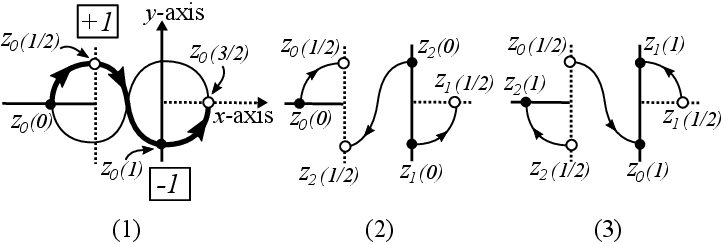}
\caption{Case $N=3$, $\bm{\omega}=(1,-1)$. 
(1) Thick arrows indiate $z_0([0,{N}/{2}])$. 
(2) $\bm{z}_{\bm{\omega}}([0,1/2])$. 
(3) $\bm{z}_{\bm{\omega}}([1/2,1])$.}
\label{fig_orbit-1_-1}
\end{center}
\end{figure}
\end{example}

\begin{example}
\label{ex:N=4}
Set \( N=4 \) and \( \bm{\omega}=(1,-1,1) \).  
As in the previous example, Figure \ref{fig_orbit-1_-1-1}(1) represents \( z_0([0,{N}/{2}]) \). 
Figures \ref{fig_orbit-1_-1-1}(2) and \ref{fig_orbit-1_-1-1}(3)  
depict \( \bm{z}_{\bm{\omega}}([0,1/2]) \) and \( \bm{z}_{\bm{\omega}}([1/2, 1]) \), respectively.  
While the figure-eight consists of two connected loops,  
a trajectory forming three such loops is called the super-eight,  
and the existence of a periodic solution with this shape in the  four-body problem has been established in  \cite{KZ03, Shibayama14}.

\begin{figure}[htbp]
\begin{center}
\includegraphics[width=4.8in]{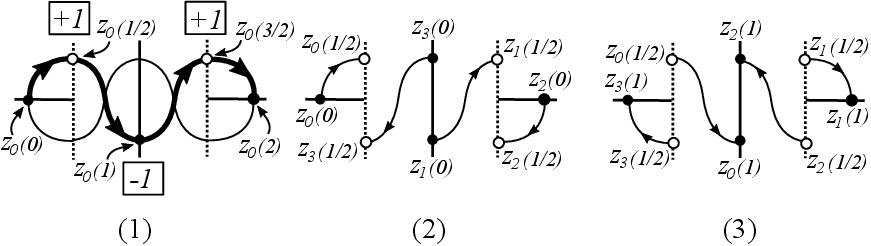}
\caption{Case $N=4$, $\bm{\omega}=(1,-1,1) $. 
 (1) Thick arrows indiate $z_0([0,{N}/{2}])$. 
(2) $\bm{z}_{\bm{\omega}}([0,1/2])$. 
(3) $\bm{z}_{\bm{\omega}}([1/2,1])$.}
\label{fig_orbit-1_-1-1}
\end{center}
\end{figure}
\end{example}

As seen above,  
each simple choreography in Theorem  \ref{theorem_main-Yu} travels a chain made of several loops. 
Moreover, Remark \ref{rem:number_closedcurve} implies that 
the number of loops is uniquely determined for each $\bm{\omega}$.  
For example, when $\bm{\omega} = (1, -1)$, the trajectory traces a chain made of two loops,  
whereas $\bm{\omega} = (1, -1, 1)$ results in a chain of three loops. 
More precisely,  
each trajectory of $\bm{z}_{\bm{\omega}}$ traces a chain consisting of $1 + |{\bm \omega}|$ loops,  
where $| {\bm \omega}|$ is defined by
\[
| {\bm \omega}| = \#\left\{ j \in \{1,\dots, N-2\} \,\middle|\, \omega_j \omega_{j+1} = -1 \right\}.
\]
In particular, if $\bm{\omega}= (1,1, \dots, 1)$, then 
$|\bm{\omega}|=0$ and 
$\bm{z}_{\bm{\omega}}$ traces a circle.

As shown in Examples~\ref{ex:N=3} and \ref{ex:N=4}, 
if $N$ is odd, then 
the periodic orbits $\bm{z}_{\bm{\omega}}$ for $\bm{\omega} \in \Omega_N$ 
are drawn based on $1$-solid and $1$-dotted horizontal line, $(N-1)/2$-solid and $(N-1)/2$-dotted vertical lines 
(Figure~\ref{fig_orbit-1_-1}(1)).  
When $N$ is even, the orbits are drawn based on $2$-solid horizontal lines, $(N-2)/2$-solid, and $N/2$-dotted vertical lines 
(Figure~\ref{fig_orbit-1_-1-1}(1)).  
The trajectory $\bm{z}_{\bm{\omega}}([(i/2),(i+1)/2])$
consists of $N$ oriented curves and
jumps from solid to dotted from $t={i}/{2}$ to $t=({i+1})/{2}$ if $i$
 is even 
 (see Figures~\ref{fig_orbit-1_-1}(2) and \ref{fig_orbit-1_-1-1}(2))
 and from dotted to solid if $i$
 is odd  
 (see Figures~\ref{fig_orbit-1_-1}(3) and \ref{fig_orbit-1_-1-1}(3)).

\begin{remark}
\label{rem_figure-super-8}
We provide  remarks on the figure-eight and the super-eight.
\begin{enumerate}
    \item The figure-eight, a simple choreography for the $3$-body problem with equal masses, 
    was discovered by Moore \cite{Moore93} as a numerical solution and later its existence was proven mathematically by Chenciner and Montgomery \cite{ChenMont00} using variational methods in a function space with a certain symmetry.  

    \item The existence of the super-eight, another simple choreography for the $4$-body problem with equal masses, was established by several researchers.  
    Gerver first discovered it numerically,
    Kapela and Zgliczy\'nski \cite{KZ03} provided a computer-assisted proof for its existence,
    and the third author \cite{Shibayama14} later gave a variational proof. 
    \item The reason why we write `like the figure-eight' and `like the super-eight' in Examples \ref{ex:N=3} and \ref{ex:N=4} is
    that the obtained orbits in Theorem \ref{thm:yu_submain} only have symmetry with respect to the $x$-axis (see (\ref{eq:origin0})),
    whereas the figure-eight and super-eight have symmetry with respect to both the $x$- and $y$-axis.
    Moreover, it is not clear whether the vertical solid and dotted lines are evenly spaced.   
    However, by considering additional symmetries, we obtain the same periodic solutions as the figure-eight and super-eight.
  See  \cite[Section 3]{Yu17} for more details.
\end{enumerate}
\end{remark}

To clarify the number $2^{N-3} + 2^{\lfloor(N-3)/2\rfloor}$  in Theorem \ref{theorem:Yu}, 
we define elements $- \bm{\omega}, \widehat{\bm{\omega}} \in \Omega_N$ 
for each $\bm{\omega}= (\omega_1, \dots, \omega_{N-1}) \in \Omega_N$ 
as follows. 
\begin{align*}
    - \bm{\omega}&:=(-\omega_1,-\omega_2,\dots, -\omega_{N-1}),\  \mbox{and}\\
    \widehat{\bm{\omega}}&:=
    (\omega_{N-1},\omega_{N-2}, \dots,\omega_1).
\end{align*}

\begin{figure}[htbp]
  \begin{minipage}[b]{0.48\columnwidth}
  \centering
\includegraphics[scale=0.25]{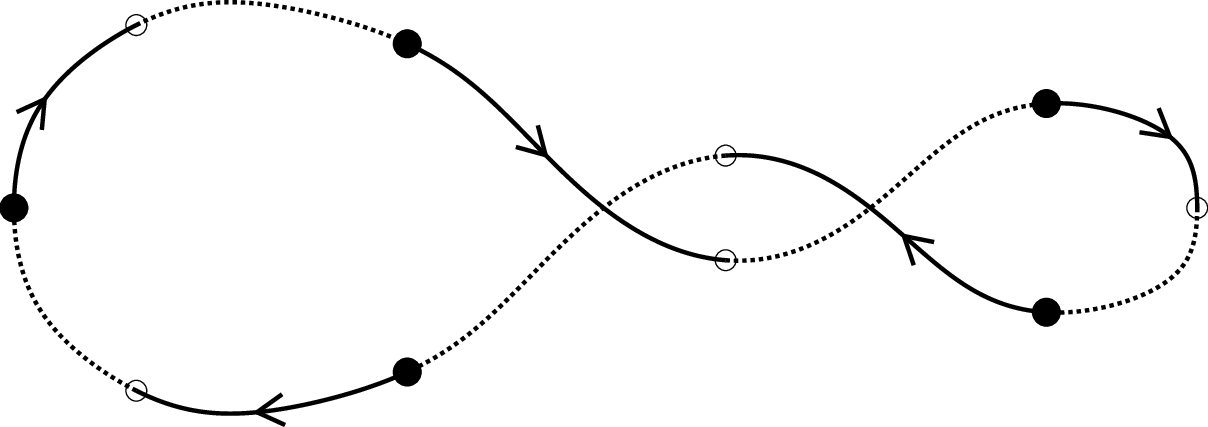} 

(1) $\bm{z}_{\bm{\omega}}(t)$. 
\end{minipage}
  \begin{minipage}[b]{0.48\columnwidth}
  \centering
\includegraphics[scale=0.25]{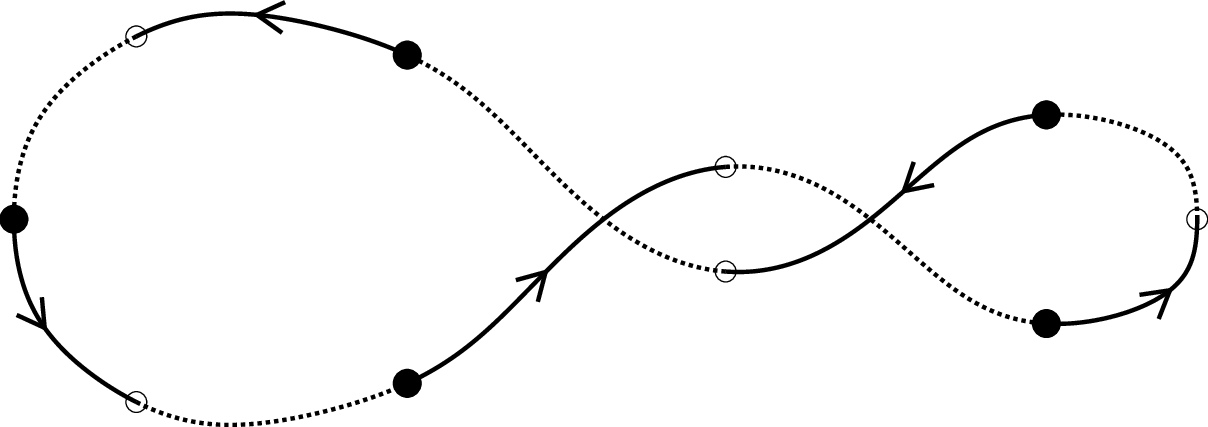} 

(2) $\bm{z}_{-\bm{\omega}}(t)$. 
\end{minipage}
\vspace{0.5cm}

  \begin{minipage}[b]{0.48\columnwidth}
  \centering
\includegraphics[scale=0.25]{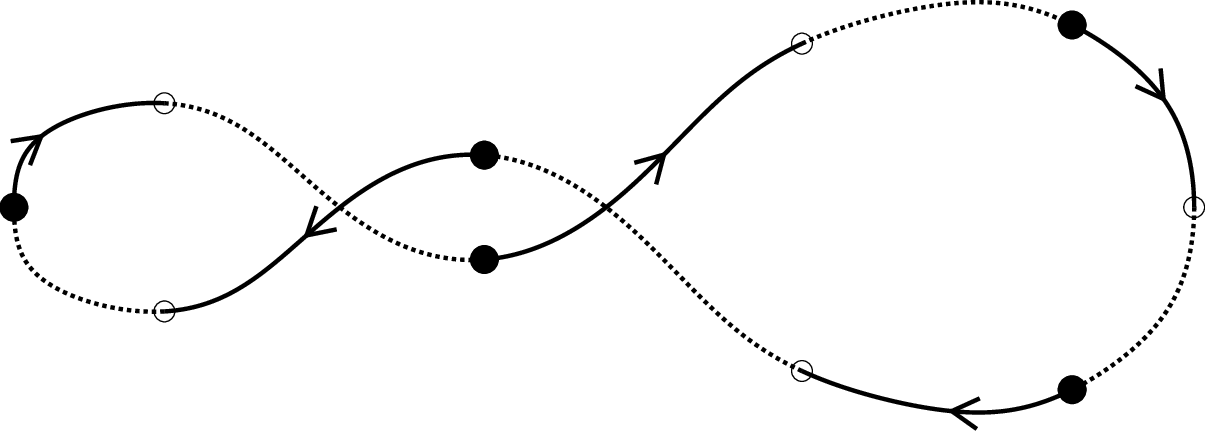}

(3) $\bm{z}_{\widehat{\bm{\omega}}}(t)$. 
\end{minipage}

\caption{Case $N=5$, $\bm{\omega}=(1, 1, -1, 1)$. 
}
\label{fig_equivalent-solution}
\end{figure}

We say that elements $\bm{\omega}, \bm{\omega}' \in \Omega_N$ 
are \textit{equivalent} and write $\bm{\omega} \sim \bm{\omega}' $ 
if $\bm{\omega}' \in \{\pm \bm{\omega}, \pm \widehat{\bm{\omega}}\}$. 
Figure~\ref{fig_equivalent-solution} shows examples of simple choreographies 
$\bm{z}_{\bm{\omega}}(t)$ corresponding to equivalent elements 
$\bm{\omega}$, $-\bm{\omega}$ and $\widehat{\bm{\omega}}$. 
The number of elements in \( \Omega_{N} \) up to the equivalence relation 
$\sim$ is given by  
\[
2^{N-3} + 2^{\lfloor (N-3)/2 \rfloor}.
\]  
Thus, this number represents the number of simple choreographies up to the equivalence relation $\sim$ obtained in Theorem \ref{thm:yu_submain}.  
See Figures~\ref{fig:numsoln=3}, \ref{fig:numsoln=4},
\ref{fig:numsoln=5} and \ref{fig:numsoln=6} 
in the case $N= 3, 4, 5$ and $6$, respectively.  
Moreover, Theorem \ref{theorem:Yu} follows immediately from Theorem \ref{thm:yu_submain}.

Figure~\ref{fig:numsoln=19} shows particular examples of simple choreographies 
$\bm{z}_{\bm{\omega}}(t)$ for 
$\bm{\omega}= \bm{\omega}_{\min}, \bm{\omega}_{\max}$ in the case $N=19$.

\begin{figure}[htbp]
\begin{center}
  \begin{minipage}[b]{0.48\columnwidth}
  \centering
\includegraphics[scale=0.2]{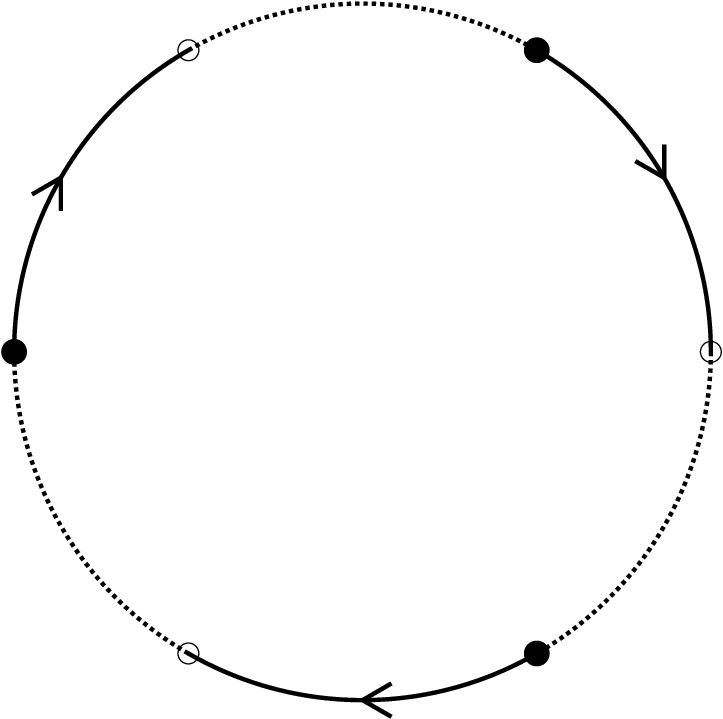} 

(1) $\bm{\omega}=(1, 1)$. 
\end{minipage}
  \begin{minipage}[b]{0.48\columnwidth}
  \centering
\includegraphics[scale=0.2]{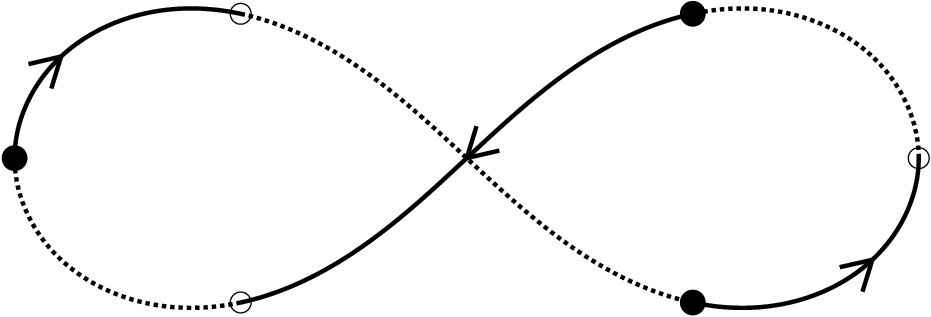}

(2) $\bm{\omega}=(1, -1)$.
\end{minipage}

\caption{
Simple choreographies $\bm{z}_{\bm{\omega}}(t)$  
up to the equivalence $ \sim$ in the case $N=3$. 
}
\label{fig:numsoln=3}
\end{center}
\end{figure}

\begin{figure}[htbp]
\begin{center}

  \begin{minipage}[b]{0.48\columnwidth}
  \centering
\includegraphics[scale=0.2]{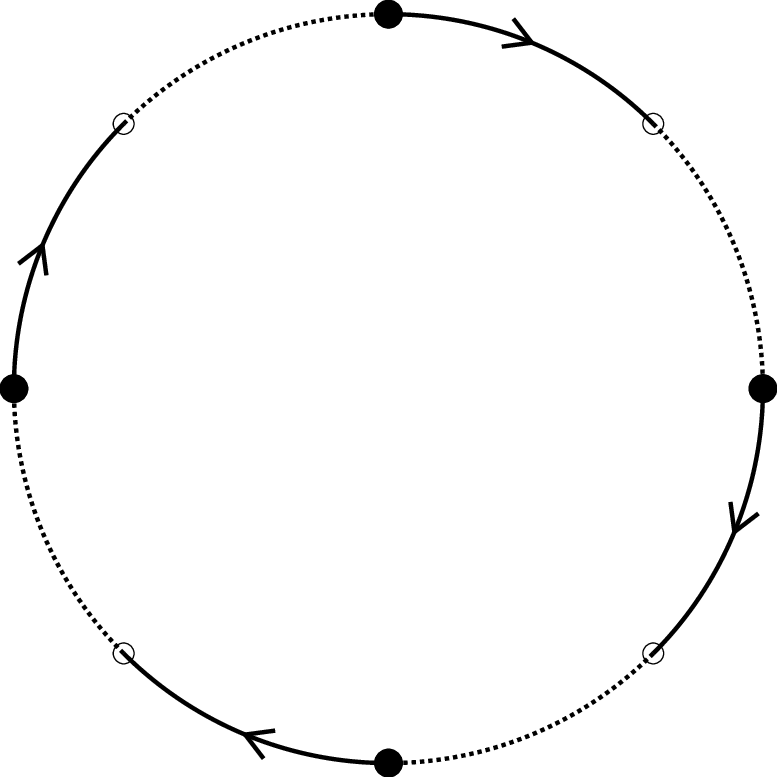} 

(1)  $\bm{\omega}=(1, 1, 1)$. 
\end{minipage}
  \begin{minipage}[b]{0.48\columnwidth}
  \centering
\includegraphics[scale=0.2]{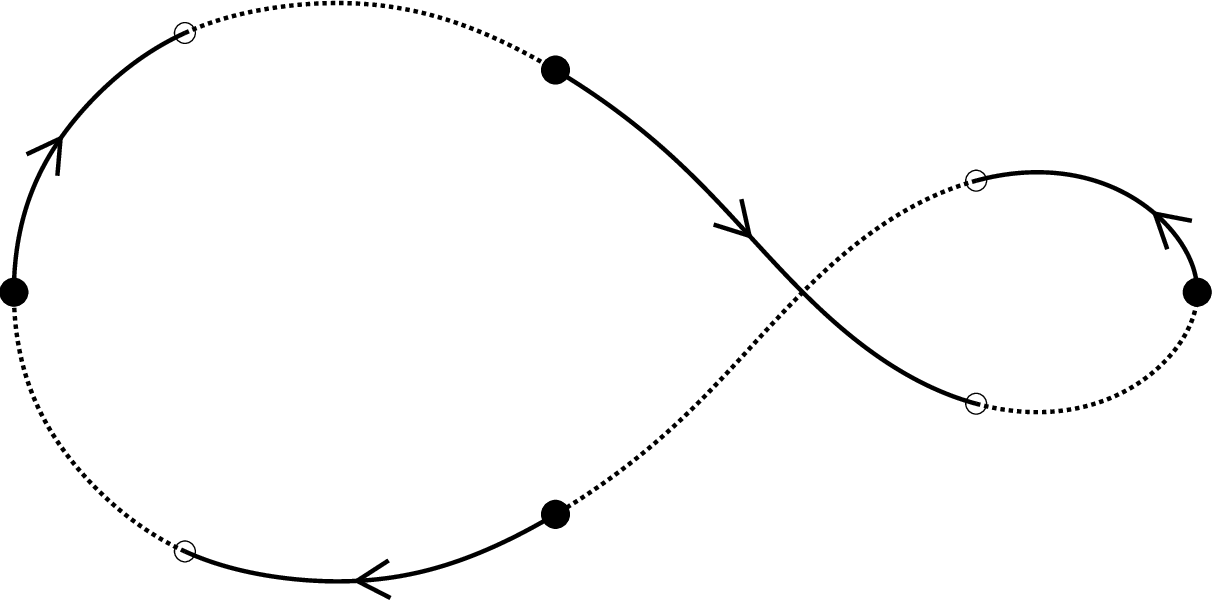} 

(2) $\bm{\omega}=(1, 1, -1)$. 
  \end{minipage}
\vspace{0.5cm}
  
  \begin{minipage}[b]{0.48\columnwidth}
  \centering
\includegraphics[scale=0.2]{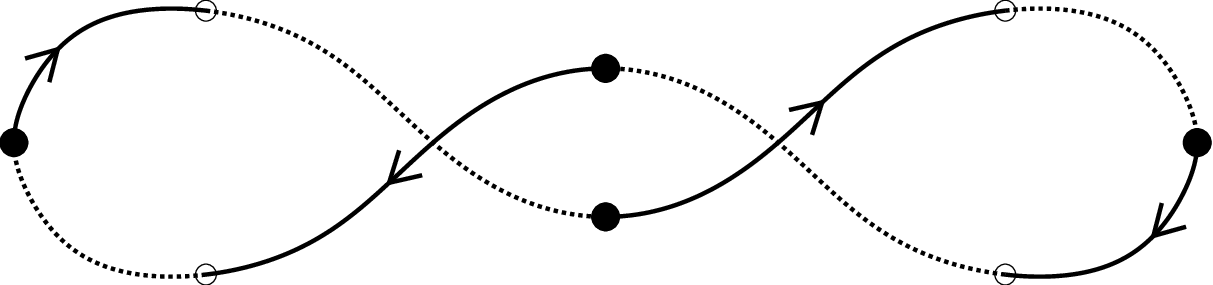} 

(3) $\bm{\omega}=(1, -1, 1)$.
\end{minipage}

\caption{
Simple choreographies $\bm{z}_{\bm{\omega}}(t)$ 
up to the equivalence $ \sim$ in the case $N=4$ . 
}

\label{fig:numsoln=4}
\end{center}
\end{figure}

\begin{figure}[htbp]
\begin{center}

  \begin{minipage}[b]{0.48\columnwidth}
  \centering
\includegraphics[scale=0.2]{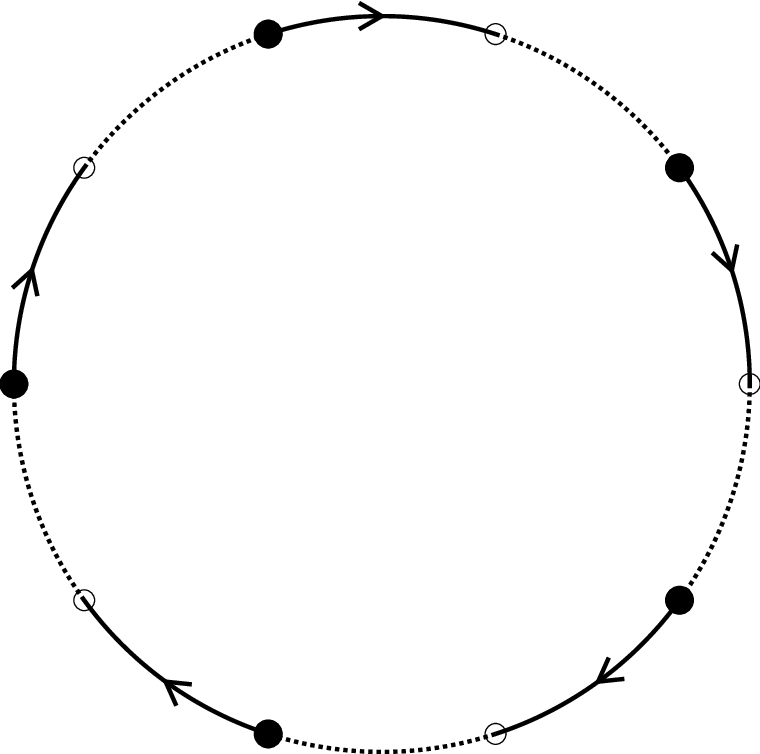} 

(1) $\bm{\omega}=(1, 1, 1,1)$. 
\end{minipage}
  \begin{minipage}[b]{0.48\columnwidth}
  \centering
\includegraphics[scale=0.2]{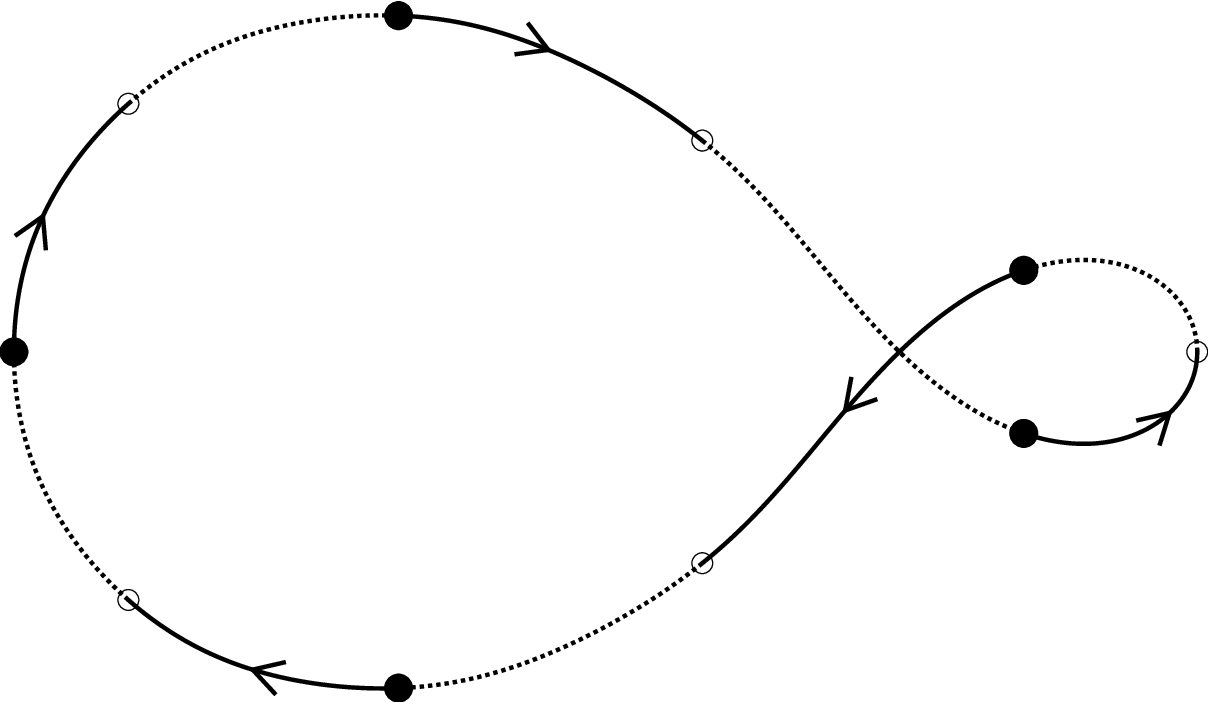} 

(2) $\bm{\omega}=(1, 1,1,  -1)$. 
\end{minipage}
\vspace{0.5cm}

  \begin{minipage}[b]{0.48\columnwidth}
  \centering
\includegraphics[scale=0.2]{n=5ppmp.eps} 

(3) $\bm{\omega}=(1, 1, -1, 1)$. 
\end{minipage}
\vspace{0.5cm}

  \begin{minipage}[b]{0.48\columnwidth}
  \centering
\includegraphics[scale=0.2]{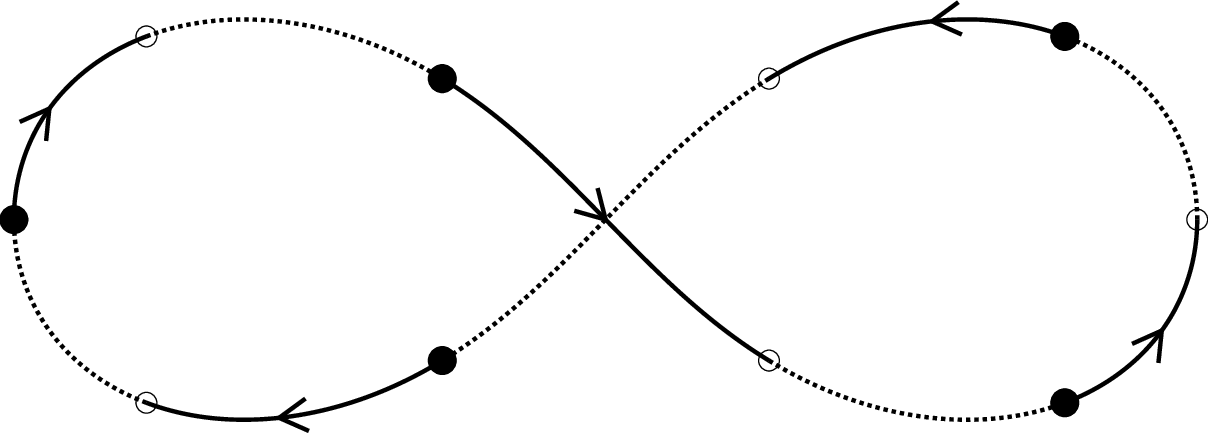} 

(4) $\bm{\omega}=(1, 1, -1, -1)$.
\end{minipage}
  \begin{minipage}[b]{0.48\columnwidth}
  \centering
\includegraphics[scale=0.2]{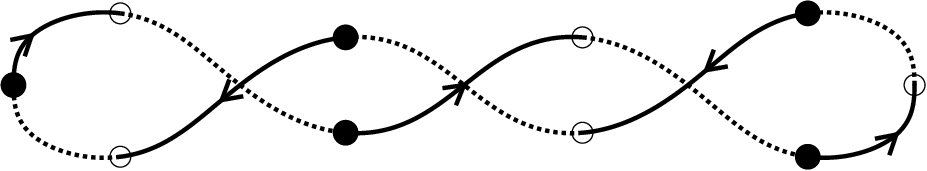} 

(5) $\bm{\omega}=(1, -1, 1, -1)$.
\end{minipage}
\vspace{0.5cm}

  \begin{minipage}[b]{0.48\columnwidth}
  \centering
\includegraphics[scale=0.2]{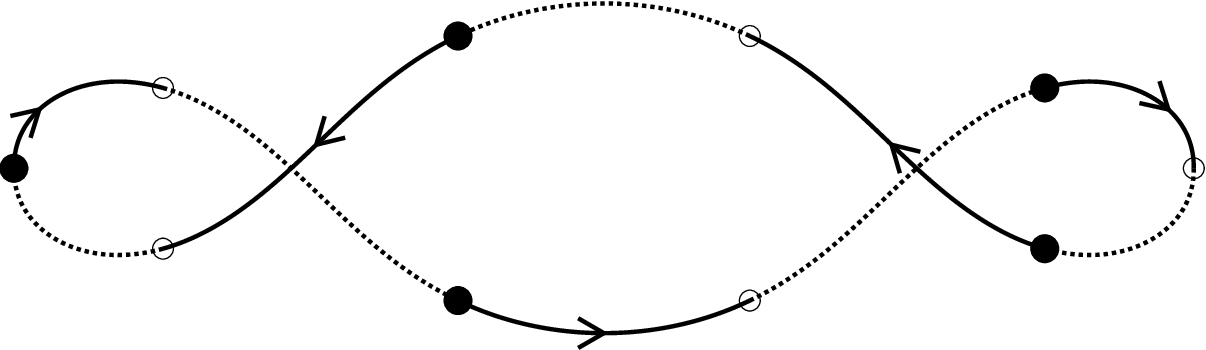} 

(6) $\bm{\omega}=(1, -1, -1, 1)$.
\end{minipage}

\caption{
Simple choreographies $\bm{z}_{\bm{\omega}}(t)$ 
up to the equivalence $ \sim$ in the case $N=5$.
}

\label{fig:numsoln=5}
\end{center}
\end{figure}

\begin{figure}[htbp]
\begin{center}

  \begin{minipage}[b]{0.48\columnwidth}
    \centering
\includegraphics[scale=0.2]{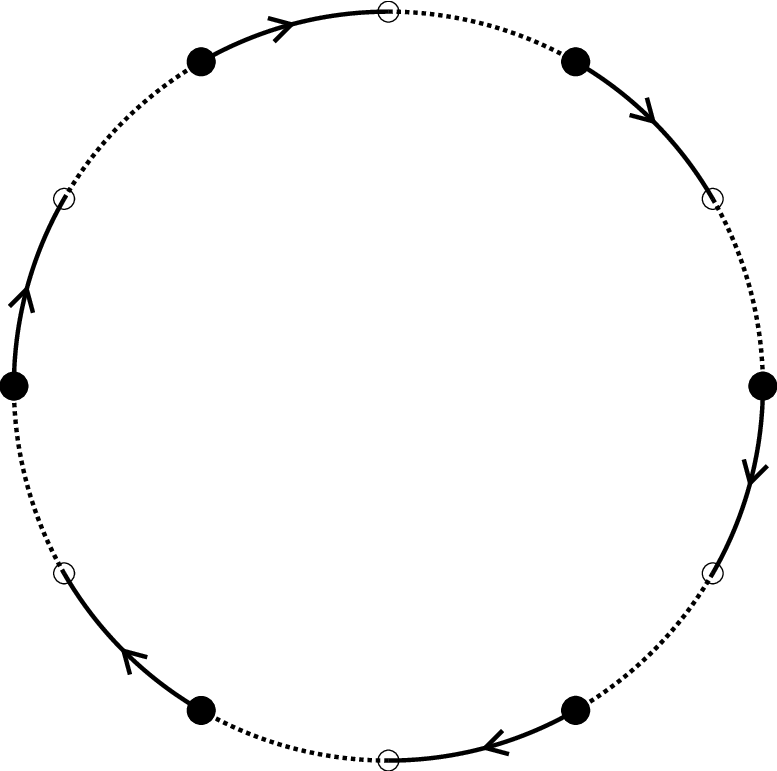} 

(1) $\bm{\omega}=(1, 1, 1,1, 1)$.  
\end{minipage}
  \begin{minipage}[b]{0.48\columnwidth}
  \centering
\includegraphics[scale=0.2]{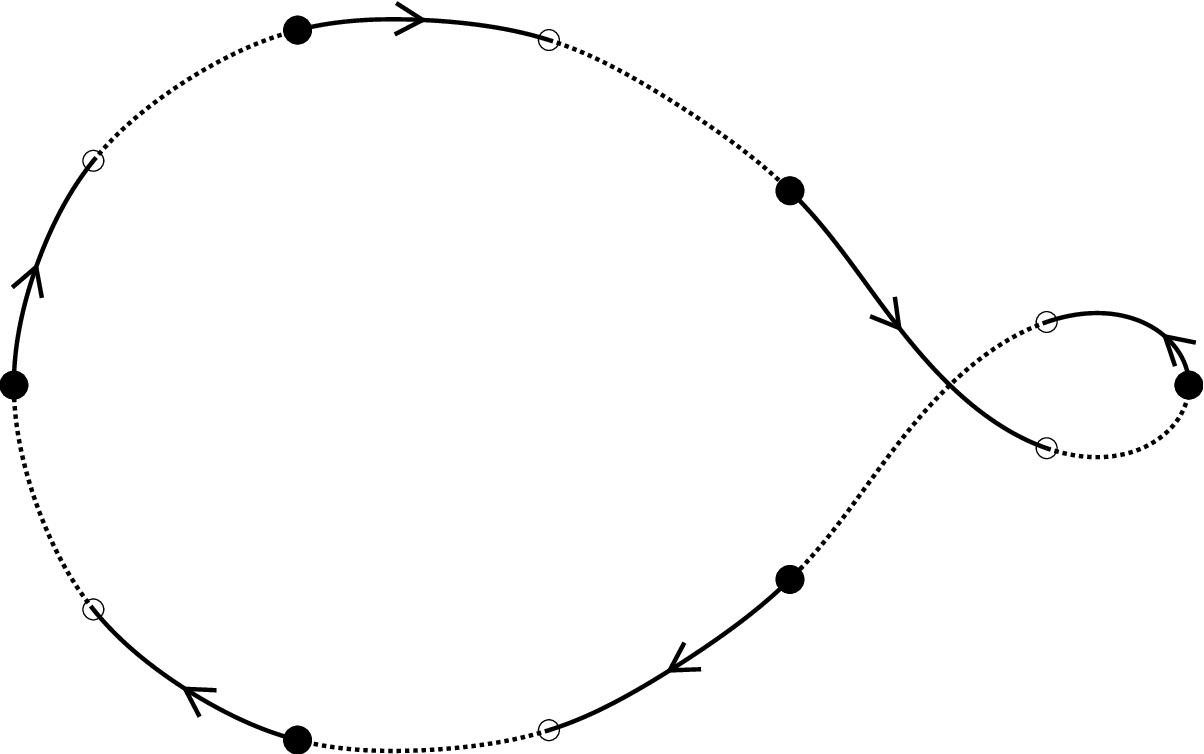} 

(2) $\bm{\omega}=(1, 1, 1,1, -1)$.
\end{minipage}
\vspace{0.5cm}

  \begin{minipage}[b]{0.48\columnwidth}
  \centering
\includegraphics[scale=0.2]{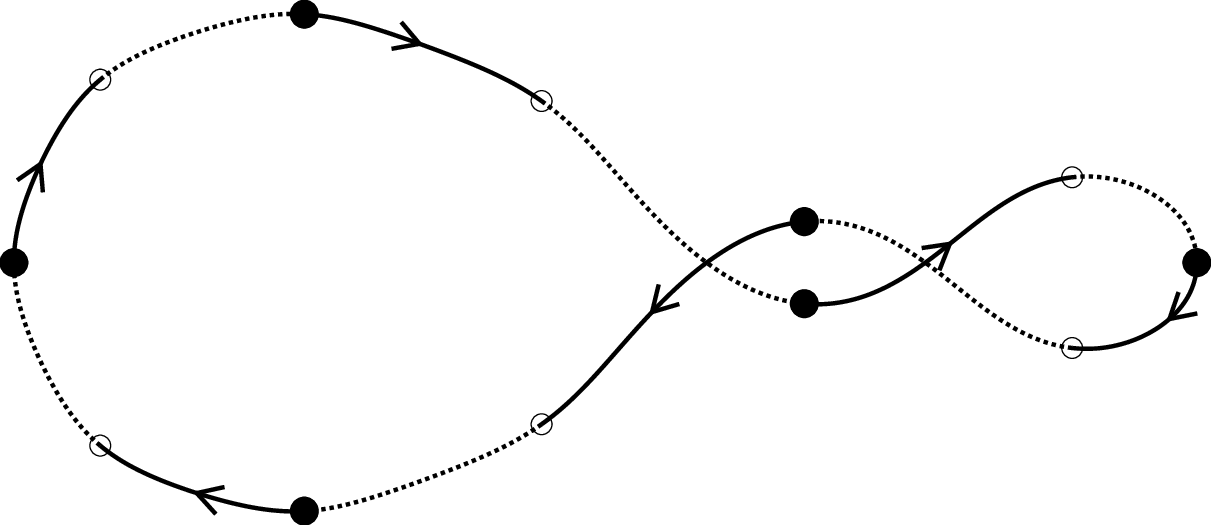} 

(3) $\bm{\omega}=(1, 1, 1,-1, 1)$.  
\end{minipage}
  \begin{minipage}[b]{0.48\columnwidth}
  \centering
\includegraphics[scale=0.2]{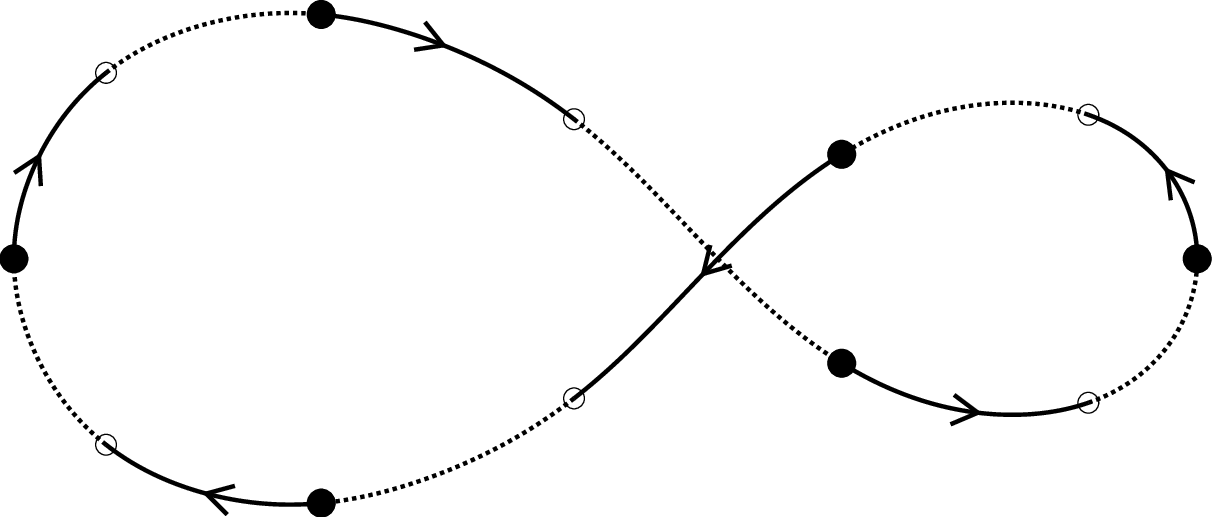} 

(4) $\bm{\omega}=(1, 1, 1,-1, -1)$.
\end{minipage}
\vspace{0.5cm}

  \begin{minipage}[b]{0.48\columnwidth}
  \centering
\includegraphics[scale=0.2]{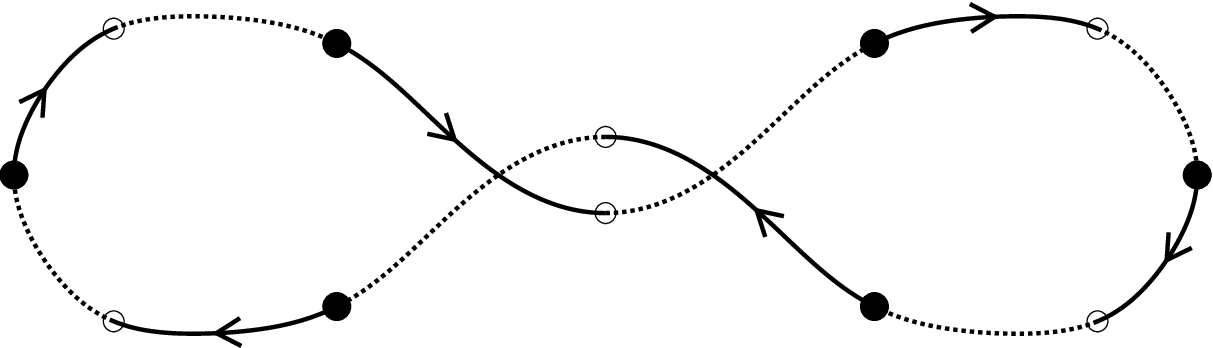} 

(5) $\bm{\omega}=(1, 1,-1,  1,1)$. 
\end{minipage}
  \begin{minipage}[b]{0.48\columnwidth}
  \centering
\includegraphics[scale=0.2]{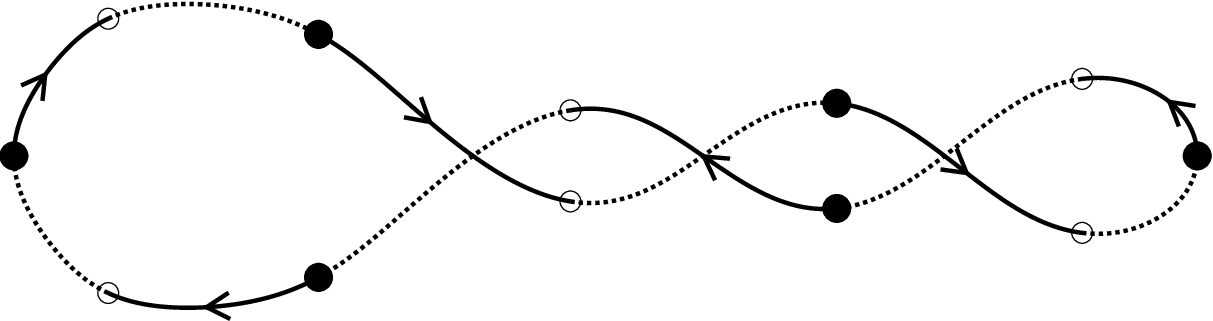} 

(6) $\bm{\omega}=(1, 1,-1,1, -1)$.
\end{minipage}
\vspace{0.5cm}

  \begin{minipage}[b]{0.48\columnwidth}
  \centering
\includegraphics[scale=0.2]{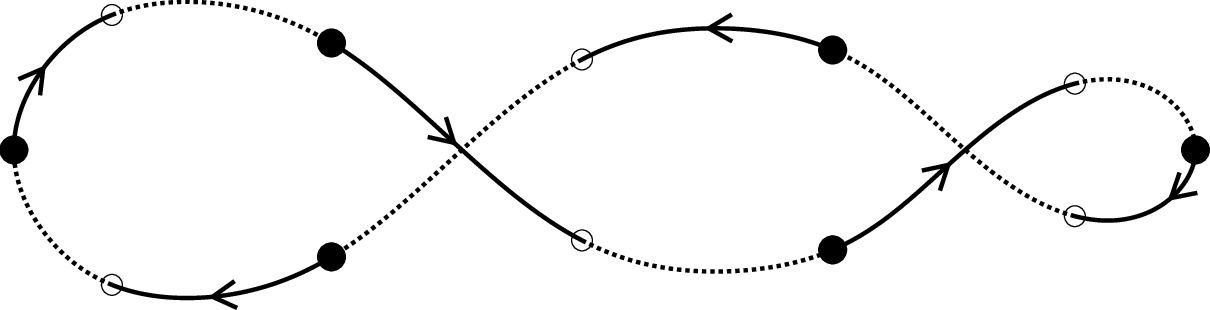} 

(7) $\bm{\omega}=(1, 1, -1,- 1, 1)$. 
\end{minipage}
  \begin{minipage}[b]{0.48\columnwidth}
  \centering
\includegraphics[scale=0.2]{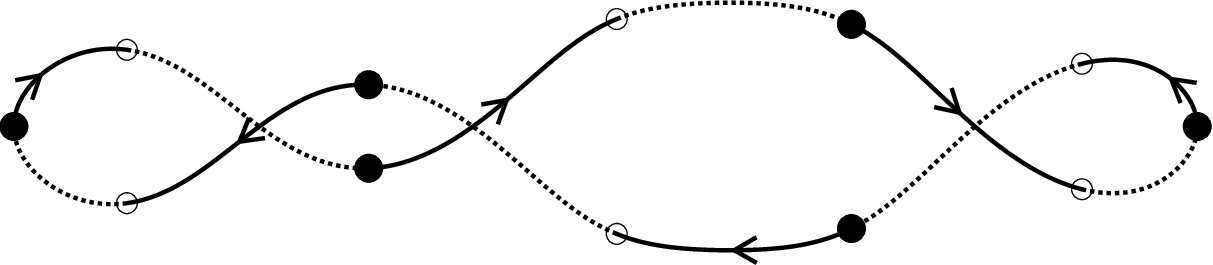} 

(8) $\bm{\omega}=(1, -1, 1,1, -1)$.
\end{minipage}
\vspace{0.5cm}

  \begin{minipage}[b]{0.48\columnwidth}
  \centering
\includegraphics[scale=0.2]{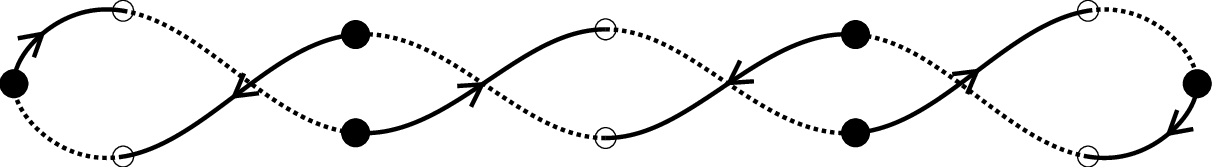} 

(9) $\bm{\omega}=(1, -1, 1,-1, 1)$.
\end{minipage}
  \begin{minipage}[b]{0.48\columnwidth}
  \centering
\includegraphics[scale=0.2]{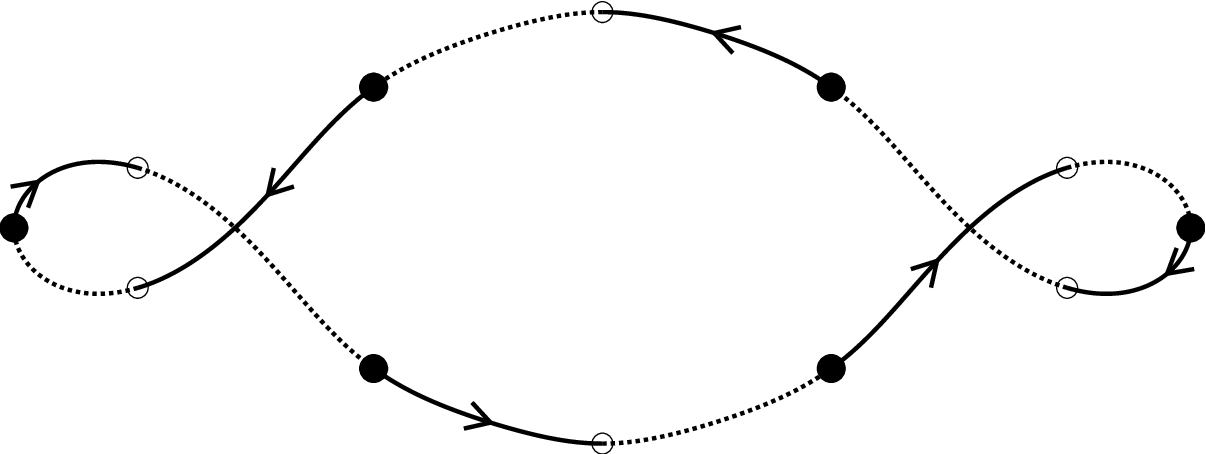} 

(10) $\bm{\omega}=(1, -1, -1,-1, 1)$.  
\end{minipage}

\caption{
Simple choreographies $\bm{z}_{\bm{\omega}}(t)$ 
up to the equivalence $ \sim$ in the case $N=6$.  
      }

\label{fig:numsoln=6}

\end{center}
\end{figure}

\begin{lemma}
\label{lem_stretchfactor-equivalent}
Suppose that $\bm{\omega}$ and   $\bm{\omega}'$ are equivalent. 
If the braid $\alpha_{\bm{\omega}}$ is pseudo-Anosov, 
then $\alpha_{\bm{\omega}'}$ is also pseudo-Anosov. 
Moreover, $\alpha_{\bm{\omega}}$ and $\alpha_{\bm{\omega}'}$ have 
the same stretch factor.
\end{lemma}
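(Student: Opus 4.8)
The plan is to reduce the statement to the two elementary operations that generate the equivalence relation, namely negation $\bm{\omega}\mapsto-\bm{\omega}$ and reversal $\bm{\omega}\mapsto\widehat{\bm{\omega}}$, since $-\widehat{\bm{\omega}}$ is obtained by composing the two and every equivalent $\bm{\omega}'$ lies in $\{\pm\bm{\omega},\pm\widehat{\bm{\omega}}\}$. For each operation I would show that $\alpha_{\bm{\omega}'}$ is conjugate in $B_N$ either to $\alpha_{\bm{\omega}}$ or to its inverse $\alpha_{\bm{\omega}}^{-1}$. Because the Nielsen--Thurston type and the stretch factor are conjugacy invariants in $\mathrm{MCG}(\Sigma_{0,N+1})$, and because a pseudo-Anosov braid and its inverse are simultaneously pseudo-Anosov and share the same stretch factor, this would give the conclusion at once.

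For the reversal operation I would combine Lemma~\ref{lem_reverse} with conjugation by the half twist $\Delta$. Recall the standard identity $\Delta\sigma_i\Delta^{-1}=\sigma_{N-i}$, so that conjugation by $\Delta$ realizes the index-reversal automorphism $\sigma_i\mapsto\sigma_{N-i}$. Applying this automorphism to $\mathrm{rev}(\alpha_{\bm{\omega}})=\sigma_{N-1}^{\omega_{N-1}}\cdots\sigma_1^{\omega_1}$ yields exactly $\sigma_1^{\omega_{N-1}}\sigma_2^{\omega_{N-2}}\cdots\sigma_{N-1}^{\omega_1}=\alpha_{\widehat{\bm{\omega}}}$; hence $\alpha_{\widehat{\bm{\omega}}}=\Delta\,\mathrm{rev}(\alpha_{\bm{\omega}})\,\Delta^{-1}$. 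Since $\mathrm{rev}(\alpha_{\bm{\omega}})$ is conjugate to $\alpha_{\bm{\omega}}$ by Lemma~\ref{lem_reverse}, it follows that $\alpha_{\widehat{\bm{\omega}}}$ is conjugate to $\alpha_{\bm{\omega}}$ in $B_N$.

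For the negation operation I would observe directly from the definition that $\alpha_{\bm{\omega}}^{-1}=\sigma_{N-1}^{-\omega_{N-1}}\cdots\sigma_1^{-\omega_1}=\mathrm{rev}(\alpha_{-\bm{\omega}})$. Lemma~\ref{lem_reverse} then gives that $\alpha_{-\bm{\omega}}$ is conjugate to $\mathrm{rev}(\alpha_{-\bm{\omega}})=\alpha_{\bm{\omega}}^{-1}$. Composing the two operations handles $-\widehat{\bm{\omega}}$: applying the negation step to $\widehat{\bm{\omega}}$ shows $\alpha_{-\widehat{\bm{\omega}}}$ is conjugate to $\alpha_{\widehat{\bm{\omega}}}^{-1}$, and taking inverses of the conjugacy produced in the reversal step shows $\alpha_{\widehat{\bm{\omega}}}^{-1}$ is conjugate to $\alpha_{\bm{\omega}}^{-1}$.

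Putting these together, every $\bm{\omega}'\sim\bm{\omega}$ yields $\alpha_{\bm{\omega}'}$ conjugate to $\alpha_{\bm{\omega}}$ or to $\alpha_{\bm{\omega}}^{-1}$; in either case, if $\alpha_{\bm{\omega}}$ is pseudo-Anosov then so is $\alpha_{\bm{\omega}'}$, with $\lambda(\alpha_{\bm{\omega}'})=\lambda(\alpha_{\bm{\omega}})$. The step requiring the most care is the negation case, since it forces us through the inverse rather than through a direct conjugacy; the argument stays clean only because the pseudo-Anosov property and the stretch factor are invariant under passing to the inverse. A secondary, routine verification is the conjugation identity $\Delta\sigma_i\Delta^{-1}=\sigma_{N-i}$, which I would either cite as standard from \cite{Birman74} or check directly on the given presentation of $\Delta$.
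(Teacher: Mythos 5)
Your proof is correct, but it takes a genuinely different route from the paper's. The paper splits the same three cases differently: it handles $\bm{\omega}' = -\bm{\omega}$ by a mirror argument ($\alpha_{-\bm{\omega}}$ is the mirror image of $\alpha_{\bm{\omega}}$, with every crossing reversed, and the pseudo-Anosov property and stretch factor are invariant under mirrors); it handles $\bm{\omega}' = -\widehat{\bm{\omega}}$ by the direct identity $\Delta\,\alpha_{\bm{\omega}}^{-1}\,\Delta^{-1} = \alpha_{-\widehat{\bm{\omega}}}$ (the same $\Delta$-conjugation you use, but applied to the inverse rather than to $\mathrm{rev}(\alpha_{\bm{\omega}})$), followed by invariance under inversion; and it recovers $\widehat{\bm{\omega}}$ by mirroring once more. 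Notably, the paper never invokes Lemma~\ref{lem_reverse} in this proof, whereas your argument uses it twice: once to convert $\mathrm{rev}(\alpha_{-\bm{\omega}}) = \alpha_{\bm{\omega}}^{-1}$ into the conjugacy $\alpha_{-\bm{\omega}} \sim \alpha_{\bm{\omega}}^{-1}$, and once to convert $\alpha_{\widehat{\bm{\omega}}} = \Delta\,\mathrm{rev}(\alpha_{\bm{\omega}})\,\Delta^{-1}$ into $\alpha_{\widehat{\bm{\omega}}} \sim \alpha_{\bm{\omega}}$. The trade-off is this: the paper's mirror step is shorter but appeals to an invariance under orientation-reversing conjugation that is asserted without proof and sits slightly outside the conjugacy framework set up in Section~\ref{subsection_Nielsen-Thurston}; your argument stays entirely within conjugation and inversion in $B_N$, whose invariance properties the paper does justify, at the cost of leaning on Lemma~\ref{lem_reverse} (whose general-$N$ proof the paper leaves to the reader) and the standard identity $\Delta\sigma_i\Delta^{-1} = \sigma_{N-i}$, which you correctly flag as needing either a citation to \cite{Birman74} or a direct check.
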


\begin{proof}
Let $\bm{\omega}= (\omega_1, \dots, \omega_{N-1})$. 
Suppose that $\bm{\omega}'= -\bm{\omega}$. 
The braid $\alpha_{-\bm{\omega}}$ is the mirror of $\alpha_{\bm{\omega}}$, 
i.e., 
$\alpha_{-\bm{\omega}}$ is obtained from $\alpha_{\bm{\omega}}$ 
by changing the sign of each crossing in $\alpha_{\bm{\omega}}$. 
Then the assertion holds, since the pseudo-Anosov property 
and the stretch factor are preserved under the mirror. 

Suppose that $\bm{\omega}'= -\widehat{\bm{\omega}} 
= (-\omega_{N-1}, -\omega_{N-2}, \dots, - \omega_1)$. 
Then 
$$\alpha_{- \widehat{\bm{\omega}}} = \sigma_1^{-\omega_{N-1}} 
\sigma_2^{-\omega_{N-2}} \cdots \sigma_{N-1}^{-\omega_1}.$$ 
On the other hand, 
the inverse of $\alpha_{\bm{\omega}}= 
\sigma_1^{\omega_1} \sigma_2^{\omega_2} \dots \sigma_{N-1}^{\omega_{N-1}}$ is given by 
$$\alpha_{\bm{\omega}}^{-1}= \sigma_{N-1}^{-\omega_{N-1}} 
\sigma_{N-2}^{-\omega_{N-2}} \cdots \sigma_1^{- \omega_1}.$$
Then 
$\Delta \alpha_{\bm{\omega}}^{-1} \Delta^{-1} 
= \alpha_{- \widehat{\bm{\omega}}}$, 
that is 
$\alpha_{-\widehat{\bm{\omega}}}$ is conjugate to 
$\alpha_{\bm{\omega}}^{-1}$. 
Clearly, if $\alpha_{\bm{\omega}}$ is a pseudo-Anosov braid, 
then $\alpha_{\bm{\omega}}^{-1}$ is also a pseudo-Anosov braid 
with the same stretch factor as that of $\alpha_{\bm{\omega}}$. 
Hence, the assertion follows 
since $\alpha_{-\widehat{\bm{\omega}}}$ is conjugate to $\alpha_{\bm{\omega}}^{-1}$.

Finally, we suppose that $\bm{\omega}'= \widehat{\bm{\omega}}$. 
Since $\alpha_{- \widehat{\bm{\omega}}}$ is the mirror of 
$\alpha_{\widehat{\bm{\omega}}}$, 
the assertion follows from the above two cases. 
This completes the proof. 
\end{proof}

\section{Proofs}
\label{section_proofs}

Let $Z_{\bm{\omega}}$ be the braid type of the 
 simple choreography $\bm{z}_{\bm{\omega}}(t)$ 
 for each $\bm{\omega} \in \Omega_N$. 
For the proof of Theorem~\ref{thm_main}, 
we first prove the following result which tells us a 
 representative of $Z_{\bm{\omega}}$.

\begin{theorem}
\label{thm_braid-type-omega}
For each $\bm{\omega}= (\omega_1, \dots, \omega_{N-1}) \in \Omega_N$, 
the primitive braid type of the simple choreography $\bm{z}_{\bm{\omega}}(t)$ 
is given by 
$\alpha_{- \bm{\omega}}= \sigma_1^{-\omega_1} \sigma_2^{-\omega_2} 
 \cdots\sigma_{N-1}^{-\omega_{N-1}}$. 
In particular, the braid type $Z_{\bm{\omega}}$ of $\bm{z}_{\bm{\omega}}(t)$  is represented by 
$(\alpha_{- \bm{\omega}})^N$. 
\end{theorem}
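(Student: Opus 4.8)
The plan is to compute the primitive braid directly from the geometry of Yu's trajectory and then pass to its braid type using the conjugacy results of Section~\ref{subsection_pAfamily}. Since the period of $\bm{z}_{\bm{\omega}}(t)$ is $N$, the primitive period equals $1$, so the primitive braid is $b = b(\bm{z}_{\bm{\omega}}([0,1]))$. The starting observation is that, by Theorem~\ref{thm:yu_submain} together with the symmetries \eqref{eq:origin0} and \eqref{eq:conjugate_symmetry} and the choreography relation $z_j(t)=z_0(t+j)$, at the three times $t=0,\tfrac12,1$ the $N$ base points lie on finitely many lines parallel to the coordinate axes (the ``solid/dotted'' lines described after Example~\ref{ex:N=4}), with pairs of strands stacked vertically about the $x$-axis. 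I would therefore split $[0,1]$ at its midpoint and analyze the two half-steps $[0,\tfrac12]$ and $[\tfrac12,1]$ separately.

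For each half-step I would read off, from the crossing information in Remark~\ref{rem:number_closedcurve} and the monotonicity $\mathrm{Re}(\dot z_0)>0$ on $(0,N/2)$, which pairs of spatially adjacent strands exchange their horizontal order. The key structural claim is that one half-step exchanges precisely the adjacent pairs at odd spatial positions while leaving the even ones fixed, and the other half-step does the reverse; this alternating ``brick-wall'' pattern is exactly what the braids $o_{\bm{\omega}}$ and $e_{\bm{\omega}}$ of \eqref{equation_3kinds_of_braids} encode. For each individual crossing I would fix its sign by recording whether the swap occurs in the upper or the lower half-plane, which is governed by the $\bm{\omega}$-topological constraint $\mathrm{Im}(z_0(i/2))=\omega_i|\mathrm{Im}(z_0(i/2))|$. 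Reconciling this with the orientation convention for $\sigma_i$ in Figure~\ref{fig_generator-sphere}(1) produces a sign reversal, so the $i$th crossing contributes $\sigma_i^{-\omega_i}$. Assembling the two half-steps then exhibits the primitive braid as $e_{-\bm{\omega}}\,o_{-\bm{\omega}}$, up to the choice of $t_0$ and the reading direction of time.

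To finish the first assertion I would invoke Lemma~\ref{lem_oe-alpha}, which says $e_{-\bm{\omega}} o_{-\bm{\omega}}$ is conjugate in $B_N$ to $\alpha_{-\bm{\omega}}=\sigma_1^{-\omega_1}\cdots\sigma_{N-1}^{-\omega_{N-1}}$; any ambiguity in the order of the two factors or a time-reversal is absorbed by Lemmas~\ref{lem_oe-alpha} and \ref{lem_reverse}, since all the resulting words are mutually conjugate. As the braid type is by Definition~\ref{def_braid-type} a conjugacy class, the primitive braid type of $\bm{z}_{\bm{\omega}}(t)$ is represented by $\alpha_{-\bm{\omega}}$. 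For the ``in particular'' clause I would use Section~\ref{subsection_particle-dances}: the $N$th power $b^N$ equals $b(\bm{z}_{\bm{\omega}}([0,N]))$ and represents $Z_{\bm{\omega}}$. Combining $b=e_{-\bm{\omega}}o_{-\bm{\omega}}$ with the power statement of Lemma~\ref{lem_oe-alpha} gives that $b^N=(e_{-\bm{\omega}}o_{-\bm{\omega}})^N$ is conjugate to $(\alpha_{-\bm{\omega}})^N$, so $Z_{\bm{\omega}}$ is represented by $(\alpha_{-\bm{\omega}})^N$.

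I expect the main obstacle to be the rigorous bookkeeping in the half-step analysis: verifying that the crossings genuinely decouple into the odd and even families with no accidental extra crossings, and pinning down the sign of each crossing. The sign computation is the delicate point, since it requires translating the analytic datum (the sign of $\mathrm{Im}\,z_0$ at the half-integer times) into the topological crossing sign, and it is precisely here that the reversal $\omega_i\mapsto-\omega_i$ must be produced carefully. Handling the slightly different line patterns for $N$ even and $N$ odd, and treating the endpoints $t=0,\tfrac{N}{2}$ where the horizontal velocity vanishes, will require some attention, but these should affect only the labelling and not the combinatorial structure of the braid.
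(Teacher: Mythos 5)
Your proposal is correct and follows essentially the same route as the paper's own proof: reading off the primitive braid from the crossings at half-integer times, with the crossing signs dictated (after a sign reversal) by the $\bm{\omega}$-topological constraints, so that the primitive braid becomes $e_{-\bm{\omega}}\,o_{-\bm{\omega}}$, and then invoking Lemma~\ref{lem_oe-alpha} together with the power argument for $Z_{\bm{\omega}}$. The only notable difference is technical: the paper shifts the time interval to $[\epsilon,1+\epsilon]$ so that all $N$ base points have distinct real parts before projecting onto the $xt$-plane, which is exactly the device your endpoint caveat at $t=0,\tfrac{N}{2}$ would need in order to make the half-step read-off rigorous.
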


\begin{figure}[htbp]
\begin{center}
\includegraphics[width=4in]{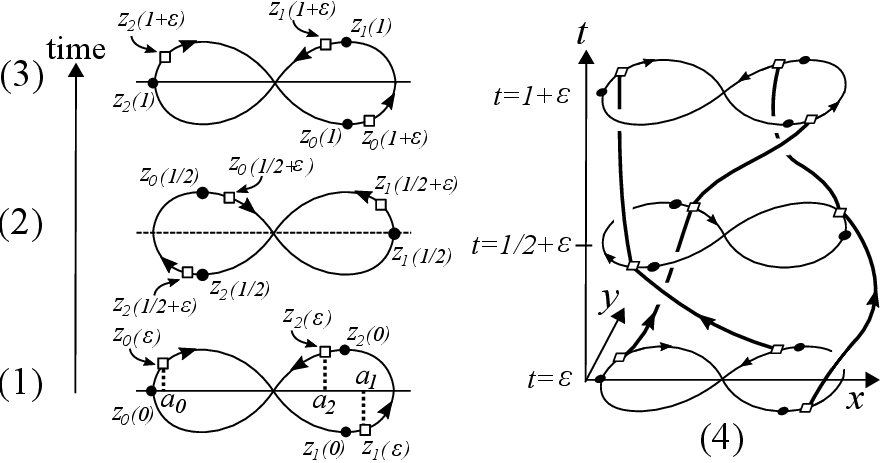}
\caption{
Case $N=3$, $\bm{\omega}= (1, -1)$. 
(1) $\bm{z}_{\bm{\omega}}(\epsilon)$. 
($a_i$ is the projection of $z_i(\epsilon)$ for $i= 0,1,2$.)
(2) 
$\bm{z}_{\bm{\omega}}(\frac{1}{2}+\epsilon)$. 
(3) 
$\bm{z}_{\bm{\omega}}(1+ \epsilon)=\bm{z}_{\bm{\omega}}(\epsilon)$. 
(4) Braid $b_{[\epsilon, 1+ \epsilon]} 
= b(\bm{z}_{\bm{\omega}}([\epsilon, \epsilon+1]))$.}
\label{fig_1_-1}
\end{center}
\end{figure}

\begin{figure}[htbp]
\begin{center}
\includegraphics[width=4in]{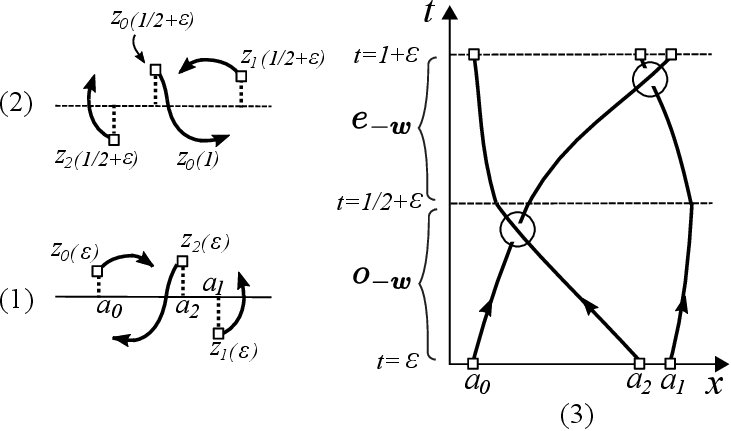}
\caption{Case $N=3$, $\bm{\omega}= (1, -1)$. 
(1) $\bm{z}_{\bm{\omega}}([\epsilon, \frac{1}{2}+ \epsilon])$. 
(2) $\bm{z}_{\bm{\omega}}([\frac{1}{2}+\epsilon, 1+ \epsilon])$. 
(3) Projection $\overline{b_{[\epsilon, 1+ \epsilon]}}$  on the $xt$-plane. 
$\overline{b_{[\epsilon, 1+ \epsilon]}}= e_{- \bm{\omega}} \cdot 
o_{- \bm{\omega}}= \sigma_2 \cdot \sigma_1^{-1}$ in this case. 
Small circles indicate the double points.}
\label{fig_construction}
\end{center}
\end{figure}

\begin{proof}
The simple choreography $\bm{z}_{\bm{\omega}}(t)$ 
associated with $\bm{\omega} \in \Omega_N$ 
has the period $N$ and  the primitive period $1$.  
Choosing a small $\epsilon >0$, 
we  consider a primitive braid 
$b(\bm{z}_{\bm{\omega}}([\epsilon, \epsilon+1]))$ of $\bm{z}_{\bm{\omega}}(t)$ (see Section~\ref{subsection_particle-dances}).
Figure~\ref{fig_1_-1} illustrates 
$b(\bm{z}_{\bm{\omega}}([\epsilon, \epsilon+1]))$ 
in the case $\bm{\omega}=(1, -1)$. 
For simplicity, we write 
$$b_{[ \epsilon, 1+ \epsilon]}:= b(\bm{z}_{\bm{\omega}}([\epsilon, \epsilon+1])).$$
We now prove that 
the braid type $\langle b_{[ \epsilon, 1+ \epsilon]} \rangle$ 
is given by $\alpha_{- \bm{\omega}}$
(see (\ref{equation_alpha}) for the braid $\alpha_{- \bm{\omega}}$). 
By (\ref{eq:simple_choreo}) and (\ref{eq:simple_choreo_1}), 
it holds 
$$z_{j}(0)=\bar{z}_{N-1-j}(1)= \bar{z}_{N-j}(0)\  \mbox{for}\ 
j \in \{0, 1, \dots, N-1\}.$$
Hence  we have 
$$\mathrm{Re}(z_{j}(0))= \mathrm{Re}({z}_{N-j}(0)) \ \mbox{for}\ 
j \in \{0, 1, \dots, N-1\}.
$$
If $\epsilon>0$ is small, then 
$\mathrm{Re}(z_{j}(\epsilon)) \ne  \mathrm{Re}({z}_{N-j}(\epsilon))$ 
by \eqref{eq:simple_choreo_2}. 
Moreover 
by the properties of the simple choreography $\bm{z}_{\bm{\omega}}(t)$ 
(see Section~\ref{section_Yu}), 
if $i,j \in \{0, \dots, N-1\}$ with $i \ne j$, then 
$\mathrm{Re}(z_{i}(\epsilon)) \ne  \mathrm{Re}({z}_{j}(\epsilon))$.

Let us set $\mathrm{Re}(z_i(\epsilon)) = p_i$ and 
let $a_i= (p_i, 0) \in {\Bbb R} \times \{0\}$ be the projection of $z_i(\epsilon) \in {\Bbb C} \simeq {\Bbb R}^2$ on the first component. 
Then $ \{a_0, \dots, a_{N-1}\} \subset {\Bbb R} \times \{0\}$ 
is a set of $N$ points, which is denoted by $A_N$. 
 Consider the projection 
 of the braid $b_{[ \epsilon, 1+ \epsilon]}$ onto the $xt$-plane. 
 Note that the projection  contains at most double points, 
 since the periodic orbits is a simple choreography and 
 the closed curve $z_0([0, N])$ on which $N$ particles lie 
 contain at most double points. 
 At each double point of the projection, we indicate the over/under crossings 
 determined by the braid 
 $b_{[ \epsilon, 1+ \epsilon]}$, as shown in Figure~\ref{fig_construction}(3). 
Then the result is a braid (with base points $A_N$)  
denoted by $\overline{b_{[ \epsilon, 1+ \epsilon]}} 
=  \overline{b(\bm{z}_{\bm{\omega}}([\epsilon, \epsilon+1]))}$. 
We also call the braid $\overline{b_{[ \epsilon, 1+ \epsilon]}}$ 
the {\it projection} of  $b_{[ \epsilon, 1+ \epsilon]}$. 
Note that  $\overline{b_{[ \epsilon, 1+ \epsilon]}}$ has the same braid type as 
$b_{[ \epsilon, 1+ \epsilon]}$.

\medskip
\noindent
{\bf Claim.}
For each $\bm{\omega} \in \Omega_N$ 
the braids $\overline{b_{[ \epsilon, 1+ \epsilon]}} 
(= \overline{b(\bm{z}_{\bm{\omega}}([\epsilon, \epsilon+1]))})$ 
and   $\alpha_{\bm{-\omega}} $ are conjugate in $B_N$. 
\medskip

\noindent
Proof of Claim. 
We consider the trajectory $\bm{z}_{\bm{\omega}}([0, \frac{N}{2}])$. 
See Figure~\ref{fig_trajectory}. 
We focus on the motion of $0$th particle $z_0(t)$. 
Recall that  $z_0(t)$ satisfies the two conditions (1) and (2) in Theorem~\ref{thm:yu_submain}. 
Between $t=0$ and $t= \frac{N}{2}$, 
the $0$th particle passes by  all other $N-1$ particles. 
More precisely, for each $j=1, \dots, N-1$, 
when $z_0(t)$ and the $(N-j)$th particle $z_{N-j}(t)$ pass each other,  
they  lie on the same vertical line at $t= \frac{j}{2}$, 
i.e., $\mathrm{Re}(z_0(j/2))= \mathrm{Re} (z_{N-j}(j/2))$. 
The $\bm{\omega}$-topological constraints (Definition~\ref{def_constraints}) 
tell us the following inequality 
about the imaginary part: 
\begin{eqnarray*}
\mathrm{Im}(z_0(\tfrac{j}{2})) &>& 
\mathrm{Im}(z_{N-j}(\tfrac{j}{2})) \hspace{5mm} \mbox{if} \ \omega_j =1, 
\\
\mathrm{Im}(z_0(\tfrac{j}{2})) &<& 
\mathrm{Im}(z_{N-j}(\tfrac{j}{2})) \hspace{5mm} \mbox{if} \ \omega_j =-1.
\end{eqnarray*}
This means that 
$z_0(t)$ passes over $z_{N-j}(t)$ at $t= \tfrac{j}{2}$ 
if $\omega_j=1$. 
Otherwise, it passes under $z_{N-j}(t)$ at $t= \tfrac{j}{2}$. 
These together with 
the choreographic condition (\ref{eq:simple_choreo}) 
enables us to 
read off the braid word from $\overline{b_{[ \epsilon, 1+ \epsilon]}}$, 
as we will see now. 

\begin{figure}[htbp]
\begin{center}
\includegraphics[width=4.8in]{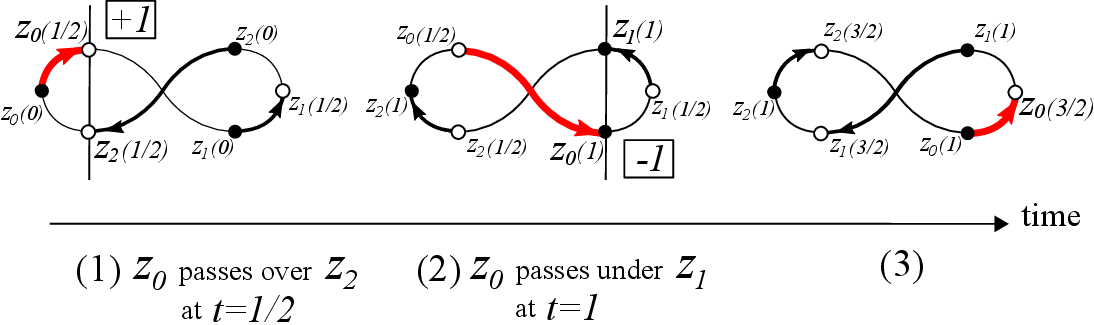}
\caption{
$\bm{z}_{\bm{\omega}}([0, \frac{N}{2}])$ when $N=3$, $\bm{\omega}= (1, -1)$. 
In this case, 
the $0$th particle 
$z_0(t)$ passes over $z_2(t)$ at $t= \frac{1}{2}$. 
Then it passes under $z_1(t)$ at $t= 1$. 
(1) $\bm{z}_{\bm{\omega}}([0, \frac{1}{2}])$. 
(2) $\bm{z}_{\bm{\omega}}([\frac{1}{2}, 1])$. 
(3) $\bm{z}_{\bm{\omega}}([1, \frac{3}{2}])$.}
\label{fig_trajectory}
\end{center}
\end{figure}

Recall that $o_{\bm{\omega}}$, $e_{\bm{\omega}}$ are braids for 
$\bm{\omega}= (\omega_1, \dots, \omega_{N-1}) \in \Omega_N$ 
as in (\ref{equation_3kinds_of_braids}). 
We claim that 
$\overline{b_{[ \epsilon, 1+ \epsilon]}}$ is written by 
$$\overline{b_{[ \epsilon, 1+ \epsilon]}}
= e_{-\bm{w}} \cdot o_{-\bm{w}} 
= \prod_{\substack{i \in \{1, \dots,N-1\} \\ i\ \text{even}  }} 
\sigma_i^{-\omega_i} 
\prod_{\substack{j \in \{1, \dots,N-1\} \\ j\ \text{odd}  }}
\sigma_j^{-\omega_j}.$$ 
See (\ref{equation_3kinds_of_braids}) for braids 
$e_{-\bm{w}}$ and $ o_{-\bm{w}} $. 
In fact, if $j$ is odd and $\omega_j= 1$ (resp. $\omega_j= -1$), 
then the braid word $\sigma_j^{-1}$ (resp. $\sigma_j^{+1}$) in $o_{-\bm{w}}$ 
appears in $\overline{b_{[ \epsilon, 1+ \epsilon]}}$
by the fact that 
$z_0(t)$ and $z_{N-j}(t)$ meet at the same vertical line at 
$t= \frac{j}{2}$ with the inequality 
$\mathrm{Im}(z_0(\tfrac{j}{2})) > \mathrm{Im}(z_{N-j}(\tfrac{j}{2}))$ 
(resp. $\mathrm{Im}(z_0(\tfrac{j}{2})) < \mathrm{Im}(z_{N-j}(\tfrac{j}{2}))$)
Similarly, 
if $i$ is even and $\omega_i= 1$ (resp. $\omega_i= -1$), 
then the braid word $\sigma_i^{-1}$ (resp. $\sigma_i^{+1}$) in  
$e_{-\bm{\omega}}$ 
appears in $\overline{b_{[ \epsilon, 1+ \epsilon]}}$. 
See Figures~\ref{fig_construction}(3).

By Lemma~\ref{lem_oe-alpha}, 
$ e_{-\bm{w}}o_{-\bm{w}} (= \overline{b_{[ \epsilon, 1+ \epsilon]}}) $ is conjugate to $\alpha_{- \bm{\omega}}$. 
This completes the proof of Claim. 
\medskip

By Claim, the primitive braid type of the simple choreography 
$\bm{z}_{\bm{\omega}}(t)$
is given by $\alpha_{-\bm{\omega}}$. 
Thus the braid type $Z_{\bm{\omega}}$ of $\bm{z}_{\bm{\omega}}(t)$ 
is given by $(\alpha_{- \bm{\omega}})^N$. 
This completes the proof of Theorem~\ref{thm_braid-type-omega}. 
\end{proof}

We are now ready to prove Theorem~~\ref{thm_main}. 

\begin{proof}[Proof of Theorem~\ref{thm_main}]
Given $\bm{\omega} \in \Omega_N$, 
we consider the simple choreography $\bm{z}_{- \bm{\omega}}(t)$ corresponding to 
$- \bm{\omega} \in \Omega_N$. 
By Theorem~\ref{thm_braid-type-omega}, 
$\alpha_{- (- \bm{\omega})} = \alpha_{\bm{\omega}}$ 
(resp. the $N$th power 
$(\alpha_{\bm{\omega}})^N$) represents the primitive braid type 
(resp. braid type $Z_{- \bm{\omega}}$) of the solution $\bm{z}_{- \bm{\omega}}(t)$. 
This together with Theorem~\ref{thm:yu_submain} implies 
the former statement of Theorem~\ref{thm_main}.

We turn to the latter statement. 
By Lemma~\ref{lem_stretchfactor-equivalent}, 
we may assume that 
$\bm{\omega}= (1, \omega_2, \dots, \omega_{N-1}) \in \Omega_N^+$. 
Suppose that  $\bm{\omega}=(1, 1, \dots, 1)$.  
In this case, 
by Example~\ref{ex_correspondence}(2) and Lemma~\ref{lem_periodic-braid}, 
the braid type $Z_{\bm{\omega}}$ is periodic.

Suppose that 
there exists $i \in \{2, \dots, N-1\}$ 
such that $\omega_i= -1$. 
Let $\Theta: \Psi_{N-1} \rightarrow \Omega_N^+$ 
be the bijection given in Lemma~\ref{lem_bijection}. 
Then $\Theta^{-1}(\bm{\omega}) \in \Psi_{N-1}$ is a composition of $N-1$ 
corresponding to the $(k+1)$-tuple of positive integers with $k>0$. 
Recall that $\beta_{\Theta^{-1}(\bm{\omega})} = \alpha_{\bm{\omega}}$ 
by Lemma~\ref{lem_alpha-beta}, 
and this is a pseudo-Anosov braid by Theorem~\ref{thm_recursive-example}. 

The $N$th power $(\alpha_{\bm{\omega}})^N$ that represents the braid type $Z_{\bm{\omega}}$ is also pseudo-Anosov, 
since the pseudo-Anosov property is preserved under the power. 
This completes the proof. 
\end{proof}

We recall the elements 
$\bm{\omega}_{\max}, \bm{\omega}_{\min} \in \Omega_N$ 
defined in Section~\ref{section_introduction}. 
The element $\bm{\omega}_{\min}$ can be written by 
$\bm{\omega}_{\min}=  \Theta(\bm{m})$ 
by using the bijection $\Theta: \Psi_{N-1} \rightarrow \Omega_N^+$, 
where 
$\bm{m}= (n,n) \in \Psi_{N-1}$ if $N= 2n+1$ and 
$\bm{m}= (n,n-1) \in \Psi_{N-1}$ if $N=2n$.

\begin{proof}[Proof of Theorem~\ref{thm_omega_max-min}]
Suppose that $\bm{\omega} \in \Omega_N$ is neither 
$(1,1, \dots, 1)$ nor $(-1,-1, \dots, -1)$. 
Then the braid $\alpha_{\bm{\omega}} \in B_N$ is pseudo-Anosov 
by Lemmas~\ref{lem_alpha-beta} and  \ref{lem_stretchfactor-equivalent}. 
By Lemma~\ref{lem_stretchfactor-equivalent} again, 
$\alpha_{\bm{\omega}}$ and $\alpha_{-\bm{\omega}}$ have 
the same stretch factor. 
By definitions of $\bm{\omega}_{\max}$ and $\bm{\omega}_{\min}$, 
we have 
$\alpha_{\bm{\omega}_{\max}}= \beta_{\bm{1}_{N-1}}$. 
Moreover 
$\alpha_{\bm{\omega}_{\min}} = \beta_{(n,n)}$ 
when $N= 2n+1$ 
(resp. $\alpha_{\bm{\omega}_{\min}}= \beta_{(n, n-1)}$ when $N= 2n$). 
The assertion follows from 
Theorems~\ref{thm_max-mini} and \ref{thm_braid-type-omega}. 
\end{proof}

\begin{example}
\label{ex_figure-8}
Suppose that $\bm{\omega} = (1, -1) \in \Omega_3$. 
The braid type of the figure-eight solution of the planar Newtonian $3$-body problem 
is the same as that of the simple choreography $\bm{z}_{\bm{\omega}}(t)$. 
See Remark~\ref{rem_figure-super-8}(1)(3). 
Hence, $(\alpha_{- \bm{\omega}})^3= (\sigma_1^{-1} \sigma_2)^3$ 
is a representative of the braid type of the figure-eight solution. 
By (\ref{equation_stretchfactor-power}), 
its stretch factor $\lambda((\sigma_1^{-1} \sigma_2)^3)$ 
equals $(\lambda(\sigma_1^{-1} \sigma_2))^3= (\tfrac{3+ \sqrt{5}}{2})^3$. 
\end{example}

\begin{corollary}
\label{cor_super8} 
The super-eight solution of the planar Newtonian $4$-body problem 
has the pseudo-Anosov braid type with the stretch factor 
$(2+ \sqrt{3})^4$. 
\end{corollary}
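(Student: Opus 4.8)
The plan is to reduce the super-eight to a choreography already treated in the text, exactly as Example~\ref{ex_figure-8} handles the figure-eight, and then to extract the stretch factor from the recursive formula. The crux is the identification of braid types: the super-eight of the planar Newtonian $4$-body problem and the simple choreography $\bm{z}_{\bm{\omega}}(t)$ for $\bm{\omega}=(1,-1,1) \in \Omega_4$ both trace a chain of three loops symmetric about the coordinate axes (Example~\ref{ex:N=4}). By Remark~\ref{rem_figure-super-8}(3), imposing Yu's additional symmetries on the $\bm{\omega}=(1,-1,1)$ solution produces precisely the super-eight, so the two share the braid type $Z_{(1,-1,1)}$. Granting this, I would invoke Theorem~\ref{thm_braid-type-omega}, which furnishes the representative $(\alpha_{-\bm{\omega}})^4 = (\sigma_1^{-1}\sigma_2\sigma_3^{-1})^4$ for $Z_{(1,-1,1)}$.

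First I would record that the braid type is pseudo-Anosov: since $\bm{\omega}=(1,-1,1)$ is neither $(1,1,1)$ nor $(-1,-1,-1)$, Theorem~\ref{thm_main} applies, its underlying input being Lemma~\ref{lem_alpha-beta} together with Theorem~\ref{thm_recursive-example}, and the pseudo-Anosov property passes to the fourth power.

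The stretch factor is then a short chain of reductions. By Lemma~\ref{lem_stretchfactor-equivalent}, $\alpha_{-\bm{\omega}}$ and $\alpha_{\bm{\omega}}$ have equal stretch factors, so it suffices to evaluate $\lambda(\alpha_{(1,-1,1)})$. Under the bijection $\Theta \colon \Psi_3 \to \Omega_4^+$ of Lemma~\ref{lem_bijection} the preimage of $(1,-1,1)$ is $\bm{1}_3 = (1,1,1)$, whence $\alpha_{(1,-1,1)} = \beta_{\bm{1}_3}$ by Lemma~\ref{lem_alpha-beta}. Example~\ref{ex_111}(1) gives $\lambda(\beta_{\bm{1}_3}) = 2+\sqrt{3}$, and the power law (\ref{equation_stretchfactor-power}) finishes the count:
\[
\lambda\big(Z_{(1,-1,1)}\big) = \lambda\big((\alpha_{-\bm{\omega}})^4\big) = \big(\lambda(\alpha_{-\bm{\omega}})\big)^4 = (2+\sqrt{3})^4.
\]

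I expect the only genuine obstacle to be the braid-type identification in the first paragraph; once the super-eight is matched to $\bm{z}_{(1,-1,1)}(t)$, every remaining step is a direct appeal to results already established. The delicate content lies in justifying that the three-loop symmetric trajectory of the super-eight carries the same crossing data, and hence the same conjugacy class in $B_4/Z(B_4)$, as Yu's solution — a point that, as in the figure-eight case, rests on the shape analysis of Section~\ref{section_Yu} and the symmetry discussion of Remark~\ref{rem_figure-super-8}.
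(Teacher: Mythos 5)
Your proposal is correct and follows essentially the same route as the paper's own proof: identification of the super-eight with $\bm{z}_{(1,-1,1)}(t)$ via Remark~\ref{rem_figure-super-8}, the representative $(\alpha_{-\bm{\omega}})^4$ from Theorem~\ref{thm_braid-type-omega}, the pseudo-Anosov property from Theorem~\ref{thm_main}, the equality $\lambda(\alpha_{-\bm{\omega}})=\lambda(\alpha_{\bm{\omega}})=\lambda(\beta_{(1,1,1)})=2+\sqrt{3}$ via Lemma~\ref{lem_stretchfactor-equivalent}, Lemma~\ref{lem_alpha-beta} and Example~\ref{ex_111}(1), and finally the power law (\ref{equation_stretchfactor-power}). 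The only difference is cosmetic: you make the appeal to the bijection $\Theta$ explicit where the paper states $\alpha_{\bm{\omega}}=\beta_{(1,1,1)}$ directly.
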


\begin{proof}
We take $\bm{\omega}= (1, -1, 1) \in \Omega_4$. 
Figure~\ref{fig_super-eight} illustrates the braid 
$\overline{b_{[\epsilon, 1+ \epsilon]}}$ 
corresponding to the simple choreography $\bm{z}_{\bm{\omega}}(t)$. 
The (primitive) braid type of the super-eight solution 
 for the planar $4$-body problem 
is the same as that of $\bm{z}_{\bm{\omega}}(t)$. 
See Remark~\ref{rem_figure-super-8}(2)(3). 
Thus the braid type of the super-eight is pseudo-Anosov by 
Theorem~\ref{thm_main}. 
The primitive braid type of the super-eight is given by 
$\alpha_{- \bm{\omega}}= \sigma_1^{-1} \sigma_2 \sigma_3^{-1}$. 
By Lemma~\ref{lem_stretchfactor-equivalent}, 
braids $\alpha_{- \bm{\omega}}$ and 
$\alpha_{\bm{\omega}}= \beta_{(1,1,1)}$ have the same stretch factor 
that is equal to $2+ \sqrt{3}$ 
(see Example~\ref{ex_111}(1)).  
Therefore, by (\ref{equation_stretchfactor-power}), 
the braid type of the super-eight  has the stretch factor $(2+ \sqrt{3})^4$. 
\end{proof}

\begin{figure}[htbp]
\begin{center}
\includegraphics[width=4.5in]{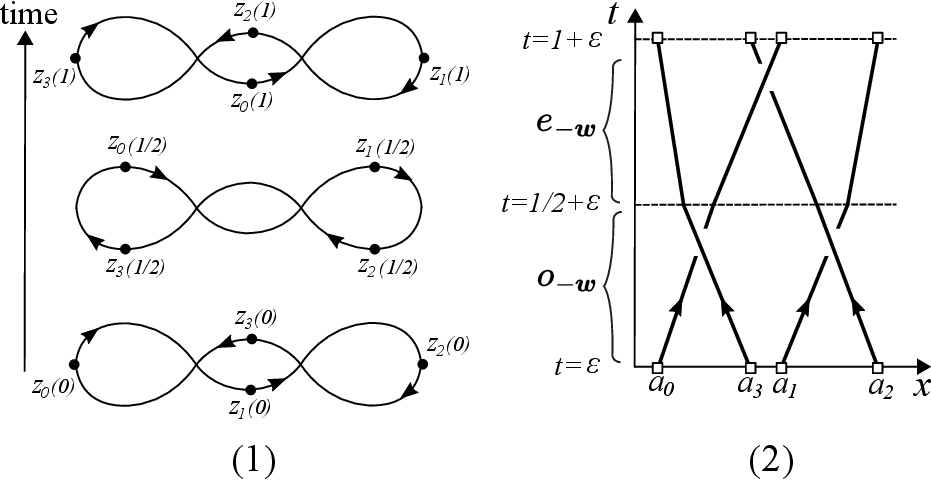}
\caption{
Case $N=4$, $\bm{\omega}= (1, -1,1)$. 
(1) From the bottom to the top: 
$\bm{z}_{\bm{\omega}}(0)$,  $\bm{z}_{\bm{\omega}}(\frac{1}{2})$ and 
$\bm{z}_{\bm{\omega}}(1)= \bm{z}_{\bm{\omega}}(0)$.
(2) 
$\overline{b_{[\epsilon, 1+ \epsilon]}}= e_{- \bm{\omega}} \cdot 
o_{- \bm{\omega}}= \sigma_2 \cdot \sigma_1^{-1} \sigma_3^{-1}$ in this case.}
\label{fig_super-eight}
\end{center}
\end{figure}

\begin{proof}[Proof of Corollary~\ref{cor_minimal-4braid}]
We consider the case $N=4$, $\bm{\omega} =(1, -1, -1)$. 
Figure~\ref{fig_minimal4-braid} illustrates the braid 
$\overline{b_{[\epsilon, 1+ \epsilon]}}$ 
corresponding to  $\bm{z}_{\bm{\omega}}(t)$. 
See also Figure~\ref{fig_orbit_minimal4-braid}. 
The primitive braid type of the simple choreography $\bm{z}_{\bm{\omega}}$ 
is given by 
$\alpha_{- \bm{\omega}} = \sigma_1^{-1} \sigma_2 \sigma_3$. 
If we take $- \bm{\omega} = (-1,1,1) \in \Omega_4$, 
the braid $\alpha_{- (- \bm{\omega})} = \alpha_{\bm{\omega}} = \sigma_1 \sigma_2^{-1} \sigma_3^{-1}$ represents the primitive braid type of 
the simple choreography $\bm{z}_{-\bm{\omega}}(t)$ corresponding to 
$-\bm{\omega} $. 
Since by Lemma~\ref{lem_stretchfactor-equivalent}, 
braids $\alpha_{-\bm{\omega}}$ and $\alpha_{\bm{\omega}} = \beta_{(1,2)}$ 
have the same stretch factor,  
Theorem~\ref{thm_recursive-example} tells us that 
$\lambda(\alpha_{-\bm{\omega}}) 
= \lambda(\beta_{(1,2)})\approx 2.2966$ 
is the largest real root of 
$$F_{(1,2)}(t)= (t+1)(t^4-2t^3-2t+1),$$ 
that is the largest real root of the degree $4$ polynomial 
 $t^4-2t^3-2t+1$. 
Thus, the simple choreography $\bm{z}_{-\bm{\omega}}(t)$  
has the desired properties. 
\end{proof}

\begin{figure}[htbp]
\begin{center}
\includegraphics[width=4.5in]{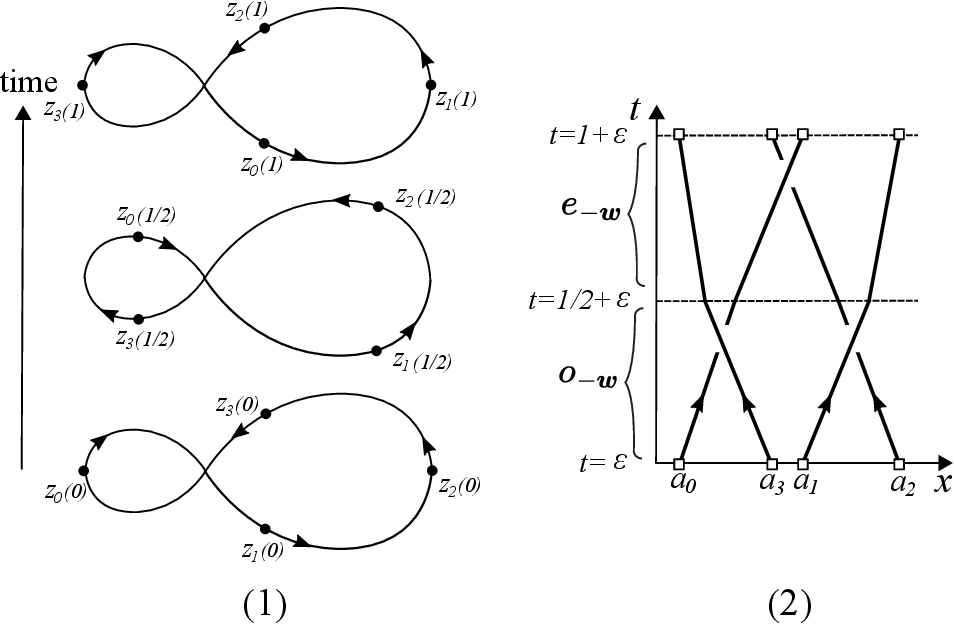}
\caption{
Case $N=4$, $\bm{\omega}= (1, -1,-1)$. 
(1) From the bottom to the top: 
$\bm{z}_{\bm{\omega}}(0)$,  $\bm{z}_{\bm{\omega}}(\frac{1}{2})$ and 
$\bm{z}_{\bm{\omega}}(1)= \bm{z}_{\bm{\omega}}(0)$.
(2) $\overline{b_{[\epsilon, 1+ \epsilon]}}= e_{- \bm{\omega}} \cdot 
o_{- \bm{\omega}}= \sigma_2 \cdot \sigma_1^{-1} \sigma_3$ in this case.
}
\label{fig_minimal4-braid}
\end{center}
\end{figure}

\section{Conclusion}
\label{section_conclusion}

We notice that 
if $b \in B_N$ is a primitive braid of some simple choreography 
of  the planar $N$-body problem, 
then the permutation $\hat{s}(b) \in S_N $ is cyclic
(see (\ref{equation_symmetrygroup}) for the definition of the homomorphism 
$\hat{s}: B_N \to S_N$). 
The following question 
is a choreographic version of Question~\ref{question_Montgomery}. 

\begin{question}
\label{question_primitive-braid}
Let  $b $ be a braid with $N$ strands 
whose permutation $\hat{s}(b)$ is cyclic. 
Is the braid type $\langle b \rangle$ given by a primitive braid of a 
simple choreography of the planar Newtonian $N$-body problem? 
\end{question}

Theorem~\ref{thm_main} tells us that 
Question~\ref{question_primitive-braid} is true if 
a braid $b$ is of the form $b= \alpha_{\bm{\omega}}$ 
for $\bm{\omega} \in \Omega_N$. 
We are  far from solving Questions~\ref{question_primitive-braid} 
and \ref{question_Montgomery}, 
yet we present an interesting example in this work.
In fact, 
the simple choreography of the planar  $4$-body problem 
which satisfies the statement of  Corollary~\ref{cor_minimal-4braid} 
is intriguing  in the sense that 
the braid $\sigma_1 \sigma_2^{-1} \sigma_3^{-1}$  reaches 
the minimal stretch factor among all pseudo-Anosov braids with $4$ strands 
\cite{SongKoLos02}. 

Table~\ref{table_max-mini-stretch-factor} in 
Section~\ref{subsection_strech-factors} 
shows that 
$\displaystyle\min_{\substack{ \beta \in Y_5 \setminus \{\beta_{(4)}\}}} \lambda(\beta) = \lambda(\beta_{(2,2)}) \approx 2.01536$. 
We note that 
there exists a pseudo-Anosov $5$-braid 
whose stretch factor is smaller than that of the $5$-braid $\beta_{(2,2)}$. 
Ham-Song \cite{HamSong16} proved that 
$\sigma_1 \sigma_2 \sigma_3 \sigma_4 \sigma_1 \sigma_2 $ 
is a pseudo-Anosov braid which realizes the minimal stretch factor 
$\delta_5 \approx 1.7220$ 
among all pseudo-Anosov $5$-braids, and $\delta_5$ is the largest real root of 
$t^4-t^3-t^2-t+1$. 

We finally ask the following question. 

\begin{question}
Does there exist a simple choreography of the planar Newtonian 
$5$-body problem 
whose primtive braid type is given by a pseudo-Anosov $5$-braid 
with the minimal stretch factor $\delta_5 \approx 1.7220$?  
\end{question}

\bibliographystyle{alpha}
\bibliography{Braids}

\end{document}